\theoremstyle{plain}
\newtheorem{theorem}{Theorem}[section]
\newtheorem{proposition}[theorem]{Proposition}
\newtheorem{lemma}[theorem]{Lemma}
\theoremstyle{definition}
\newtheorem{example}[theorem]{Example}
\theoremstyle{remark}
\newtheorem{remark}[theorem]{Remark}
\DeclareMathOperator{\Id}{Id}
\DeclareMathOperator{\id}{id}
\DeclareMathOperator{\sym}{sym}
\DeclareMathOperator{\skewo}{skew}
\DeclareMathOperator{\dist}{dist}
\DeclareMathOperator{\SO}{SO}
\DeclareMathOperator{\Oo}{O}
\DeclareMathOperator*{\argmin}{arg\,min}
\def\Xint#1{\mathchoice
   {\XXint\displaystyle\textstyle{#1}}%
   {\XXint\textstyle\scriptstyle{#1}}%
   {\XXint\scriptstyle\scriptscriptstyle{#1}}%
   {\XXint\scriptscriptstyle\scriptscriptstyle{#1}}%
   \!\int}
\def\XXint#1#2#3{{\setbox0=\hbox{$#1{#2#3}{\int}$}
     \vcenter{\hbox{$#2#3$}}\kern-.5\wd0}}
\def\dashint{\Xint-}
\newcommand{\Z}{\mathbb{Z}}
\newcommand{\N}{\mathbb{N}}
\newcommand{\R}{\mathbb{R}}
\newcommand{\wto}{\rightharpoonup}
\newcommand{\eps}{\varepsilon}
\newcommand{\dtilde}[1]{\tilde{\raisebox{0pt}[0.85\height]{$\tilde{#1}$}}}
\newcommand{\dbar}[1]{\bar{\raisebox{0pt}[0.95\height]{$\bar{#1}$}}}
\newcommand{\e}{{\mathrm e}}
\renewcommand*\env@matrix[1][*\c@MaxMatrixCols c]{%
  \hskip -\arraycolsep
  \let\@ifnextchar\new@ifnextchar
  \array{#1}}
\begin{document}
\begin{center}
\begin{Large}
An atomistic derivation of von-Kármán plate theory
\end{Large}
\\[0.5cm]
\begin{large}
Julian Braun\footnote{University of Warwick, UK, {\tt j.braun@warwick.ac.uk}} and Bernd Schmidt\footnote{Universität Augsburg, Germany, {\tt bernd.schmidt@math.uni-augsburg.de}}
\end{large}
\\[0.5cm]
\today
\\[1cm]
\end{center}

\begin{abstract}
We derive \emph{von-Kármán plate theory} from three dimensional, purely atomistic models with classical particle interaction. This derivation is established as a $\Gamma$-limit when considering the limit where the interatomic distance $\eps$ as well as the thickness of the plate $h$ tend to zero. In particular, our analysis includes the \emph{ultrathin} case where $\eps \sim h$, leading to a new \emph{von-Kármán plate theory for finitely many layers}. 
\end{abstract}

\section{Introduction}

The aim of this work is to derive von-Kármán plate theory from nonlinear, three-di\-men\-sion\-al, atomistic models in a certain energy scaling as the interatomic distance $\eps$ and the thickness of the material $h$ both tend to zero.

The passage from atomistic interaction models to continuum mechanics (i.e., the limit $\eps \to 0$) has been an active area of research over the last years. In particular, this limit has been well studied for three-dimensional elasticity, cf., e.g.,\ \cite{BLL:02, alicandrocicalese, schmidtlinelast, braunschmidt13, emingstatic, ortnertheil13, BraunSchmidt16, Braun17}. At the same time, there have emerged rigorous results deriving effective thin film theories from three-dimensional nonlinear (continuum) elasticity in the limit of vanishing aspect ratio (i.e., the limit $h \to 0$), cf.\ \cite{LeDretRaoult:95,FJM:02,FJM:06,ContiMaggi2008,olbermannruna17}. First efforts to combine these passages and investigate the simultaneous limits $\eps\to0$ and $h\to0$ were made in \cite{FJ:00,schmidt-membrane,Sch:08b} for membranes (whose energy scales as the thickness $h$) and in \cite{Sch:06} for Kirchhoff plates (whose energy scales like $h^3$). In particular, this left open the derivation of the von-Kármán plate theory, which describes plates subject to small deflections with energy scale $h^5$ and might even be the most widely used model for thin structures in engineering. Though we do want to mention \cite{bartels17} for a result regarding discrete von-Kármán plate theory that is motivated numerically and not physically.

Our first aim is to close this gap. For thin films consisting of many atomic layers one expects the scales $\eps$ and $h$ to separate so that the limit $\eps,h\to0$ along $\frac{h}{\eps} \to \infty$ is equivalent to first passing to the continuum limit $\eps\to0$ and reducing the dimension from 3d to 2d in the limit $h\to0$. We will show in Theorem~\ref{thm:Gammalimit}a) that this is indeed true.

By way of contrast, for {\em ultrathin} films consisting of only a few atomic layers, more pecisely, if $\eps,h\to0$ such that the number of layers $\nu = \frac{h}{\eps}+1$ remains bounded, the classical von-Kármán theory turns out to capture the energy only to leading order in $\frac{1}{\nu}$. The next aim is thus to derive a new finite layer version of the von-Kármán plate theory featuring additional explicit correction terms, see Theorem~\ref{thm:Gammalimit}b). In view of the fabrication of extremely thin layers, such an analysis might be of some interest also in engineering applications. An interesting question related to such applications, which we do not address here, would be to extend our analysis to heterogeneous structures as in \cite{DeBenitoSchmidt:19a,DeBenitoSchmidt:19b}. 

Our third aim concerns a more fundamental modelling point of view which is based on the very low energy of the von-Kármán scaling: If the the plate is not too thick (more precisely, if $\frac{h^5}{\eps^3} \to 0$), we strengthen the previous results to allow for a much wider range of interaction models, that allow for much more physically realistic atomic interactions (compared to \cite{FJM:02,FJM:06}) as they can now be invariant under reflections and no longer need to satisfy growth assumptions at infinity, see Theorems~\ref{thm:Gammalimit2} and \ref{thm:Gammalimit3}. In particular, this includes Lennard-Jones-type interaction models, see Example~\ref{ex:mass-spring-LJ}. 

Finally, on a technical note, the proof of the our main result set forth in Section~\ref{section:Proofs} elucidates the appearance and structure of the correction terms in the ultrathin film regime. Both in \cite{Sch:06} and the present contribution, at the core of the proof lies the identification of the limiting strain, which in the discrete setting can be seen as a $3 \times 8$ matrix rather than a $3 \times 3$ matrix. In \cite{Sch:06} this has been accomplished with the help of adhoc techniques that allowed to compare adjacent lattice unit cells. Now, for the proof of Proposition~\ref{prop:limiting-strain} we introduce a more general and flexible scheme to capture discreteness effects by splitting the deformation of a typical lattice unit cell into affine and non-affine contributions and passing to weak limits of taylor-made finite difference operators. While for $h \gg \eps$ these operators will tend to a differential operator in the limit, if $h \sim \eps$, finite differences in the $x_3$ direction will not become infinitestimal and lead to lower order corrections in $\frac{1}{\nu}$. 
 
This work is organized as follows: In Section \ref{section:Models-and-Results}, we first describe the atomistic interaction model and then present our results. Our main theorem, Theorem \ref{thm:Gammalimit}, details the $\Gamma$-limits for both the \emph{thin} ($\nu \to \infty$) and \emph{ultrathin} ($\nu$ bounded) case. Theorems \ref{thm:Gammalimit2} and \ref{thm:Gammalimit3} then extend these results to more general and more physically realistic models. Section \ref{sec:preparations} contains a few technical tools to circumvent rigidity problems at the boundary and to compare continuous with discrete quantities. Using these tools we then prove our results in Section \ref{section:Proofs}.

%-------------------------------------------------------------------

\section{Models and Results}\label{section:Models-and-Results}

%-------------------------------------------------------------------

\subsection{Atomic Model}

Let $S \subset \R^2 = \R^2 \times \{ 0 \} \subset \R^3$ be an open, bounded, connected, nonempty set with Lipschitz boundary. To keep the notation simple we will only consider the cubic lattice. Let $\eps>0$ be a small parameter describing the interatomic distance, then we consider the lattice $\eps \Z^3$. We denote the number of atom layers in the film by $\nu \in \N$, $\nu \geq 2$ and the thickness of the film by $h=(\nu-1) \eps$. In the following let us consider sequences $h_n, \eps_n, \nu_n$, $n \in \N$, such that $\eps_n, h_n \to 0$. The macroscopic reference region is $\Omega_n = S \times (0,h_n)$ and so the (reference) atoms of the film are $\Lambda_n = \overline{\Omega}_n \cap \eps_n \Z^3$. We will assume that the energy can be written as a sum of cell energies. 

More precisely, as in \cite{Sch:06} we let $z^1, \dots, z^8$ be the corners of the unit cube centered at $0$ and write
\[Z=(z^1, \dots, z^8)= \frac{1}{2}\begin{pmatrix*}[r]
-1&1&1&-1&-1&1&1&-1\\-1&-1&1&1&-1&-1&\phantom{-}1&1\\-1&-1&-1&-1&1&1&1&1
\end{pmatrix*}.\]
Furthermore, by $\Lambda_n'= \big( \bigcup_{x \in \Lambda_n} (x + \eps_n \{ z^1, \ldots, z^8 \}) \big) \cap \big( \R^2 \times (0, h_n) \big)$ we denote the set of midpoints of lattice cells $x + [-\eps_n /2,\eps_n /2]^3$ contained in $\R^2 \times [0, h_n]$ for which at least one corner lies in $\Lambda_n$.  Additionally, let $\vec{w}(x) = \frac{1}{\eps_n}(w(x+\eps_n z^1), \dots, w(x+\eps_n z^8)) \in \R^{3 \times 8}$. Then, we assume that the atomic interaction energy for a deformation map $w \colon \Lambda_n \to \R^3$ can be written as
\begin{equation} \label{energyinteratom}
E_{\rm atom}(w) = \sum_{x \in \Lambda_n'} W(x,\vec{w}(x)),
\end{equation}
where $W(x,\cdot) : \R^{3\times 8} \to [0,\infty)$ only depends on those $\vec{w}_i$ with $x+\eps_n z^i \in \Lambda_n$, which makes \eqref{energyinteratom} meaningful even though $w$ is only defined on $\Lambda_n$.

As a full interaction model with long-range interaction would be significantly more complicated in terms of notation and would result in a much more complicated limit for finitely many layers, we restrict ourselves to these cell energies.

In the following we will sometimes discuss the upper and lower part of a cell separately. We write $A = (A^{(1)}, A^{(2)})$ with $A^{(1)}, A^{(2)} \in \R^{3 \times 4}$ for a $3 \times 8$ matrix $A$. 

If the full cell is occupied by atoms, i.e., $x+ \eps_n z^i \in \Lambda_n$ for all $i$, then we assume that $W$ is is given by a homogeneous cell energy $W_{\rm cell}:\R^{3\times 8} \to [0,\infty)$ with the addition of a homogeneous surface energy $W_{\rm surf}:\R^{3\times 4} \to [0,\infty)$ at the top and bottom. That means,
\[W(x,\vec{w}) = \begin{cases} 
        W_{\rm cell}(\vec{w}) & \text{if } x_3 \in (\eps_n/2, h_n - \eps_n/2), \\ 
        W_{\rm cell}(\vec{w})+ W_{\rm surf}(\vec{w}^{(2)}) & \text{if } \nu_n \geq 3  \text{ and } x_3 = h_n-\eps_n/2, \\ 
        W_{\rm cell}(\vec{w}) + W_{\rm surf}(\vec{w}^{(1)}) & \text{if } \nu_n \geq 3 \text{ and } x_3 = \eps_n/2, \\   
        W_{\rm cell}(\vec{w}) + \sum_{i=1}^2 W_{\rm surf}( \vec{w}^{(i)}) & \text{if }  \nu_n = 2,  \text{ and } x_3 = h_n/2.
     \end{cases}\]

\begin{example}\label{ex:mass-spring}
A basic example is given by a mass-spring model with nearest and next to nearest neighbor interaction: 
\begin{align*} 
  E_{\rm atom}(w) 
  &= \frac{\alpha}{4} \sum_{x,x' \in \Lambda_n \atop |x - x'| = \eps_n} \Big( \frac{|w(x) - w(x')|}{\eps_n} - 1 \Big)^2  + \frac{\beta}{4} \sum_{x,x' \in \Lambda_n \atop |x - x'| = \sqrt{2} \eps_n} \Big( \frac{|w(x) - w(x')|}{\eps_n} - \sqrt{2} \Big)^2. 
\end{align*} 
$E_{\rm atom}$ is of the form \eqref{energyinteratom} if we set 
\begin{align*} 
  W_{\rm cell}(\vec{w})
  &= \frac{\alpha}{16} \sum_{1 \le i,j \le 8 \atop |z^i - z^j| = 1} \big( |w_i - w_j| - 1 \big)^2 
  + \frac{\beta}{8} \sum_{1 \le i,j \le 8 \atop |z^i - z^j| = \sqrt{2}} \big( |w_i - w_j| - \sqrt{2} \big)^2  
\end{align*} 
and 
\begin{align*} 
  W_{\rm surf}(w_1,w_2,w_3,w_4)
  &= \frac{\alpha}{8} \sum_{1 \le i,j \le 4 \atop |z^i - z^j| = 1} \big( |w_i - w_j| - 1 \big)^2 \\ 
  &\qquad\qquad + \frac{\beta}{8} \sum_{1 \le i,j \le 4 \atop |z^i - z^j| = \sqrt{2}} \big( |w_i - w_j| - \sqrt{2} \big)^2.  
\end{align*}
\end{example}

We will also allow for energy contributions from body forces $f_n \colon \Lambda_n \to \R^3$ given by
\[E_{\rm body}(w)=\sum_{x \in \Lambda_n} w(x) \cdot f_n(x).\]
We will assume that the $f_n$ do not depend on $x_3$, that $f_n(x)=0$ for $x$ in an atomistic neighborhood of the lateral boundary, see \eqref{eq:force-bdry0}, and that there is no net force or first moment,
\begin{align}\label{eq:force-bed}
  \sum_{x \in \Lambda_n} f_n(x) =0, \quad \sum_{x \in \Lambda_n} f_n(x) \otimes x'=0, 
\end{align}
to not give a preference to any specific rigid motion. At last, we assume that after extension to functions $\bar{f}_n$ which are piecewise constant on each $x + (-\frac{\eps_n}{2}, \frac{\eps_n}{2})^2$, $x \in \eps_n \Z^2$,  $h_n^{-3} \bar{f}_n \to f$ in $L^2(S)$.

Overall, the energy is given as the sum
\begin{equation}\label{eq:energy}
E_n(w) = \frac{\eps_n^3}{h_n} \big(E_{\rm atom}(w) + E_{\rm body}(w) \big).
\end{equation}
Due to the factor $\frac{\eps_n^3}{h_n}$ this behaves like an energy per unit (undeformed) surface area.

Let us make some additional assumptions on the interaction energy. We assume that $W_{\rm cell}$, $W_{\rm surf}$, and all $W(x, \cdot)$ are invariant under translations and rotations, i.e., they satisfy 
$$ W(A) 
   = W(A + (c,\dots,c)) \mbox{ and } 
   W(RA)
   = W(A) $$ 
for any $A \in \R^{3 \times 8}$ or $A \in \R^{3 \times 4}$, respectively, and any $c \in \R^3$ and $R \in \SO(3)$. Furthermore, we assume that $W_{\rm cell}(Z)=W(x, Z) =0$, which in particular implies $W_{\rm surf}(Z^{(1)})=W_{\rm surf}(Z^{(2)})=0$, where $(Z^{(1)}, Z^{(2)}) = Z$. At last we assume that $W$ and $W_{\rm cell}$ are $C^2$ in a neighborhood of $Z$, while $W_{\rm surf}$ is $C^2$ in neighborhood of $Z^{(1)}$.

Since our model is translationally invariant, it is then equivalent to consider the discrete gradient
\[ \bar{\nabla} w(x) = \frac{1}{\eps_n} \big(w(x+\eps_n z^1) - \langle w \rangle , \dots, w(x+\eps_n z^8) - \langle w \rangle\big) \]
with
\[\langle w \rangle = \frac{1}{8} \sum_{i=1}^8 w(x+\eps_n z^i) \]
instead of $\vec{w}(x)$ for any $x$ with $x+ \eps_n z^i \in \Lambda_n$ for all $i$. In particular, the discrete gradient satisfies
\[ \sum_{i=1}^8 (\bar{\nabla} w(x))_{\cdot i} =0.\]
The bulk term is also assumed to satisfy the following single well growth condition.
\begin{itemize}
\item[{\bf (G)}] Assume that there is a $c_0>0$ such that
\[W_{\rm cell}(A) \geq c_0 \dist^2(A, \SO(3)Z)\]
for all $A \in \R^{3 \times 8}$ with $\sum_{i=1}^8 A_{\cdot i} =0$.
\end{itemize}

%-------------------------------------------------------------------

\subsection{Rescaling and Convergence of Displacements}
It turns out to be convenient to rescale our reference sets to the fixed domain $\Omega = S \times (0,1)$. For $x \in \R^3$ let us always write $x=(x', x_3)^T$ with $x'\in \R^2$. We define $\tilde{\Lambda}_n = H_n^{-1} \Lambda_n$ and $\tilde{\Lambda}_n'= H_n^{-1} \Lambda_n'$ with the rescaling matrix
\[H_n = \begin{pmatrix}
1 & 0 & 0 \\ 0 & 1 & 0 \\ 0& 0& h_n\\
\end{pmatrix}.\]

A deformation $w \colon \Lambda_n \to \R^3$ can be identified with the rescaled deformation $y \colon \tilde{\Lambda}_n \to \R^3$ given by $y(x) = w(H_n x)$. We then write $E_n(y)$ for $E_n(w)$. The rescaled discrete gradient is then given by
\[(\bar{\nabla}_n y(x))_{\cdot i} := \frac{1}{\eps_n}(y(x' + \eps_n (z^i)', x_3 + \frac{\eps_n}{h_n} z^i_3)-\langle y \rangle) = \bar{\nabla}w (H_n x)\]
for $x \in \tilde{\Lambda}_n'$, where now
\[\langle y \rangle = \frac{1}{8} \sum_{i=1}^8 y(x'+\eps_n (z^i)',x_3 + \frac{\eps_n}{h_n} z^i_3 ).\]

For a differentiable $v \colon \Omega \to \R^k$ we analogously set $\nabla_n v := \nabla v H_n^{-1} = (\nabla'v, \frac{1}{h_n} \partial_3 v)$.

In Section \ref{sec:preparations} we will discuss a suitable interpolation scheme with additional modifications at $\partial S$ to arrive at a $\dtilde{y}_n \in W^{1,2}(\Omega; \R^3)$ corresponding to $y_n$. Furthermore, for sequences in the von-Kármán energy scaling we will expect $y_n$ and $\dtilde{y}_n$ to be close to a rigid motion $x \mapsto R^*_n(x+c_n)$ for some $R^*_n, c_n$ and will therefore be interested in the normalized deformation
\begin{align}\label{eq:yn-tilde-def}
  \tilde{y}_n := {R^*_n}^T\dtilde{y}_n - c_n, 
\end{align}
which would then be close to the identity. The von-Kármán displacements in the limit will then be found as the limit objects of
\begin{align}
u_n(x') &:= \frac{1}{h_n^2} \int_0^1 (\tilde{y}_n)'-x'\,dx_3, \text{ and} \label{eq:un-def} \\
v_n(x') &:= \frac{1}{h_n} \int_0^1 (\tilde{y}_n)_3\,dx_3. \label{eq:vn-def} 
\end{align}

%-------------------------------------------------------------------

\subsection{The $\Gamma$-convergence result}

To describe the limit energy, let $Q_{\rm cell}(A) = D^2W_{\rm cell}(Z)[A,A]$ for $A\in\R^{3 \times 8}$ and $Q_{\rm surf}(A) = D^2W_{\rm surf}(Z^{(1)})[A,A]$ for $A \in \R^{3 \times 4}$. By frame indifference, 
\begin{align}\label{eq:Q-invariance}
  Q_{\rm cell}(A Z + c \otimes (1, \ldots, 1)) 
  = Q_{\rm surf}(A Z^{(1)} + c \otimes (1,1,1,1)) 
  = 0 
\end{align}
for all $c \in \R^3$ and all skew symmetric $A \in \R^{3 \times 3}$.

We introduce a relaxed quadratic form on $\R^{3 \times 8}$ by 
\begin{align*} 
  Q_{\rm cell}^{\rm rel}(A) 
  &= \min_{b \in \R^3} Q_{\rm cell} \big( a_1-\tfrac{b}{2}, \ldots, a_4-\tfrac{b}{2}, a_5+\tfrac{b}{2}, \ldots, a_8+\tfrac{b}{2} \big) \\ 
  &= \min_{b \in \R^3} Q_{\rm cell}(A + (b \otimes e_3) Z) 
   = \min_{b \in \R^3} Q_{\rm cell}(A + {\rm sym} (b \otimes e_3) Z). 
\end{align*} 
By Assumption {\bf (G)} $Q_{\rm cell}$ is positive definite on $(\R^3 \otimes e_3) Z$. Therefore, for each $A \in \R^{3 \times 8}$ there exists a (unique) $b = b(A)$ such that 
\begin{align}\label{eq:bmin-Q3} 
  Q_{\rm cell}^{\rm rel}(A) 
  &= Q_{\rm cell}(A + (b(A) \otimes e_3) Z) 
   = Q_{\rm cell}(A + {\rm sym} (b(A) \otimes e_3) Z)  
\end{align} 
and the mapping $A \mapsto b(A)$ is linear. (If $((v_i \otimes e_3) Z)_{i=1,2,3}$ is a $Q_{\rm cell}$-orthonormal basis of $(\R^3 \otimes e_3) Z$, then $b(A) = - \sum_{i=1}^3 Q_{\rm cell}[(v_i \otimes e_3) Z, A]$, where $Q_{\rm cell}[\cdot, \cdot]$ denotes the symmetric bilinear form corresponding to the quadratic form $Q_{\rm cell}(\cdot)$.)

At last, let us write
\[ Q_2(A) 
   = Q_{\rm cell}^{\rm rel} \bigg( \begin{pmatrix} A & 0 \\ 0 & 0 \end{pmatrix} Z \bigg), \qquad 
   Q_{2,{\rm surf}}(A) 
   = Q_{\rm surf} \bigg( \begin{pmatrix} A & 0 \\ 0 & 0 \end{pmatrix} Z \bigg) \]
for any $A \in \R^{2 \times 2}$.

We are now in place to state our main theorem in its first version. 

\begin{theorem} \label{thm:Gammalimit}

{\rm a)} If $\nu_n \to \infty$, then $\frac{1}{h_n^4} E_n\stackrel{\Gamma}{\longrightarrow} E_{\rm vK}$ with
\begin{align*}
  E_{\rm vK}(u,v,R^*)
  &:= \int_S \tfrac{1}{2} Q_2( G_1(x')) + \tfrac{1}{24} Q_2(G_2(x'))  + f(x') \cdot v(x') R^* e_3 \, dx',
\end{align*}
where $G_1(x') = {\rm sym} \nabla' u(x') + \tfrac{1}{2} \nabla' v(x') \otimes \nabla' v(x')$ and $G_2(x') = - (\nabla')^2 v(x')$.

More precisely, for every sequence $y_n$ with bounded energy $\frac{1}{h_n^4} E_n(y_n) \leq C$, there exists a subsequence (not relabeled), a choice of $R^*_n \in \SO(3), c_n \in \R^3$, and maps $u\in W^{1,2}(S;\R^2)$, $v \in W^{2,2}(S)$ such that $(u_n, v_n)$ given by \eqref{eq:un-def}, \eqref{eq:vn-def} and \eqref{eq:yn-tilde-def} satisfy $u_n \wto u$ in $W^{1,2}_{\rm loc}(S;\R^2)$, $v_n \to v$ in $W^{1,2}_{\rm loc}(S)$, $R^*_n \to R^*$, and
\[\liminf_{n \to \infty} \frac{1}{h_n^4} E_n(y_n) \geq E_{\rm vK}(u,v, R^*).\]
On the other hand, this lower bound is sharp, as for every $u\in W^{1,2}(S;\R^2)$, $v \in W^{2,2}(S)$, and $R^* \in \SO(3)$ there is a sequence $y_n$ such that $u_n \wto u$ in $W^{1,2}_{\rm loc}(S;\R^2)$, $v_n \to v$ in $W^{1,2}_{\rm loc}(S)$ (where we can take $R^*_n = R^*$, $c_n=0$ without loss of generality) and
\[\lim_{n \to \infty} \frac{1}{h_n^4} E_n(y_n) = E_{\rm vK}(u,v,R^*).\]
\medskip

\noindent {\rm b)} If $\nu_n \equiv \nu \in \N$, then $\frac{1}{h_n^4}  E_n\stackrel{\Gamma}{\longrightarrow} E^{(\nu)}_{\rm vK}$, to be understood in exactly the same way as in a), where
\begin{align*}
  E^{(\nu)}_{\rm vK}(u,v, R^*)
  &= \int_S \tfrac{1}{2} Q_{\rm cell}^{\rm rel} 
       \bigg( \begin{pmatrix} \sym G_1(x') & 0 \\ 0 & 0 \end{pmatrix} Z 
       + \tfrac{1}{2(\nu-1)} G_3(x') \bigg) \\ 
  &~~\qquad + \tfrac{\nu(\nu-2)}{24(\nu-1)^2} Q_{\rm cell}^{\rm rel} 
       \bigg( \begin{pmatrix} G_2(x') & 0 \\ 0 & 0 \end{pmatrix} Z\bigg) \\ 
  &~~\qquad + \tfrac{1}{\nu-1} Q_{\rm surf} \bigg( \begin{pmatrix} \sym G_1(x') & 0 \\ 0 & 0 \end{pmatrix} Z^{(1)} 
          + \frac{\partial_{12} v(x')}{4(\nu-1)} M^{(1)}  \bigg) \\ 
  &~~\qquad + \tfrac{1}{4(\nu-1)} Q_{\rm surf} \bigg( \begin{pmatrix} G_2(x) & 0 \\ 0 & 0 \end{pmatrix} Z^{(1)}\bigg) \\ 
  &~~\qquad   + \tfrac{\nu}{\nu-1} f(x') \cdot v(x') R^* e_3 \, dx'. 
\end{align*}
Here,
\begin{align} 
  G_3(x') 
  &= \begin{pmatrix} G_2(x') & 0 \\ 0 & 0 \end{pmatrix} Z_- + \partial_{12}v(x')  M , \label{eq:G3def} \\ 
  M 
  &= (M^{(1)}, M^{(2)}) 
   = \tfrac{1}{2} e_3 \otimes (+1, -1, +1, -1, +1, -1, +1, -1), \label{eq:M-def} \\ 
  Z_- 
  &= (-Z^{(1)},Z^{(2)}) 
   = (-z^1,-z^2,-z^3,-z^4,+z^5,+z^6,+z^7,+z^8). \label{eq:Zminus-def}
\end{align}
\end{theorem}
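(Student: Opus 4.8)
The plan is to follow the now-standard two-step $\Gamma$-convergence scheme (compactness plus lower bound, then recovery sequence), but with all the discrete-to-continuum bookkeeping organized through the rescaled discrete gradient $\bar\nabla_n y$ and the interpolation machinery of Section~\ref{sec:preparations}. First I would establish \emph{compactness}: given $\frac{1}{h_n^4}E_n(y_n)\le C$, use growth condition (G) to get $L^2$-control of $\dist(\bar\nabla_n y_n,\SO(3)Z)$, pass to the $W^{1,2}(\Omega;\R^3)$ interpolant $\dtilde y_n$, and invoke the geometric rigidity estimate (in the boundary-modified form prepared in Section~\ref{sec:preparations}) to extract $R^*_n\in\SO(3)$, $c_n\in\R^3$ so that the normalized deformations $\tilde y_n={R^*_n}^T\dtilde y_n-c_n$ are close to the identity in the von-Kármán sense; this yields the scaling $\|\nabla_n\tilde y_n-\Id\|_{L^2}^2\lesssim h_n^4$ and, by the usual argument (as in \cite{FJM:06}), the limits $u\in W^{1,2}(S;\R^2)$, $v\in W^{2,2}(S)$ with $u_n\wto u$, $v_n\to v$ locally, and $R^*_n\to R^*$. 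The nonlinear strain $\frac{1}{h_n^2}((\nabla_n\tilde y_n)^T\nabla_n\tilde y_n-\Id)$ should then be shown to converge weakly to $\mathrm{diag}(2G_1+x_3\cdot(\text{something}),\ast)$, but the honest statement at the \emph{discrete} level is what Proposition~\ref{prop:limiting-strain} provides — and this is the crux.

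The main obstacle, and the heart of the proof, is the identification of the limiting discrete strain carried out in Proposition~\ref{prop:limiting-strain}: one must understand the weak limit of the rescaled quantity $\frac{1}{h_n^2}\,\mathrm{sym}$ (of the linearization of $\bar\nabla_n y_n$ around $\SO(3)Z$), viewed as a $3\times 8$ matrix field on $\Omega$, not merely a $3\times 3$ one. Here I would split the deformation of a typical unit cell into an affine part — governed by $\nabla_n\tilde y_n$ and producing the familiar $G_1$ (membrane/stretching) and $x_3 G_2$ (bending) contributions after integrating the quadratic forms over $x_3\in(0,1)$ — and a non-affine (cell-internal) part, captured by taylor-made finite-difference operators in the three coordinate directions applied to $\tilde y_n$. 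In the regime $\nu_n\to\infty$ the $x_3$-finite differences have step $\eps_n/h_n\to 0$, so these operators converge to genuine partial derivatives and the internal relaxation is exactly the minimization over $b$ defining $Q_{\rm cell}^{\rm rel}$, giving part a). When $\nu_n\equiv\nu$ is fixed, the $x_3$-differences have fixed step $1/(\nu-1)$ and do \emph{not} become infinitesimal; the discrete $x_3$-profile of $\tilde y_n$ must be resolved layer-by-layer, its second differences producing the curvature term with the arithmetic prefactor $\frac{\nu(\nu-2)}{24(\nu-1)^2}$ (a discrete analogue of $\int_0^1(x_3-\frac12)^2\,dx_3=\frac1{12}$ evaluated on the $\nu$ layers at heights $\frac{k}{\nu-1}$) and the mixed term $\partial_{12}v\,M/(4(\nu-1))$ arising from the interplay of in-plane shear with the top/bottom layer positions; this is the origin of the matrices $Z_-$ and $M$ and of $G_3$ in \eqref{eq:G3def}–\eqref{eq:Zminus-def}. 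I expect the bulk of the work to be a careful weak-convergence argument for these finite-difference operators together with the algebra showing that the relaxed form, after the $x_3$-sum, collapses to the stated $Q_{\rm cell}^{\rm rel}$, $Q_{\rm surf}$ expressions.

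Granting Proposition~\ref{prop:limiting-strain}, the \emph{lower bound} follows by a standard lower-semicontinuity argument: Taylor-expand $W_{\rm cell}$ and $W_{\rm surf}$ around $Z$, $Z^{(1)}$ (using $C^2$-regularity and the vanishing of $W$, $DW$ there by frame indifference \eqref{eq:Q-invariance}), bound the energy from below by the quadratic forms $Q_{\rm cell}$, $Q_{\rm surf}$ evaluated on the rescaled strains minus an error controlled by the $L^\infty$-truncation of the strain on the bulk (a good set of density $\to 1$), and apply weak lower semicontinuity of the convex functionals $A\mapsto\int Q_{\rm cell}(\cdots)$; the surface terms contribute on the two boundary layers $x_3\in\{0,1\}$ with the weights $\frac{1}{\nu-1}$, $\frac{1}{4(\nu-1)}$ coming from the prefactor $\eps_n^3/h_n$ and the single/double counting in the $\nu=2$ case, and the body-force term passes to the limit because $h_n^{-3}\bar f_n\to f$ in $L^2$ and $(\tilde y_n)_3/h_n\to v$, with the net-force/first-moment conditions \eqref{eq:force-bed} guaranteeing this is well-defined modulo rigid motions (whence the $R^*e_3$). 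For the \emph{upper bound}, given $(u,v,R^*)$ I would build $y_n$ explicitly: take the affine-on-each-cell map associated with the von-Kármán ansatz $x\mapsto R^*(x+(h_n^2 u - h_n x_3\nabla' v,\ h_n v)+\text{h.o.t.})$, smoothing $u,v$ first (by density of $C^\infty$) so the construction is licit, then add the optimal cell-internal corrections: the minimizing $b(A)$ from \eqref{eq:bmin-Q3} in the bulk and the corresponding $Z^{(1)}$-relaxers on the surface layers, plus — in case b) — the discrete $x_3$-profile correction that realizes $G_3$ and the $\partial_{12}v$-term exactly. A Taylor expansion with the $C^2$ remainder, uniform because the strains are uniformly small, then gives matching energy $\lim\frac1{h_n^4}E_n(y_n)=E_{\rm vK}(u,v,R^*)$ (resp.\ $E^{(\nu)}_{\rm vK}$), and one checks $u_n\wto u$, $v_n\to v$ directly from the construction with $R^*_n=R^*$, $c_n=0$. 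The two cases a) and b) run in parallel; the only genuine difference is whether the $x_3$-finite differences are sent to zero or kept discrete, which is precisely where the correction terms in b) are born.
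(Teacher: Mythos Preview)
Your proposal is correct and follows essentially the same route as the paper: compactness via (G) $+$ interpolation $+$ the FJM rigidity/compactness theorems, the lower bound via Proposition~\ref{prop:limiting-strain} $+$ Taylor expansion $+$ truncation $+$ weak lower semicontinuity, and the upper bound via the explicit von-K\'arm\'an ansatz with a corrector $d$ realizing the $b\otimes e_3$ relaxation.

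One clarification on the recovery sequence in case b) is worth making, since your description is slightly off there. The non-affine contribution $G_3$ --- including the $\partial_{12}v\,M$ piece --- is \emph{not} introduced by an added correction: it appears automatically from the second-order Taylor expansion of the basic ansatz \eqref{eq:recovery-ansatz}, via the $\eps_n h_n$-terms coming from $(\nabla')^2 v$ acting on the cube corners (this is the algebra leading to \eqref{eq:expBDy} and the subsequent identity for $\big((\nabla')^2v(\hat x')(a^i)'a^i_3,\ -\tfrac12(\nabla')^2v[(a^i)',(a^i)']\big)_i$). The only genuine freedom in the construction is the piecewise-affine-in-$x_3$ corrector $d$, and it is spent entirely on the $b\otimes e_3$ minimization inside $Q_{\rm cell}^{\rm rel}$. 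In particular there is no separate ``$Z^{(1)}$-relaxer'' on the surface layers: the surface atoms are already determined by the bulk deformation, which is why the limit functional contains $Q_{\rm surf}$ rather than a relaxed version of it.
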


In the following we use the notation $E_{\rm vK}(u,v)$, respectively, $E^{(\nu)}_{\rm vK}(u,v)$, for the functionals without the force term.

\begin{example}\label{ex:mass-spring-orient}
Theorem~\ref{thm:Gammalimit} applies to the interaction energy of Example~\ref{ex:mass-spring} if $W_{\rm cell}$ is augmented by an additional penalty term $+ \chi(\vec{w})$ which vanishes in a neighborhood of $\SO(3) Z$ but is $\ge c > 0$ in a neighborhood of ${\rm O}(3)Z \setminus \SO(3) Z$, so as to guarantee orientation preservation. 
\end{example}

\begin{remark}
\begin{enumerate}
\item The result in a) is precisely the functional one obtains by first applying the Cauchy-Born rule (in 3d) in order to pass from the discrete set-up to a continuum model and afterwards computing the (purely continuum) $\Gamma$-limit on the energy scale $h^5$ as $h\to0$ as in \cite{FJM:06}. Indeed, the Cauchy-Born rule associates the continuum energy density 
\[ W_{\rm CB}(A) = W_{\rm cell}(A Z) \] 
to the atomic interaction $W_{\rm cell}$, and so $Q_{\rm cell}(AZ) = D^2W_{\rm CB}(Z)[A,A] =: Q_{\rm CB}(A)$ for $A\in\R^{3 \times 3}$, in particular, 
\[ Q_2(A) 
   = \min_{b \in \R^3} Q_{\rm CB} \bigg( \begin{pmatrix} A & 0 \\ 0 & 0 \end{pmatrix} + b \otimes e_3  \bigg). \] 

\item In contrast, for finite $\nu$ non-affine lattice cell deformations of the form $A Z_- + a M$, $A \in \R^{3 \times 3}$, $a \in \R$ need to be taken into account. While $A Z_-$ is non-affine in the out-of-plane direction, $aM$ distorts a lattice unit cell in-plane in a non-affine way. 

\item  Suppose that in addition $W_{\rm cell}$ and $W_{\rm surf}$ satisfy the following antiplane symmetry condition: 
\begin{align*} 
  W_{\rm cell}(w_1, \ldots, w_8) 
  &= W_{\rm cell} (P w_5, \ldots, P w_8, P w_1, \ldots, P w_4), \\ 
  W_{\rm surf}(w_1, \ldots, w_4) 
  &= W_{\rm surf}(P w_1, \ldots, P w_4), 
\end{align*} 
where $P$ is the reflection $P(x',x_3) = (x',-x_3)$. This holds true, e.g., in mass-spring models such as in Example~\ref{ex:mass-spring}. As both terms in $G_3$ switch sign under this transformation, while the affine terms with $G_1$ and $G_2$ remain unchanged, one finds that the quadratic terms in $E^{(\nu)}_{\rm vK}$ decouple in this case and we have 
 \begin{align*}
  E^{(\nu)}_{\rm vK}(u,v)
  &= \int_S \tfrac{1}{2} Q_2 ( G_1(x') ) 
      + \tfrac{\nu(\nu-2)}{24(\nu-1)^2} Q_2 ( G_2(x') ) 
      + \tfrac{1}{8(\nu-1)^2} Q_{\rm cell}^{\rm rel} ( G_3(x') ) \\ 
  &~~\qquad + \tfrac{1}{\nu-1} Q_{2,{\rm surf}} ( G_1(x') ) 
     + \tfrac{(\partial_{12} v(x'))^2}{16(\nu-1)^3} Q_{\rm surf} ( M^{(1)} ) \\ 
  &~~\qquad + \tfrac{1}{4(\nu-1)} Q_{2,{\rm surf}} ( G_2(x) ) \, dx' \\ 
  &= E_{\rm vK}(u,v) + \int_S \tfrac{1}{\nu-1} 
     \Big[ Q_{2,{\rm surf}} ( G_1(x') ) + \tfrac{1}{4} Q_{2,{\rm surf}} ( G_2(x) )  \Big] \\ 
  &\qquad\qquad\qquad\qquad  + \tfrac{1}{8(\nu-1)^2} \Big[Q_{\rm cell}^{\rm rel} ( G_3(x') )  - \tfrac{1}{3} Q_2 ( G_2(x') ) \Big] \\ 
  &\qquad\qquad\qquad\qquad\qquad  + \tfrac{1}{16(\nu-1)^3} (\partial_{12} v(x'))^2 Q_{\rm surf} ( M^{(1)} ) \, dx'. 
\end{align*}

\item Standard arguments in the theory of $\Gamma$-convergence show that for a sequence $(y_n)$ of almost minimizers of $E_n$ the in-plane displacement $u_n$, the out-of-plane displacement $v_n$ and the overall rotation $R^*_n$ converge (up to subsequences) to a minimizer $(u, v, R^*)$ of $E_{\rm vK}$, respectively, $E_{\rm vK}^{(\nu)}$.

\item For the original sequence $y_n$ near the lateral boundary there can be lattice cells for which only a subset of their corners belong to $\Lambda_n$. As a consequence these deformation cannot be guaranteed to be rigid on such cells and the scaled in-plane and out-of-plane displacements may blow up. We thus chose to modify in an atomistic neighborhood of the lateral boundary so as to pass to the globally well behaved quantities $\tilde{y}_n$, see Section \ref{sec:preparations}. For the original sequence $y_n$, Theorem~\ref{thm:Gammalimit} implies a $\Gamma$-convergence result with respect to weak convergence in $W^{1,2}_{\rm loc}$.

\end{enumerate}
\end{remark}

\medskip

\subsection{The $\Gamma$-convergence result under weaker assumptions}

One physically unsatisfying aspect of Theorem \ref{thm:Gammalimit} is the strong growth assumption {\bf (G)} which is in line with the corresponding continuum results \cite{FJM:06}. The problem is actually two-fold. First, typical physical interaction potentials, like Lennard-Jones potentials, do not grow at infinity but converge to a constant with derivatives going to $0$. And second, {\bf (G)} also implies that $W_{\rm cell}(-Z)>W_{\rm cell}(Z)$. In particular, the atomistic interaction could not even be $\Oo(3)$-invariant.

Contrary to the continuum case, it is actually possible to remove these restrictions in our atomistic approach. Indeed, if one assumes $\nu_n^5 \eps_n^2 \to 0$ or equivalently $h_n^5/\eps_n^3 \to 0$, then the von-Kármán energy scaling implies that the cell energy at every single cell must be small. In terms of the number of atom layers $\nu$, this condition includes the case of fixed $\nu$, as well as the case $\nu_n \to \infty$ as long as this divergence is sufficiently slow, namely $\nu_n \ll \eps_n^{-2/5}$.

In this case, growth assumptions at infinity should no longer be relevant. In fact, we can replace {\bf (G)} by the following much weaker assumption with no growth at infinity and full $\Oo(3)$-invariance.
\begin{itemize}
\item[{\bf (NG)}] Assume that $W_{\rm cell}(A)= W_{\rm cell}(-A)$ and that there is some neighborhood $U$ of $\Oo(3)Z$ and a $c_0>0$ such that
\[W_{\rm cell}(A) \geq c_0 \dist^2(A, \Oo(3)Z)\]
for all $A \in U$ with $\sum_{i=1}^8 A_{\cdot i} =0$ and 
\[W_{\rm cell}(A) \geq c_0\]
for all $A \notin U$ with $\sum_{i=1}^8 A_{\cdot i} =0$.
\end{itemize}
One natural problem arising from this is that atoms that are further apart in the reference configuration can end up at the same position after deforming. In particular, due to the full $\Oo(3)$-symmetry, neighboring cells can be flipped into each other without any cost to the cell energies, which completely destroys any rigidity that one expects in this problem.

As a remedy, whenever we assume {\bf (NG)}, we will add a rather mild non-penetration term to the energy that can be thought of as a minimal term representing interactions between atoms that are further apart in the reference configuration. To make this precise, for small $\delta, \gamma > 0$ let $V \colon \R^3\times \R^3 \to [0,\infty]$ be any function with $V(v,w) \geq \gamma$ if $\lvert v -w \rvert < \delta$ and $V(v,w) = 0$ if $\lvert v -w \rvert \geq 2\delta$. Then define
\[E_{\rm nonpen}(w) =  \sum_{x,\bar{x} \in \Lambda_n} V\Big(\frac{w(x)}{\eps},\frac{w(\bar{x})}{\eps}\Big).\]
Then, $\gamma >0$ ensures that there is a positive energy contribution whenever two atoms are closer than $\delta \eps$.

The overall energy is then given by
\begin{equation} \label{eq:energywithnonpen}
E_n(w) = \frac{\eps_n^3}{h_n} \big(E_{\rm atom}(w) + E_{\rm body}(w) + E_{\rm nonpen}(w) \big).
\end{equation}

\begin{theorem} \label{thm:Gammalimit2}
Assume that $\nu_n^5 \eps_n^2 \to 0$, that $f_n=0$, that $E_n$ is given by \eqref{eq:energywithnonpen}, and that
{\bf (G)} is replaced by {\bf (NG)}. Then all the statements of Theorem \ref{thm:Gammalimit} remain true, where now $R^*_n, R^* \in \Oo(3)$.
\end{theorem}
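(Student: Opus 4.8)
The plan is to reduce the statement under assumption {\bf (NG)} (plus the non-penetration term) to the already-established statement of Theorem~\ref{thm:Gammalimit} under {\bf (G)}, by showing that in the regime $\nu_n^5\eps_n^2\to 0$ the two models are essentially indistinguishable on sequences of bounded von-Kármán energy. The key point, as announced in the text preceding the theorem, is that $h_n^5/\eps_n^3\to 0$ forces the \emph{per-cell} energy to be small: if $\tfrac{1}{h_n^4}E_n(y_n)\le C$, then $\sum_{x\in\tilde\Lambda_n'} W(x,\bar\nabla_n y_n(x)) \le C h_n^4 \eps_n^{-3} h_n = C h_n^5\eps_n^{-3} \to 0$ (up to the force and non-penetration contributions, which are nonnegative), while the number of cells is of order $\eps_n^{-2}$; hence the \emph{maximum} cell energy $\max_x W(x,\bar\nabla_n y_n(x))$ need not be small a priori, but the number of cells where it exceeds any fixed threshold is $o(\eps_n^{-2})$.

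First I would prove the compactness part. Given $y_n$ with $\tfrac{1}{h_n^4}E_n(y_n)\le C$, the energy bound plus {\bf (NG)} gives that on all but $o(\eps_n^{-2})$ cells one has $\bar\nabla_n y_n(x)$ within a fixed neighborhood $U$ of $\Oo(3)Z$, where the quadratic lower bound $W_{\rm cell}(A)\ge c_0\dist^2(A,\Oo(3)Z)$ is available; on the remaining ``bad'' cells one only knows $W_{\rm cell}\ge c_0$, so their total number is bounded by $c_0^{-1}h_n^5\eps_n^{-3}=o(\eps_n^{-2})$. The non-penetration term $E_{\rm nonpen}$ is precisely what rules out neighboring cells being reflected into one another: a short argument shows that if two adjacent cells carried rotations $R$ and $R'$ of opposite determinant, some pair of atoms would be brought within distance $\delta\eps_n$, costing energy $\ge\gamma\eps_n^3/h_n$, which is incompatible with the $h_n^4$ scaling once $h_n^5/\eps_n^3\to 0$. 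Consequently, after discarding the $o(\eps_n^{-2})$ bad cells and using a discrete geometric-rigidity / connectedness argument on the good cells (as in Section~\ref{sec:preparations}), there is a single choice $R^*_n\in\Oo(3)$ — not merely $\SO(3)$ — such that $\bar\nabla_n y_n$ is $L^2$-close to $R^*_n Z$ up to a controlled defect, and one recovers exactly the compactness statement of Theorem~\ref{thm:Gammalimit} with $R^*_n,R^*\in\Oo(3)$. The bad cells contribute $o(h_n^4)$ to the energy and $o(1)$ to all the rescaled quantities $u_n,v_n$, so they do not affect the limiting displacements.

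For the $\liminf$ inequality, on the good cells I would Taylor-expand $W_{\rm cell}$ around the relevant element of $\Oo(3)Z$; since $W_{\rm cell}(-A)=W_{\rm cell}(A)$ and $W_{\rm cell}$ is $C^2$ near $Z$, the Hessian at $-Z$ (and at any $RZ$, $R\in\Oo(3)$) is the pullback of $Q_{\rm cell}$ at $Z$, so the quadratic expansion produces exactly the same quadratic forms $Q_{\rm cell}$, $Q_{\rm surf}$, $Q_2$, $Q_{\rm cell}^{\rm rel}$ as in Theorem~\ref{thm:Gammalimit}; replacing $R^*_n\in\SO(3)$ by $R^*_n\in\Oo(3)$ in \eqref{eq:yn-tilde-def} changes nothing in the identification of the limiting strain (Proposition~\ref{prop:limiting-strain}) because that argument is frame-indifferent under the full $\Oo(3)$. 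The bad cells and the non-penetration term only add nonnegative amounts, hence can be dropped from below. The $\limsup$ inequality is actually easier: the recovery sequences constructed for Theorem~\ref{thm:Gammalimit} have $\bar\nabla_n y_n$ uniformly within a fixed neighborhood of $Z$ (they are small perturbations of the identity), so for $n$ large every cell is ``good'', $W=W_{\rm cell}+W_{\rm surf}$ there coincides with the expression used before, $E_{\rm nonpen}(y_n)=0$ since the deformation is nearly isometric and atoms stay $\eps_n$-separated, and the computation of $\lim \tfrac{1}{h_n^4}E_n(y_n)$ is verbatim the one in Theorem~\ref{thm:Gammalimit}; one simply takes $R^*_n=R^*$ for the prescribed $R^*\in\Oo(3)$.

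The main obstacle is the compactness step, specifically controlling the bad cells and excluding local reflections. One must show that the $o(\eps_n^{-2})$ cells on which no quadratic control is available can be ``filled in'' — or bypassed — without destroying the discrete rigidity estimate that underlies the whole analysis: the geometric-rigidity argument needs the defect set to be small in a quantitative, not merely cardinality, sense, and one needs the non-penetration term to genuinely prevent the good region from splitting into chunks with incompatible orientations. This is exactly where the hypothesis $\nu_n^5\eps_n^2\to 0$ is used in an essential way (it makes $\gamma\eps_n^3/h_n \gg h_n^4$, so a single reflected interface is already too expensive), and making this interplay precise — ideally by adapting the rigidity-with-defects estimates and the interpolation scheme of Section~\ref{sec:preparations} to tolerate an $o(\eps_n^{-2})$ exceptional set — is the technical heart of the proof. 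Once compactness with $R^*_n\in\Oo(3)$ and a negligible bad set is in hand, the rest follows the blueprint of Theorem~\ref{thm:Gammalimit} with only cosmetic changes.
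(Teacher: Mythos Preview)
Your overall strategy---reduce to Theorem~\ref{thm:Gammalimit} by showing that under {\bf (NG)} and $\nu_n^5\eps_n^2\to 0$ every discrete gradient lies near $\Oo(3)Z$, then use non-penetration to fix a global sign $\sigma_n\in\{\pm1\}$---is exactly the paper's approach. However, you miss a crucial simplification hidden in your own computation, and this leads you to invent a ``main obstacle'' that does not exist.

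You correctly observe that $\sum_{x}W(x,\bar\nabla_n y_n(x))\le C h_n^5\eps_n^{-3}\to 0$. But all summands are nonnegative, so each individual term, in particular the maximum, is also $\le C h_n^5\eps_n^{-3}\to 0$. Thus for $n$ large the per-cell energy is below $c_0$ at \emph{every} interior cell---there are \emph{no} bad cells whatsoever. Your sentence ``the maximum cell energy need not be small a priori'' is simply false given the line before it. Consequently the entire apparatus you propose (rigidity with defects, filling in an $o(\eps_n^{-2})$ exceptional set, worrying that the good region might disconnect) is unnecessary. Once every cell satisfies $\bar\nabla w_n(x)\in U$, assumption {\bf (NG)} gives the quadratic bound on all of them; the non-penetration argument then shows the sign $\sigma_n(x)$ is constant across face-adjacent cells (the paper does this by an explicit computation showing that two adjacent cells with opposite signs would place two specific atoms within $C(h_n^5/\eps_n^3)\eps_n<\delta\eps_n$ of each other), and one applies the machinery of Section~\ref{sec:preparations} to $\sigma_n w_n$ verbatim. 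Your treatment of the $\liminf$ and $\limsup$ inequalities is fine.
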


Note that in this version, we assume $f_n =0$. Indeed, if one were to include forces, one can typically reduce the energy by moving an atom infinitely far away in a suitable direction. Without any growth assumption in the interaction energy this can easily lead to $\inf E_n = - \infty$ and a loss of compactness. However, this is just a problem about global energy minimization. Not only should there still be well-behaved local minima of the energy, but the energy barrier in between should become infinite in the von-Kármán energy scaling.

In the spirit of local $\Gamma$-convergence, we can thus consider the set of admissible functions
\[
\mathcal{S}_\delta = \{w: \Lambda_n \to \R^3 \text{ such that } \dist(\bar{\nabla} w(x),\SO(3)Z) <\delta \text{ for all } x \in {\Lambda'_n}^{\circ}\}, 
\]
where ${\Lambda'_n}^{\circ}$ labels `interior cells' away from the lateral boundary, cf.\ Section~\ref{sec:preparations}.
 This leads us to the total energy
\begin{equation} \label{eq:energylocal}
E_n(w) = \begin{cases} 
        \frac{\eps_n^3}{h_n} \big(E_{\rm atom}(w) + E_{\rm body}(w) \big) & \text{if } w \in \mathcal{S}_\delta, \\ 
        \infty & \text{else}.
     \end{cases}
\end{equation}

We then have a version of the $\Gamma$-limit that does allow for forces.
\begin{theorem}\label{thm:Gammalimit3}
Assume that $\nu_n^5 \eps_n^2 \to 0$, that $E_n$ is given by \eqref{eq:energylocal} with $\delta>0$ sufficiently small, and that {\bf (G)} is replaced by {\bf (NG)}. Then all the statements of Theorem \ref{thm:Gammalimit} remain true. Furthermore, there is an infinite energy barrier in the sense that
\[ \lim_{n \to \infty} \inf \Big\{ \frac{1}{h_n^4} E_n(w) : w \in \mathcal{S}_\delta \backslash \mathcal{S}_{\delta/2} \Big\}= \infty.\]
\end{theorem}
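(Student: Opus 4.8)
The plan is to deduce Theorem~\ref{thm:Gammalimit3} from Theorem~\ref{thm:Gammalimit2}, since the two energies \eqref{eq:energywithnonpen} and \eqref{eq:energylocal} agree with each other up to harmless modifications on configurations that are rigid cell by cell. Concretely, for $w \in \mathcal{S}_\delta$ one has $\dist(\bar\nabla w(x),\SO(3)Z)<\delta$ on every interior cell, so for $\delta$ small enough the non-penetration term $E_{\rm nonpen}$ vanishes on interior cells (corners of a single cell, and of adjacent cells, stay a definite distance apart once $\bar\nabla w$ is close to a rotation of $Z$), and conversely any recovery sequence constructed for Theorem~\ref{thm:Gammalimit2} can be taken inside $\mathcal{S}_\delta$ for $n$ large because the recovery deformations converge to the identity gradient on interior cells. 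Hence the lower bound, compactness, and the existence of recovery sequences all transfer; the only genuinely new content is (i) that forces may now be re-included, which works because on $\mathcal{S}_\delta$ the deformations near the identity obey the same a~priori estimates as under {\bf (G)} (the geometric rigidity / Friesecke--James--Müller machinery is only ever applied on interior cells where $\dist(\bar\nabla w(x),\SO(3)Z)<\delta$ anyway), and so the body-force term is controlled exactly as in the proof of Theorem~\ref{thm:Gammalimit}a)/b); and (ii) the infinite energy barrier.

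For the energy barrier I would argue by contradiction. Suppose there is a sequence $w_n \in \mathcal{S}_\delta \setminus \mathcal{S}_{\delta/2}$ with $\frac{1}{h_n^4}E_n(w_n) \le C$. Since $w_n \in \mathcal{S}_\delta$, the energy is finite and given by the atomistic expression, so the compactness half of Theorem~\ref{thm:Gammalimit2} (its proof, not just the statement — one needs the interior estimates it produces) applies and yields, after rescaling and the interpolation/boundary-modification of Section~\ref{sec:preparations}, a $\tilde y_n \to \id$ in $W^{1,2}_{\rm loc}$ together with the von-Kármán displacements $u_n \wto u$, $v_n \to v$. The key quantitative point to extract is that bounded von-Kármán energy forces the discrete gradient to be \emph{uniformly} close to $\SO(3)Z$ on interior cells: more precisely, one should show $\sup_{x \in {\Lambda'_n}^\circ} \dist(\bar\nabla w_n(x),\SO(3)Z) \to 0$. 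This is exactly the kind of statement that underlies Proposition~\ref{prop:limiting-strain} and the $h_n^4$-scaling — bounded energy at scale $h_n^4$ means the total squared distance to $\SO(3)Z$, suitably weighted, is $O(h_n^4)$, and combined with an $L^\infty$-type bound (obtainable from the rigidity estimate applied on mesoscopic patches, since deformations are $W^{1,2}$-close to rotations on all of $\Omega$ minus the boundary layer) this drives the cellwise distance to zero uniformly on the interior.

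Granting that uniform bound, for $n$ large we get $\dist(\bar\nabla w_n(x),\SO(3)Z) < \delta/2$ on every interior cell, i.e.\ $w_n \in \mathcal{S}_{\delta/2}$, contradicting $w_n \notin \mathcal{S}_{\delta/2}$. Therefore no such bounded-energy sequence exists and the infimum over $\mathcal{S}_\delta \setminus \mathcal{S}_{\delta/2}$ diverges. I expect the main obstacle to be precisely step (ii)'s quantitative claim: upgrading the integrated $O(h_n^4)$ energy control to a \emph{uniform in $x$} smallness of the cellwise strain. The integrated bound alone does not preclude a single bad cell carrying a distance of order, say, $\eps_n^{1/4}$, so one needs to combine the energy bound with the geometric rigidity estimate on overlapping mesoscopic cubes (whose side length is chosen so that the rigidity constant is harmless and the $W^{1,2}$-closeness of $\tilde y_n$ to a single rotation on each cube is quantitatively controlled) to propagate closeness from the bulk to every individual interior cell. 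Once that lemma is in place — and it is morally already contained in the compactness analysis behind Theorems~\ref{thm:Gammalimit}--\ref{thm:Gammalimit2} — the barrier statement follows immediately.
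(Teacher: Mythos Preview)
Your reduction of the $\Gamma$-convergence statements to Theorem~\ref{thm:Gammalimit} (via the observation that on $\mathcal{S}_\delta$ the assumptions {\bf (G)} and {\bf (NG)} are effectively equivalent for $\delta$ small) is essentially the paper's argument: for the lower bound one replaces $W_{\rm cell}$ by a modified $W_{\rm cell}'$ satisfying {\bf (G)} globally and agreeing with $W_{\rm cell}$ on $\{\dist(\cdot,\SO(3)Z)<\delta\}$; compactness goes through Proposition~\ref{prop:energy-estimates}; the recovery sequences from the proof of Theorem~\ref{thm:Gammalimit} have $\bar\nabla_n\bar y_n$ uniformly close to $Z$, hence lie in $\mathcal{S}_\delta$ for large $n$. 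Your handling of the force term is also in line with the paper.

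The energy barrier, however, is much simpler than you make it, and the ``main obstacle'' you identify is not an obstacle at all --- you are not using the hypothesis $\nu_n^5\eps_n^2\to 0$. Suppose $w_n\in\mathcal{S}_\delta$ with $h_n^{-4}E_n(w_n)\le C$. The proof of Proposition~\ref{prop:energy-estimates} gives $\frac{\eps_n^3}{h_n}E_{\rm atom}(w_n)\le Ch_n^4$, i.e.\ $E_{\rm atom}(w_n)\le Ch_n^5\eps_n^{-3}=C(\nu_n-1)^5\eps_n^2$. But $E_{\rm atom}$ is a \emph{sum of nonnegative} cell contributions, so each single summand is bounded by the total:
\[
  W_{\rm cell}(\bar\nabla w_n(x))\le E_{\rm atom}(w_n)\le C(\nu_n-1)^5\eps_n^2
  \qquad\text{for every }x\in{\Lambda_n'}^\circ.
\]
Since $w_n\in\mathcal{S}_\delta$ forces $\bar\nabla w_n(x)$ into the region where {\bf (NG)} gives $W_{\rm cell}\ge c_0\dist^2(\cdot,\SO(3)Z)$, we get $\dist^2(\bar\nabla w_n(x),\SO(3)Z)\le C(\nu_n-1)^5\eps_n^2\to 0$ \emph{uniformly in $x$}, hence $w_n\in\mathcal{S}_{\delta/2}$ for $n$ large. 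No mesoscopic rigidity argument is needed: the discreteness of the energy (a finite sum rather than an integral) together with the hypothesis $\nu_n^5\eps_n^2\to 0$ upgrades the global bound to a cellwise one for free.
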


\begin{remark}
\begin{enumerate}
\item For $n$ large enough, the energy barrier implies that minimizers of the restricted energy \eqref{eq:energywithnonpen} correspond to local minimizers of the unrestricted energy \eqref{eq:energy}. The results thus implies convergence of local minimizers of \eqref{eq:energy} in $\mathcal{S}_\delta$.

\item To formulate it differently, if a sequence $(w_n)$ is not separated by a diverging (unrestricted) energy barrier from the reference state $\id$, i.e.\ each $w_n$ can be connected by a continuous path of deformations $(w_n^t)_{t \in [0,1]}$ with equibounded energy $E_{\rm atom}(w_n^t) + E_{\rm body}(w_n^t)$, then $w_n \in \mathcal{S}_\delta$ for large $n$. This implies convergence of minimizers of the unrestricted energy under the assumption that a diverging energy barrier cannot be overcome. 

\item As the energy only has to be prescribed in $\mathcal{S}_\delta$, Theorem~\ref{thm:Gammalimit3} also describes local minimizers of energy functionals which are invariant under particle relabeling for point configurations which after labeling with their nearest lattice site by $\{w(x) : x \in \Lambda_n\}$ belong to $\mathcal{S}_{\delta}$, where their energy can be written in the form \eqref{eq:energylocal}.
\end{enumerate}
\end{remark}

\begin{example}\label{ex:mass-spring-LJ}
In the setting of Theorems~\ref{thm:Gammalimit2} and \ref{thm:Gammalimit3}, Example~\ref{ex:mass-spring-orient} can be generalized to energies of the form 
\begin{align*} 
  E_{\rm atom}(w) 
  &= \frac{\alpha}{4} \sum_{x,x' \in \Lambda_n \atop |x - x'| = \eps_n} V_1\Big( \frac{|w(x) - w(x')|}{\eps_n} - 1 \Big) + \frac{\beta}{4} \sum_{x,x' \in \Lambda_n \atop |x - x'| = \sqrt{2} \eps_n} V_2 \Big( \frac{|w(x) - w(x')|}{\eps_n} - \sqrt{2} \Big),  
\end{align*} 
where $V_1, V_2$ are pair interaction potentials with $V_i(0) = 0$, $V_i$ $C^2$ in a neighborhood of $0$ and $V_i(r) \ge c_0 \min\{ r^2, 1 \}$ for some $c_0 > 0$. (This is satisfied, e.g., for the Lennard-Jones potential $r \mapsto (1+r)^{-12} - 2 (1+r)^{-6} + 1$.) Due to the non-penetration term in \eqref{eq:energywithnonpen} no additional penalty terms for orientation preservation are necessary. Most notably, it is not assumed that $V_i(r) \to \infty$ as $r \to \infty$.
\end{example}

%-----------------------------------------------------------------
\section{Preparations} \label{sec:preparations}

We first extend a lattice deformation slightly beyond $\Lambda_n$, thereby possibly modifying near the lateral boundary $\partial S \times [0,h_n]$ where lattice cells might not be completely contained in $\bar{\Omega}_n$. Then we interpolate so as to obtain continuum deformations to which the continuum theory set forth in \cite{fjm02fvK,FJM:06} applies. 

For $x \in \Lambda_n'$, with $\Lambda_n'$ as defined at the beginning of Section~\ref{section:Models-and-Results}, we set 
\[ Q_n(x) 
   = x + (-\tfrac{\eps_n}{2}, \tfrac{\eps_n}{2})^3. \]
and also write $Q_n(\xi) = Q_n(x)$ whenever $\xi \in Q_n(x)$.

%-----------------------------------------------------------------
\subsection{Modification and Extension} 

On a cell that has a corner outside of $\Lambda_n $ there is no analogue to {\bf (G)} (or {\bf (NG)}) and hence no control of $\vec{w}(x)$ in terms of $W(x,\vec{w}(x))$. For this reason we modify our discrete deformations $w : \Lambda_n \to \R^3$ near the lateral boundary of $\Omega_n$. 

Let $S_n = \{ x \in S : \dist(x, \partial S) > \sqrt{2} \eps_n \}$ and note that, for $\eps_n > 0$ sufficiently small, $S_n$ is connected with a Lipschitz boundary. (This follows from the fact that $\partial S$ can be parameterized with finitely many Lip\-schitz charts.) If  $x \in \Lambda_n'$ is such that $\overline{Q_n(x)} \cap (S_n \times \R) \ne \emptyset$, we call $Q_{n}(x)$ an {\em inner cell} and write $x \in {\Lambda_n'}^{\circ}$. The corners of these cells are the interior atom positions $\Lambda_n^{\circ} = {\Lambda_n'}^{\circ} + \eps_n \{z^1, \ldots, z^8\}$ and the part of the specimen made of such inner cells is denoted 
$$ \Omega^{\rm in}_{n} 
   = \bigg(\bigcup_{x \in {\Lambda_n'}^{\circ}} \overline{Q_{n}(x)} \bigg)^{\circ}. $$
 Recall the definition of $\Lambda_n'$ from Section~\ref{section:Models-and-Results} and set  
$$ \bar{\Lambda}_n 
   = \Lambda_n' + \{ z^1, \ldots, z^8 \}, 
   \qquad
   \Omega^{\rm out}_{n} 
   = \bigg(\bigcup_{x \in \Lambda_n'} \overline{Q_{n}(x)} \bigg)^{\circ}. $$
The (lateral) {\em boundary cells} $Q_{n}(x)$ are those for which 
$$ x \in \partial \Lambda_n' := \Lambda_n' \setminus {\Lambda_n'}^{\circ}. $$ 
 Later we will also use the rescaled versions of these sets which are denoted $\tilde{\Lambda}_n = H_n^{-1} \Lambda_n$, $\tilde{\bar{\Lambda}}_n = H_n^{-1} \bar{\Lambda}_n$, ${\tilde{\Lambda}_n}^{\circ} = H_n^{-1} \Lambda_n^{\circ}$, $\tilde{\Lambda}'_n = H_n^{-1} \Lambda_n'$, $(\tilde{\Lambda}_n')^{\circ} = {\Lambda_n'}^{\circ}$. The rescaled lattice cells are $\tilde{Q}_n(x) = H_n^{-1} Q_n(H_n x)$.

If $w : \Lambda_n \to \R^3$ is a lattice deformation, following \cite{schmidtlinelast} we define a modification and extension $w' : \bar{\Lambda}_n \to \R^3$ as follows. First we set $w'(x) = w(x)$ if $x \in \Lambda_n^{\circ}$. Now partition $\partial \Lambda_n'$ into the $8$ sublattices $\partial \Lambda_{n,i}' = \partial \Lambda_n' \cap \eps_n ( z^i + 2\Z^3 )$. We apply the following extension procedure consecutively for $i = 1, \ldots, 8$:  

For every cell $Q = Q_{n}(x)$ with $x \in \partial \Lambda_{n,i}'$ such that there exists a neighboring cell $Q' = Q_{n}(x')$, i.e. sharing a face with $Q$, on the corners of which $w'$ has been defined already, we extend $w'$ to all corners of $Q$ by choosing an extension $w'$ such that $\dist^2({\bar{\nabla} w(x)}, \SO(3)Z)$ is minimal. 

As a result of this procedure, $w'$ will be defined on every corner of each cell neighboring an inner cell. Now we repeat this procedure until $w'$ is extended to $\bar{\Lambda}_n$, i.e., to every corner of all inner and boundary cells. Since $S$ is assumed to have a Lipschitz boundary, the number of iterations needed to define $w'$ on all boundary cells is bounded independently of $\eps$. 

This modification scheme guarantees that the rigidity and displacements of boundary cells can be controlled in terms of the displacements, respectively, rigidity of inner cells, see \cite[Lemmas~3.2 and 3.4]{schmidtlinelast}\footnote{We apply these lemmas without a Dirichlet part of the boundary, i.e., $\partial \mathcal{L}'_{\eps}(\Omega)_* = \emptyset$ in the notation of \cite{schmidtlinelast}. Note also that there is a typo in the statement of these lemmas. The set $\mathcal{B}_{\eps}$ should read $\{ \bar{x} \in \mathcal{L}_{\eps}'(\Omega)^{\circ} \cup \partial \mathcal{L}_{\eps}'(\Omega)_* : \bar{x} \notin V_{\eps} \}$, which in our notation (and without Dirichlet part of the boundary) is s subset of ${\Lambda_n'}^{\circ}$.}: 
\begin{lemma}\label{lemma:bdry-control}
There exist constants $c, C > 0$ (independent of $n$) such that for any $w : \Lambda_n \to \R^3$ and $R^* \in \SO(3)$
$$ \sum_{x \in \partial \Lambda_n'} 
   | \bar{\nabla} w'(x) - R^* Z |^2 
   \le C \sum_{x \in {\Lambda_n'}^{\circ}} | \bar{\nabla} w'(x) - R^* Z |^2 $$
as well as
$$ \sum_{x \in \partial \Lambda_n'} 
   \dist^2(\bar{\nabla} w'(x), \SO(3)Z) 
   \le C \sum_{x \in {\Lambda_n'}^{\circ}} \dist^2(\bar{\nabla} w'(x), \SO(3)Z). $$
\end{lemma}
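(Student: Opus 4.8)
The statement is cited from \cite[Lemmas~3.2 and 3.4]{schmidtlinelast}, so the plan is to reduce the present setting to the hypotheses of those lemmas rather than reprove them. The key structural fact is that the extension procedure defining $w'$ on $\bar\Lambda_n$ propagates from inner cells outward in a bounded number of steps: because $\partial S$ has a Lipschitz boundary, it can be covered by finitely many Lipschitz charts, and in each chart the lateral boundary layer $S\setminus S_n$ has width comparable to $\eps_n$, so every boundary cell $Q_n(x)$, $x\in\partial\Lambda_n'$, can be reached from some inner cell by a chain of face-adjacent cells of length bounded by a constant $N$ depending only on the Lipschitz constants of $\partial S$, uniformly in $n$. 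This is exactly the combinatorial input that \cite{schmidtlinelast} abstracts into its boundary-layer hypothesis.

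Granting this, I would run the argument cell by cell along such a chain. Let $Q=Q_n(x)$ be a boundary cell that, in the $i$-th round of the extension, is filled in from a neighbor $Q'=Q_n(x')$ sharing a face $F$. Four corners of $Q$ (those on $F$) already carry values of $w'$ coming from $Q'$; the remaining four are chosen so as to minimize $\dist^2(\bar\nabla w'(x),\SO(3)Z)$. Comparing with the competitor obtained by extending the rigid motion that best fits $\bar\nabla w'(x')$ — i.e.\ filling the free corners of $Q$ using the affine map $R'(\,\cdot\,)+b'$ with $R'\in\SO(3)$ realizing $\dist(\bar\nabla w'(x'),\SO(3)Z)$ on the shared face — one gets
\[
  \dist^2(\bar\nabla w'(x),\SO(3)Z)
  \le C\Big( \dist^2(\bar\nabla w'(x'),\SO(3)Z) + |\bar\nabla w'(x') - R'Z|^2 \Big)
  \le C\,\dist^2(\bar\nabla w'(x'),\SO(3)Z),
\]
using that the four shared-face values of $w'$ on $Q$ and on $Q'$ agree and that the discrete gradient of a cell is controlled by the discrete gradient of a face-adjacent cell plus the values on the shared face (a finite-dimensional linear-algebra estimate, with constants independent of $n$ after rescaling out $\eps_n$). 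The same comparison, now testing against the \emph{global} rigid motion $R^*(\,\cdot\,)$ rather than the cell-optimal one, yields the first, $R^*$-anchored inequality $|\bar\nabla w'(x)-R^*Z|^2\le C\,|\bar\nabla w'(x')-R^*Z|^2$. Iterating along the chain, whose length is at most $N$, and then summing over all boundary cells — each of which is charged to at most boundedly many inner cells because the chains have bounded length and the lattice has bounded local complexity — gives both claimed estimates with a constant $C$ depending only on $S$ and $N$, hence independent of $n$.

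I would then note the one subtlety in bookkeeping: in the $i$-th round a boundary cell may be filled from a cell $Q'$ that is itself a boundary cell filled in round $i-1$, so the estimate genuinely has to be chained and the "source" inner cell is the head of the chain; but since the number of rounds is $\le N$ and at each round the constant picks up only a bounded factor, the total blow-up is $C^N$, still a fixed constant. This is precisely the content of the two cited lemmas once one strips away their Dirichlet-boundary generality (which we do not need, as recorded in the footnote), so it suffices to verify that our $\partial\Lambda_n'$, $\Lambda_n'^{\circ}$ match their $\mathcal{L}_\eps'(\Omega)$ bookkeeping — which it does after the correction to $\mathcal{B}_\eps$ noted there.

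\textbf{Main obstacle.} The only nontrivial point is the uniform bound on the number of extension rounds and on the length of the chains connecting boundary cells to inner cells; everything else is a fixed finite-dimensional estimate on a single cell. This bound is where the Lipschitz regularity of $\partial S$ enters essentially — without it the boundary layer could be arbitrarily thick relative to $\eps_n$ in some direction — and it is exactly the hypothesis under which \cite{schmidtlinelast} proves the result, so in the write-up I would simply invoke those lemmas and confine myself to checking that the Lipschitz-chart argument for $\partial S$ supplies their hypothesis in our notation.
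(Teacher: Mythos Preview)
Your proposal is correct and matches the paper's approach exactly: the paper does not prove the lemma but simply invokes \cite[Lemmas~3.2 and 3.4]{schmidtlinelast}, and the hypothesis you identify---that the Lipschitz regularity of $\partial S$ bounds the number of extension rounds uniformly in $n$---is precisely what needs to be checked to apply those lemmas in the present notation. One small imprecision in your sketch worth flagging: for the $R^*$-anchored inequality, ``testing against $R^*$'' only bounds $\dist^2(\bar\nabla w'(x),\SO(3)Z)$ by $C|\bar\nabla w'(x')-R^*Z|^2$ (since the extension minimizes $\dist^2$, not $|\cdot-R^*Z|^2$), and one then needs the extra observation that the shared-face values force the cell-optimal rotation $R_x$ to lie within $C|\bar\nabla w'(x')-R^*Z|$ of $R^*$ (the face fixes two columns and $\det=1$ the third)---but as you explicitly defer to the cited lemmas for the actual argument, this does not affect the correctness of your plan.
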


For the sake of notational simplicity, we will sometimes write $w$ instead of $w'$.

%-----------------------------------------------------------------
\subsection{Interpolation} 

Let $w : \bar{\Lambda}_n \to \R^3$ be a (modified and extended) lattice deformation. We introduce two different interpolations: $\tilde{w}$ and $\bar{w}$. $\tilde{w} \in W^{1,2}(\Omega^{\rm out}_n; \R^3)$ is obtained by a specific piecewise affine interpolation scheme as in \cite{Sch:06,schmidtlinelast} which in particular associates the exact average of atomic positions to the center and to the faces of lattice cells. This will allow for a direct application of the results in \cite{FJM:06} on continuum plates. By way of contrast, $\bar{w}$ is a piecewise constant interpolation on the lattice Voronoi cells of $\bar{\Lambda}_n$. The advantage of this interpolation will be that a discrete gradient of $w$ translates into a continuum finite difference operator acting on $\bar{w}$.

Let $x \in \Lambda_n'$. In order to define $\tilde{w}$ on the cube $\overline{Q(x)}$ we first set $\tilde{w}(x) = \frac{1}{8} \sum_{i = 1}^8 w(x + \eps_n z^i)$. Next, for the six centers $v^1, \ldots, v^6$ of the faces $F^1, \ldots, F^6$ of $[-\frac{1}{2}, \frac{1}{2}]^3$ we set $\tilde{w}(x + \eps_n v^i) = \frac{1}{4} \sum_j w(x + \eps_n z^j)$, where the sum runs over those $j$ such that $z^j$ is a corner of the face with center $v^i$. Finally, we interpolate linearly on each of the 24 simplices 
\[ \operatorname{co} ( x, x + \eps_n v^k, x + \eps_n z^i, x + \eps_n z^j ) \] 
with $|z^i - z^j| = 1$, $|z^i - v^k| = |z^j - v^k| = \tfrac{1}{\sqrt{2}}$, i.e., whose corners are given by the cube center and the center and two neighboring vertices of one face. Note that for this interpolation
\begin{align}
\tilde{w}(x) &= \dashint_{Q(x)} \tilde{w}(\xi) \, d\xi, \\
\tilde{w}(x + \eps_n v^k) &= \dashint_{x + \eps_n F^k} \tilde{w}(\zeta) \, d \zeta, \label{eq:interpolsurf}
\end{align}
for every face $x + \eps_n F^k$ of $Q(x)$. 

For the second interpolation we first let $V_n^{\rm out} := \big( \bigcup_{x \in \bar{\Lambda}_n} ( x + [-\tfrac{\eps_n}{2}, \tfrac{\eps_n}{2}]^3 ) \big)^{\circ}$ and then define $\bar{w} \in L^2(V_n^{\rm out}; \R^3)$ by $\bar{w}(\xi) = w(x)$ for all $\xi \in x + (-\tfrac{\eps_n}{2}, \tfrac{\eps_n}{2})^3$, $x \in \bar{\Lambda}_n$.
%\marginpar{\cm \tt Nun ist $\bar{\nabla} w(x)$ nur noch fuer $x \in \Lambda'_n$ definiert, auf $\Omega^{\rm out}_{n}$ hat man $\bar{\nabla} \bar{w}(x)$.}
Note that 
\[ \bar{\nabla} \bar{w}(x) = \frac{1}{\eps_n} \big(\bar{w}(x+\eps_n z^1) - \langle \bar{w} \rangle , \dots, \bar{w}(x+\eps_n z^8) - \langle \bar{w} \rangle\big) \]
with $\langle \bar{w} \rangle = \frac{1}{8} \sum_{i=1}^8 \bar{w}(x+\eps_n z^i)$
defines a piecewise constant mapping on $\Omega^{\rm out}_{n}$ such that 
\[ \bar{\nabla} \bar{w}(\xi) 
   = \bar{\nabla} w(x) 
   \quad \text{whenever} \quad \xi \in Q_n(x),~ x \in \Lambda_n'. \]

It is not hard to see that the original function controls the interpolation and vice versa. 
\begin{lemma}\label{lemma:interpol-Vergleich}
There exist constants $c, C > 0$ such that for any (modified, extended and interpolated) lattice deformation $\tilde{w} : \Omega^{\rm out}_{n} \to \R^3$ and any cell $Q = Q_n(x)$, $x \in \Lambda_n'$, 
\begin{align*}
  c |\bar{\nabla} w(x)|^2 
  \le \eps_n^{-3} \int_Q |\nabla \tilde{w}(\xi)|^2 \, d\xi 
  \le C |\bar{\nabla} w(x)|^2. 
\end{align*}
\end{lemma}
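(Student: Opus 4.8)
The plan is to establish the two-sided bound in Lemma~\ref{lemma:interpol-Vergleich} by a purely finite-dimensional linear algebra argument on the single reference cell, combined with the translation invariance built into the discrete gradient $\bar\nabla$ and a scaling argument to transfer it to the cell $Q_n(x)$. First I would reduce to the reference cube $Q = (-\tfrac12,\tfrac12)^3$ with unit lattice spacing $\eps_n = 1$: by the substitution $\xi = x + \eps_n \eta$ one has $\eps_n^{-3}\int_{Q_n(x)} |\nabla\tilde w|^2\,d\xi = \int_{(-1/2,1/2)^3} |\nabla_\eta \hat w|^2\,d\eta$, where $\hat w(\eta) = \eps_n^{-1}(\tilde w(x+\eps_n\eta) - c)$ for an arbitrary constant $c \in \R^3$, and simultaneously $|\bar\nabla w(x)|$ is unchanged (it already only involves scaled differences and is translation-invariant, so we may subtract off $\langle w\rangle$ or any affine shift). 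Thus it suffices to prove $c\,|\bar\nabla w(x)|^2 \le \int_{(-1/2,1/2)^3}|\nabla\hat w|^2 \le C\,|\bar\nabla w(x)|^2$ with absolute constants for the interpolation of eight corner values $w(z^1),\dots,w(z^8) \in \R^3$.

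Next I would observe that both sides are quadratic forms in the reduced data $\bar\nabla w(x) \in \R^{3\times 8}$, which after modding out translations lives in the $21$-dimensional space $\{A \in \R^{3\times 8} : \sum_i A_{\cdot i} = 0\}$. On one hand, the piecewise affine interpolation $\tilde w$ on the $24$ simplices is a fixed linear function of the corner values (the cell center and the six face centers being explicit averages of corners), so $\nabla\tilde w$ is piecewise constant and each of its $24$ constant values is a fixed linear combination of the corner differences; hence $\eta\mapsto\int|\nabla\hat w|^2$ is a quadratic form $\mathcal{Q}(\bar\nabla w(x))$ on that $21$-dimensional space. The upper bound $\mathcal{Q}(A) \le C|A|^2$ is then immediate from continuity/boundedness of a quadratic form on a finite-dimensional space. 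For the lower bound I would argue that $\mathcal{Q}$ is \emph{positive definite} on $\{\sum_i A_{\cdot i}=0\}$: if $\int_{(-1/2,1/2)^3}|\nabla\hat w|^2 = 0$ then $\hat w$ is globally constant on the connected cube, forcing all eight corner values equal, i.e.\ $A = \bar\nabla w(x) = 0$. Positive definiteness of a quadratic form on a finite-dimensional space gives $\mathcal{Q}(A) \ge c|A|^2$ with $c = $ its smallest eigenvalue $> 0$, and these constants are absolute since the reference cell is fixed.

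The main obstacle — really the only point requiring care — is verifying that the specific interpolation scheme does not introduce a degeneracy: one must check that the map from corner values to $\nabla\tilde w$ genuinely has trivial kernel modulo translations, i.e.\ that no nonzero choice of corner data produces $\tilde w \equiv$ const. This is exactly the connectedness argument above (the $24$ simplices tile the cube and their union is connected, and $\tilde w$ is continuous across them by construction), so there is no real difficulty; the slightly fiddly part is just being sure the intermediate nodes (center, face centers) are defined consistently with the corner data, which the paper has already laid out. Finally I would transport the inequality back to $Q_n(x)$ via the scaling identity from the first step, noting the factor $\eps_n^{-3}$ on the integral and $\eps_n^{-1}$ in the definition of $\bar\nabla$ combine exactly to make the stated estimate scale-invariant, which is why the constants $c, C$ are independent of $n$.
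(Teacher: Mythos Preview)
Your proposal is correct and takes essentially the same approach as the paper: rescale to the unit cell and invoke equivalence of norms (equivalently, positive definiteness of a quadratic form) on the finite-dimensional space of piecewise-affine interpolants modulo constants. The paper's proof is slightly terser, simply noting that both $\tilde w \mapsto |\bar\nabla \tilde w(x)|$ and $\tilde w \mapsto \|\nabla\tilde w\|_{L^2(Q)}$ are norms on the space of interpolants with $\int_Q \tilde w = 0$, but your more explicit verification of the kernel via connectedness is exactly what underlies that observation.
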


\begin{proof}
After translation and rescaling we may without loss assume that $\eps_n = 1$ and $Q = (0,1)^3$, hence $x = (\tfrac{1}{2}, \tfrac{1}{2}, \tfrac{1}{2})^T$. The claim then is an immediate consequence of the fact that both 
$$ \tilde{w} \mapsto |\bar{\nabla} \tilde{w}(x)| 
   \quad \mbox{and} \quad 
   \tilde{w} \mapsto \| \nabla \tilde{w} \|_{L^2(Q; \R^{3 \times 3})} $$ 
are norms on the finite dimensional space of continuous mappings $\tilde{w}$ which are affine on each $\operatorname{co} ( x, x + v^k, x + z^i, \eps_n z^j )$ with $|z^i - z^j| = 1$, $|z^i - v^k| = |z^j - v^k| = \tfrac{1}{\sqrt{2}}$, and which have $\int_Q \tilde{w}(\xi) \, d\xi = 0$. 
\end{proof}

\begin{lemma}\label{lemma:interpol-rigidity} 
There exist constants $c, C > 0$ such that for any (modified, extended and interpolated) lattice deformation $\tilde{w} : \Omega^{\rm out}_{n} \to \R^3$ and any cell $Q = Q_n(x)$, $x \in \Lambda_n'$, 
\begin{equation*}
  c \dist^2(\bar{\nabla} w(x), \SO(3)Z) 
  \leq \eps_n^{-3} \int_Q \dist^2 (\nabla \tilde{w}(\xi), \SO(3)) \, d\xi \leq C \dist^2(\bar{\nabla} w(x), \SO(3)Z). 
\end{equation*}
\end{lemma}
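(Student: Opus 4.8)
The strategy is to reduce Lemma~\ref{lemma:interpol-rigidity} to the quadratic growth estimate of Lemma~\ref{lemma:interpol-Vergleich}, since the distance to $\SO(3)$ is itself built from the Euclidean norm. As in the proof of Lemma~\ref{lemma:interpol-Vergleich}, after translation and rescaling we may assume $\eps_n = 1$ and $Q = (0,1)^3$ with center $x = (\tfrac12,\tfrac12,\tfrac12)^T$. The right-hand inequality is the easy direction: given any $R \in \SO(3)$, one compares $\tilde w$ on $Q$ with the affine map $\xi \mapsto R\xi + \text{const}$, whose interpolated discrete gradient on the cell is exactly $RZ$ (because the affine interpolation scheme reproduces affine functions and $v^k, 0$ are at the cell centroid and face centroids); applying Lemma~\ref{lemma:interpol-Vergleich} to the difference $\tilde w - R(\cdot)$ gives $|\bar\nabla w(x) - RZ|^2 \le C \int_Q |\nabla\tilde w - R|^2$, and optimizing over $R$ (with $R$ realizing the distance on the right, or a near-optimal one) yields $\dist^2(\bar\nabla w(x),\SO(3)Z) \le C \int_Q \dist^2(\nabla\tilde w,\SO(3))$.

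For the left-hand inequality the roles are reversed: pick $R \in \SO(3)$ with $|\bar\nabla w(x) - RZ|^2 = \dist^2(\bar\nabla w(x),\SO(3)Z)$. Then for a.e. $\xi \in Q$ we have $\dist(\nabla\tilde w(\xi),\SO(3)) \le |\nabla\tilde w(\xi) - R|$, so it suffices to bound $\int_Q |\nabla\tilde w(\xi) - R|^2$ by $C|\bar\nabla w(x) - RZ|^2$. Writing $\tilde w = R(\cdot) + (\text{rest})$ and noting that the affine part contributes $R$ to $\nabla\tilde w$ and $RZ$ to $\bar\nabla w(x)$, this is precisely the upper bound in Lemma~\ref{lemma:interpol-Vergleich} applied to the ``rest''. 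Hence $\eps_n^{-3}\int_Q \dist^2(\nabla\tilde w,\SO(3)) \le C \dist^2(\bar\nabla w(x),\SO(3)Z)$.

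The one point requiring a little care — and the main (mild) obstacle — is the opposite inequality's lower bound, i.e.\ showing $\dist^2(\bar\nabla w(x),\SO(3)Z)$ is controlled \emph{from above} by the integral of $\dist^2(\nabla\tilde w,\SO(3))$, because a priori the rotation minimizing the distance pointwise inside $Q$ varies with $\xi$ and need not be the one minimizing the discrete distance. The clean way around this is exactly the comparison argument above: choose a \emph{single} rotation $R$ realizing (or nearly realizing) $\min_{R\in\SO(3)} \int_Q \dist^2(\nabla\tilde w, \SO(3))$ is not available directly, so instead one uses that $\dist^2(\nabla\tilde w(\xi),\SO(3)) = \min_{R\in\SO(3)}|\nabla\tilde w(\xi)-R|^2$ and picks $\hat R$ attaining $\int_Q \dist^2(\nabla\tilde w,\SO(3)) \ge \tfrac12 \int_Q |\nabla\tilde w - \hat R|^2 - (\text{small})$ via a standard selection, or more simply invokes that on the relevant (small, bounded-gradient) regime the nonlinear distance and its linearization about a fixed frame are comparable. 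In our setting this is routine since Lemma~\ref{lemma:interpol-rigidity} is only applied together with rigidity estimates, but to keep the statement self-contained one argues: with $\hat R$ the rotation closest to $\bar\nabla w(x)$ in the sense of $\SO(3)Z$, $\dist^2(\nabla\tilde w(\xi),\SO(3)) \le |\nabla\tilde w(\xi)-\hat R|^2$ pointwise, and then Lemma~\ref{lemma:interpol-Vergleich} gives $\eps_n^{-3}\int_Q|\nabla\tilde w-\hat R|^2 \le C|\bar\nabla w(x)-\hat R Z|^2 = C\dist^2(\bar\nabla w(x),\SO(3)Z)$, which is the remaining inequality. Both directions together yield the claim.
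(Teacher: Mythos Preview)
Your argument for the upper bound $\eps_n^{-3}\int_Q \dist^2(\nabla\tilde w,\SO(3))\,d\xi \le C\dist^2(\bar\nabla w(x),\SO(3)Z)$ is correct: choose $\hat R$ realizing the discrete distance, use $\dist(\nabla\tilde w(\xi),\SO(3))\le|\nabla\tilde w(\xi)-\hat R|$ pointwise, and apply Lemma~\ref{lemma:interpol-Vergleich} to $\tilde w - \hat R(\cdot)$. However, the lower bound $c\,\dist^2(\bar\nabla w(x),\SO(3)Z)\le \eps_n^{-3}\int_Q \dist^2(\nabla\tilde w,\SO(3))\,d\xi$ is not established. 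In your first paragraph you correctly obtain $|\bar\nabla w(x)-RZ|^2\le C\int_Q|\nabla\tilde w-R|^2$ for every fixed $R$, but you cannot pass from $\min_R\int_Q|\nabla\tilde w-R|^2$ down to $\int_Q\dist^2(\nabla\tilde w,\SO(3))$; the inequality goes the wrong way. You flag this gap in your third paragraph, but the ``self-contained'' argument you then give is literally the upper-bound argument again (it concludes $\int_Q\dist^2\le C\dist^2(\bar\nabla w)$, not the reverse), so the lower bound remains unproved.

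The missing ingredient is precisely a geometric rigidity estimate on the cell: one needs $\int_Q\dist^2(\nabla\tilde w,\SO(3))\,d\xi \ge c\min_{R\in\SO(3)}\int_Q|\nabla\tilde w-R|^2\,d\xi$, after which Lemma~\ref{lemma:interpol-Vergleich} applied to $\tilde w - R(\cdot)$ finishes the job. This is exactly what the paper does, invoking \cite[Theorem~3.1]{FJM:02} (in fact an elementary version thereof, since $\tilde w$ ranges over a finite-dimensional space of piecewise affine maps on the unit cube). Your hand-waving about ``standard selection'' or ``linearization in the small-gradient regime'' does not substitute for this: the statement is meant to hold for all $\tilde w$, not only those with gradient close to $\SO(3)$, and no measurable selection of pointwise minimizers produces a single constant rotation.
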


This is in fact \cite[Lemma 3.6]{schmidtlinelast}. We include a simplified proof.

\begin{proof}
After translation and rescaling we may without loss assume that $\eps_n = 1$ and $Q = (0,1)^3$. The geometric rigidity result \cite[Theorem 3.1]{FJM:02} (indeed, an elementary version thereof) yields 
\begin{equation*}
  c \min_{R \in \SO(3)} \| \nabla \tilde{w} - R \|_{L^{2}(Q)}^2 
  \leq \int_Q \dist^2 (\nabla \tilde{w}(\xi), \SO(3)) \, d\xi 
  \leq C \min_{R \in \SO(3)} \| \nabla \tilde{w} - R \|_{L^{2}(Q)}^2. 
\end{equation*}
By definition also 
\begin{align*}
  \dist^2(\bar{\nabla} w(x), \SO(3)Z) 
  = \min_{R \in \SO(3)} | \bar{\nabla} w(x) - RZ |. 
\end{align*}
The claim then follows from applying Lemma \ref{lemma:interpol-Vergleich} to $\xi \mapsto \tilde{w}(\xi) - R\xi$ for each $R \in \SO(3)$. 
\end{proof}

For a sequence $w_n$ of (modified and extended) lattice deformations $w_n : \bar{\Lambda}_n \to \R^3$ with interpolations $\tilde{w}_n : \Omega^{\rm out}_{n} \to \R^3$ and $\bar{w}_n : V_n^{\rm out} \to \R^3$ we consider the rescaled deformations  $\dtilde{y}_n : \tilde{\Omega}^{\rm out}_n \to \R^3$ defined by 
$$ \dtilde{y}_n(x) := \tilde{w}_n(H_n x) 
   \quad\mbox{with}\quad 
   \tilde{\Omega}^{\rm out}_n := H_n^{-1} \Omega^{\rm out}_n $$ 
and $\dbar{y}_n : \tilde{V}^{\rm out}_n \to \R^3$ defined by 
$$ \dbar{y}_n(x) := \bar{w}_n(H_n x) 
   \quad\mbox{with}\quad 
   \tilde{V}^{\rm out}_n := H_n^{-1} V^{\rm out}_n. $$ 
(Later we will normalize by a rigid change of coordinates to obtain $\tilde{y}_n$ and $\bar{y}_n$.) Their rescaled (discrete) gradients are 
$$ \nabla_n \dtilde{y}_n(x) := \nabla \tilde{w}_n(H_n x) 
   \quad\mbox{and}\quad 
   \bar{\nabla}_n \dbar{y}_n(x) := \bar{\nabla} \bar{w}_n(H_n x) $$ 
for all $x \in\tilde{\Omega}^{\rm out}_{n}$. Finally, the force $f_n$ after extension to $\bar{\Lambda}_n$ is assumed to satisfy 
\begin{align}\label{eq:force-bdry0}
  f_n(x) = 0 
  \quad \text{for}\quad 
  x \in \bar{\Lambda}_n \setminus \Lambda_n^{\circ}
\end{align}
and its the piecewise constant interpolation is $\bar{f}_n : \tilde{V}^{\rm out}_n \to \R^3$.

\begin{remark}\label{rmk:convergence-notions}
Suppose $\nu_n = \nu$ constant. We note that for a  sequence of mappings $y_n : \Lambda_n \to \R^3$, if $\dtilde{y}_n \to y$ in $L^2(\Omega; \R^3)$ then $y$ is continuous in $x_3$ and affine in $x_3$ on the intervals $(\frac{i-1}{\nu-1}, \frac{i}{\nu-1})$, $i = 1, \ldots, \nu$. Similarly, if $\dbar{y}_n \to y^*$ in $L^2(S \times (\frac{-1}{2(\nu-1)}, \frac{2\nu -1}{2(\nu-1)}); \R^3)$, then $y^*$ is constant in $x_3$ on the intervals $(\frac{2i-1}{2(\nu-1)}, \frac{2i + 1}{2(\nu-1)})$, $i = 0, \ldots, \nu -1$. 

Suppose $y, y^* \in L^2(\Omega; \R^3)$ are piecewise affine, respectively, constant in $x_3$ as detailed above with $y^*(x',x_3) = y(x',\frac{i}{\nu-1})$ if $x_3 \in (\frac{2i-1}{2(\nu-1)}, \frac{2i + 1}{2(\nu-1)})$, $i = 0, \ldots, \nu -1$. It is not hard to see that the following are equivalent. 
\begin{itemize}
\item $\dtilde{y} \to y$ in $L^2(\Omega; \R^3)$. 
\item $\dbar{y} \to y^*$ in $L^2(S \times (\frac{-1}{2(\nu-1)}, \frac{2\nu -1}{2(\nu-1)}); \R^3)$.   
\item $\frac{\eps_n^3}{h_n} \sum_{x \in \tilde{\Lambda}_n} | y_n(x) - \dashint_{x + (-\frac{\eps_n}{2},\frac{\eps_n}{2})^2 \times (-\frac{\eps_n}{2h_n},\frac{\eps_n}{2h_n})} y^*(\xi) \, d\xi|^2 \to 0$. 
\end{itemize} 
The same is true in case $\nu_n \to \infty$ for $y = y^*$ if in the second statement $S \times (\frac{-1}{2(\nu-1)}, \frac{2\nu -1}{2(\nu-1)})$ is replaced by $\Omega$. 

In particular, limiting deformations do not depend on the interpolation scheme. 
\end{remark}

%-------------------------------------------------------------------------
\section{Proofs}\label{section:Proofs}
\subsection{Compactness}

For the compactness we will heavily use the corresponding continuum rigidity theorem from \cite[Theorem 3]{fjm02fvK} and \cite[Theorem 6]{FJM:06}:

\begin{theorem} \label{thm:platerigidity}
Let $y \in W^{1,2}(\Omega;\R^3)$ and set $\mathcal{I}= \mathcal{I}(y)= \int_\Omega \dist^2(\nabla_n y, \SO(3))\,dx$. Then there exists maps $R:S \to \SO(3)$ and $\tilde{R} \in W^{1,2}(S; \R^{3 \times 3})$ with $\lvert \tilde{R} \rvert \leq C$, and a constant $R^* \in \SO(3)$ such that
\begin{align}
\lVert \nabla_n y - R \lVert^2_{L^2(\Omega)} &\leq C \mathcal{I},\\
\lVert R - \tilde{R} \lVert^2_{L^2(S)} &\leq C \mathcal{I},\\
\lVert \nabla\tilde{R} \lVert^2_{L^2(S)} &\leq \frac{C \mathcal{I}}{h_n^2},\\
\lVert \nabla_n y - R^*  \lVert^2_{L^2(\Omega)} &\leq \frac{C \mathcal{I}}{h_n^2},\\
\lVert R - R^* \lVert^2_{L^p(S)} &\leq \frac{C_p \mathcal{I}}{h_n^2},\ \forall p<\infty.
\end{align}
Crucially, none of the constants depend on $n$, $y$, or $\mathcal{I}$.
\end{theorem}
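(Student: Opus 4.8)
The plan is to reduce Theorem~\ref{thm:platerigidity} to the geometric rigidity estimate of Friesecke--James--Müller \cite{FJM:02} applied on a suitable mesoscale decomposition of $\Omega$, exactly as in the continuum von-Kármán derivations \cite{fjm02fvK,FJM:06}. The only subtlety here is that we work with the \emph{anisotropically} rescaled gradient $\nabla_n y = \nabla y \, H_n^{-1}$ on the fixed domain $\Omega = S \times (0,1)$, so distances are measured to $\SO(3)$ rather than to $\SO(3)Z$; this is purely a change of variables away from the physical domain $\Omega_n = S \times (0,h_n)$ of thickness $h_n$, on which ordinary geometric rigidity applies.

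First I would partition $S$ into dyadic squares $S_{a}$ of sidelength comparable to $h_n$ (with controlled overlap, and handling the Lipschitz boundary of $S$ by a standard covering argument), so that each $Q_a := S_a \times (0,1)$, after undoing the rescaling by $H_n$, becomes a cube of sidelength $\sim h_n$ in $\Omega_n$. On each such cube the rigidity theorem \cite[Theorem~3.1]{FJM:02} produces a rotation $R_a \in \SO(3)$ with $\int_{Q_a} |\nabla_n y - R_a|^2 \le C \int_{Q_a} \dist^2(\nabla_n y, \SO(3))$, the constant being scale-invariant. I would then define $R$ to be the piecewise-constant map equal to $R_a$ on $S_a$, giving the first estimate after summing over $a$. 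Next, comparing rotations $R_a$, $R_{a'}$ on adjacent squares and invoking rigidity on the union $Q_a \cup Q_{a'}$ yields $|R_a - R_{a'}|^2 \lesssim h_n^{-2}\big(\int_{Q_a} + \int_{Q_{a'}}\big)\dist^2(\nabla_n y,\SO(3))$ after rescaling (the factor $h_n^{-2}$ comes from the volume $\sim h_n^3$ of a mesoscale cube divided out against an $L^\infty$-type bound, i.e.\ the usual Poincaré-type loss). Mollifying $R$ at scale $h_n$ produces $\tilde R \in W^{1,2}(S;\R^{3\times 3})$ with $|\tilde R|\le C$, $\|R-\tilde R\|_{L^2(S)}^2 \le C\mathcal I$ and $\|\nabla \tilde R\|_{L^2(S)}^2 \le C\mathcal I/h_n^2$, which is the content of the second and third estimates. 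The bound $\|\nabla_n y - R^*\|_{L^2}^2 \le C\mathcal I/h_n^2$ follows by taking $R^*$ to be a fixed value of $\tilde R$ (or its average) and combining the first three estimates with a Poincaré inequality on $S$; the $L^p$ estimate for $R - R^*$ then comes from the $W^{1,2}\hookrightarrow L^p$ Sobolev embedding in two dimensions applied to $\tilde R - R^*$, picking up the $p$-dependent constant $C_p$, together with the $L^2$ control of $R - \tilde R$.

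The step I expect to be the main obstacle is the passage from the local rotations $R_a$ to the global constant $R^*$ with the sharp $h_n^{-2}$ scaling: one must chain the adjacent-square comparisons across $S$ without accumulating extra powers of the number of squares, which is where the connectedness and Lipschitz regularity of $S$ (already assumed) enter, via a discrete Poincaré inequality on the adjacency graph of the mesoscale squares. Once the $W^{1,2}$ bound on $\tilde R$ with the $h_n^{-2}$ constant is in hand, everything else is bookkeeping. In fact, since Theorem~\ref{thm:platerigidity} is quoted verbatim from \cite[Theorem~3]{fjm02fvK} and \cite[Theorem~6]{FJM:06}, the cleanest route is simply to apply those results to the rescaled map on $\Omega_n$ and translate the statement back to $\Omega$ through $H_n$; I would only spell out the mesoscale argument above if a self-contained proof is wanted.
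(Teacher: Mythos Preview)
Your proposal is correct, and in fact the paper does not prove this theorem at all: it is stated as a direct quotation of \cite[Theorem~3]{fjm02fvK} and \cite[Theorem~6]{FJM:06}, with no proof given. Your final remark---that the cleanest route is simply to invoke those results after undoing the rescaling by $H_n$---is exactly what the paper does, and your preceding sketch of the mesoscale-cube argument accurately reproduces the standard Friesecke--James--M\"uller strategy underlying those references.
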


Furthermore, we will also use the continuum compactness result \cite[Lemmas~4 and 5]{fjm02fvK} and \cite[Lemma~1, Eq.~(96), and Lemma~2]{FJM:06} based on the previous rigidity result applied to some sequence $(\hat{y}_n)$. 

\begin{theorem} \label{thm:contcomp}
Let $\hat{y}_n \in W^{1,2}(\Omega;\R^3)$ with $\mathcal{I}(\hat{y}_n)\leq Ch_n^4$. Then there are $R^*_n\in \SO(3)$, $c_n \in \R^3$ as well as a $u \in W^{1,2}(S;\R^2)$ and a $v \in W^{2,2}(S)$ such that $y_n = {R^*_n}^T \hat{y}_n -c_n$ satisfies
\begin{align}
\lVert \nabla_n y_n - R_n \lVert^2_{L^2(\Omega)} &\leq Ch_n^4 \label{eq:ty-closeto-SO}\\
\lVert R_n - \tilde{R}_n \lVert^2_{L^2(S)} &\leq Ch_n^4\\
\lVert \nabla\tilde{R} \lVert^2_{L^2(S)} &\leq Ch_n^2\\
\lVert \nabla_n y_n - \Id  \lVert^2_{L^2(\Omega)} &\leq Ch_n^2\label{eq:grad-yn-close-to-Id}\\
\int_\Omega (\nabla_n y_n)_{12} - (\nabla_n y_n)_{21}\,dx &=0. \label{eq:ty-no-skew}
\end{align}
And, up to extracting subsequences, 
\begin{align}
\frac{1}{h_n^2}\int_0^1 y_n' - x' \,dx_3=:u_n &\wto u \text{ in } W^{1,2}(S;\R^2),\ i=1,2, \label{eq:un-conv}\\
\frac{1}{h_n}\int_0^1 (y_n)_3  \,dx_3=:v_n &\to v \text{ in } W^{1,2}(S;\R),\label{eq:vn-conv}
\end{align}
\begin{align}
\frac{\nabla_n y_n - \Id}{h_n} =:A_n &\to A = e_3 \otimes \nabla' v - \nabla' v \otimes e_3 \text{ in } L^2(\Omega; \R^{3 \times 3}),\label{eq:grad-yn-Id-A}\\
2\frac{\sym( R_n - \Id)}{h_n^2} &\to A^2 \text{ in } L^p(S; \R^{3 \times 3}),\ \forall p<\infty \label{eq:sym-Rn-Id}, 
\end{align}
\begin{align}\label{eq:cont-strain-convergence}
\frac{R_n^T \nabla_n y_n - \Id}{h_n^2} 
\wto G \text{ in } L^2(\Omega; \R^{3 \times 3}), 
\end{align}
where the upper left $2 \times 2$ submatrix $G''$ of $G$ is given by 
\begin{align}\label{eq:cont-strain-identify-i}
  G''(x) 
  &= G_1(x') + (x_3 - \tfrac{1}{2}) G_2(x'), 
\end{align}
with 
\begin{align}\label{eq:cont-strain-identify-ii}
  \sym G_1 
  &= \tfrac{1}{2} ( \nabla' u + (\nabla u)^T ) + \nabla 'v \otimes \nabla ' v, \quad 
  G_2 
  = - (\nabla')^2 v.
\end{align}
\end{theorem}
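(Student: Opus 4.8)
The plan is to deduce the entire statement from the geometric rigidity estimate of Theorem~\ref{thm:platerigidity} applied to $(\hat y_n)$, followed by the argument of Friesecke, James and Müller; after rescaling, \eqref{eq:ty-closeto-SO}--\eqref{eq:cont-strain-identify-ii} are precisely the content of \cite[Lemmas~4 and 5]{fjm02fvK} and \cite[Lemma~1, Eq.~(96), and Lemma~2]{FJM:06}, so most of what follows is bookkeeping. First I would apply Theorem~\ref{thm:platerigidity} to $y = \hat y_n$; since $\mathcal I(\hat y_n) \le C h_n^4$ this produces $R_n \colon S \to \SO(3)$, $\tilde R_n \in W^{1,2}(S;\R^{3 \times 3})$ with $|\tilde R_n| \le C$, and $R^*_n \in \SO(3)$ satisfying $\|\nabla_n \hat y_n - R_n\|_{L^2}^2 \le C h_n^4$, $\|R_n - \tilde R_n\|_{L^2}^2 \le C h_n^4$, $\|\nabla \tilde R_n\|_{L^2}^2 \le C h_n^2$, $\|\nabla_n \hat y_n - R^*_n\|_{L^2}^2 \le C h_n^2$ and $\|R_n - R^*_n\|_{L^p}^2 \le C_p h_n^2$ for all $p < \infty$.

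Next I would normalise. Replacing $\hat y_n, R_n, \tilde R_n$ by $(R^*_n)^T \hat y_n, (R^*_n)^T R_n, (R^*_n)^T \tilde R_n$ one may assume $R^*_n = \Id$ without affecting any of the above estimates. Since then $\int_\Omega [(\nabla_n \hat y_n)_{12} - (\nabla_n \hat y_n)_{21}]\,dx = O(h_n)$ while $\int_\Omega [(\nabla_n \hat y_n)_{11} + (\nabla_n \hat y_n)_{22}]\,dx \to 2|\Omega|$, there is a rotation $\Theta_n$ in the $(e_1,e_2)$-plane with $|\Theta_n - \Id| \le C h_n$ for which passing to $\Theta_n^T \hat y_n$ enforces \eqref{eq:ty-no-skew}, still without changing the orders of magnitude above; and subtracting a suitable $c_n \in \R^3$ (so that $\int_\Omega [y_n - (x',0)]\,dx = 0$, which also gives $\int_S u_n = \int_S v_n = 0$ for the quantities $u_n, v_n$ of \eqref{eq:un-conv}, \eqref{eq:vn-conv}) removes the translational ambiguity. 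For the resulting $y_n$, estimates \eqref{eq:ty-closeto-SO}--\eqref{eq:grad-yn-close-to-Id} are then immediate.

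Then I would extract the scaled limits. From $\|\tilde R_n - \Id\|_{L^2} \le C h_n$ and $\|\nabla \tilde R_n\|_{L^2} \le C h_n$ the sequence $h_n^{-1}(\tilde R_n - \Id)$ is bounded in $W^{1,2}(S;\R^{3 \times 3})$; along a subsequence it converges weakly in $W^{1,2}$ and strongly in every $L^q(S)$, $q < \infty$, to some $x_3$-independent $A \in W^{1,2}(S;\R^{3 \times 3})$. As the first two summands in
\[
  A_n := \frac{\nabla_n y_n - \Id}{h_n} = \frac{\nabla_n y_n - R_n}{h_n} + \frac{R_n - \tilde R_n}{h_n} + \frac{\tilde R_n - \Id}{h_n}
\]
tend to $0$ in $L^2(\Omega)$, this gives $A_n \to A$ strongly in $L^2(\Omega)$. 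Passing to the limit in $R_n^T R_n = \Id$ shows $A + A^T = 0$; its upper-left $2 \times 2$ block $A''$ is moreover $x_3$-independent and a distributional $x'$-gradient (of the $W^{1,2}$-limit of $h_n^{-1}((y_n)_{1,2} - x')$), hence a constant infinitesimal rotation, which \eqref{eq:ty-no-skew} forces to vanish. Thus $A = e_3 \otimes \nabla' v - \nabla' v \otimes e_3$ with $v := \lim_n v_n$, $v_n := h_n^{-1}\int_0^1 (y_n)_3\,dx_3$, the convergence being strong in $W^{1,2}(S)$ because $\nabla' v_n = \int_0^1 ((A_n)_{31},(A_n)_{32})\,dx_3 \to (A_{31},A_{32})$ in $L^2(S)$ and $\int_S v_n = 0$; in particular $\nabla' v = (A_{31},A_{32}) \in W^{1,2}(S;\R^2)$, so $v \in W^{2,2}(S)$. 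This is \eqref{eq:vn-conv}, \eqref{eq:grad-yn-Id-A}. A second-order Taylor expansion of $R \mapsto \sym(R - \Id)$ on $\SO(3)$ at $\Id$ together with the rigidity estimates yields \eqref{eq:sym-Rn-Id}. For \eqref{eq:un-conv}: $\sym(\nabla_n y_n - \Id)$ is $O(h_n^2)$ in $L^2$ --- its $(\nabla_n y_n - R_n)$-part by the rigidity estimate, its $(R_n - \Id)$-part because $R_n \in \SO(3)$ lies within $O(h_n)$ of $\Id$ so that $\sym(R_n - \Id) = O(|R_n - \Id|^2)$ --- hence $\sym \nabla' u_n$ is bounded in $L^2(S)$; since $\nabla' u_n$ is a gradient, Korn's inequality together with \eqref{eq:ty-no-skew} (which kills the average skew part of $\nabla' u_n$) bounds $u_n$ in $W^{1,2}(S;\R^2)$, and one extracts $u_n \wto u$.

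Finally I would identify the limiting strain. Set $G_n := h_n^{-2}(R_n^T \nabla_n y_n - \Id) = h_n^{-2} R_n^T(\nabla_n y_n - R_n)$, bounded in $L^2(\Omega)$ since $\|\nabla_n y_n - R_n\|_{L^2} \le C h_n^2$, and extract $G_n \wto G$, which is \eqref{eq:cont-strain-convergence}. For the $2 \times 2$ block $G''$, decompose $\sym(\nabla_n y_n - \Id) = \sym(\nabla_n y_n - R_n) + \sym(R_n - \Id)$, divide by $h_n^2$, integrate over $x_3$, and use $h_n^{-2}(\nabla_n y_n - R_n) = R_n G_n \wto G$ together with $h_n^{-2}\sym(R_n - \Id) \to \tfrac12 A^2$ and $(A^2)'' = -\nabla' v \otimes \nabla' v$; this identifies $\int_0^1 \sym G''\,dx_3$ as the von-Kármán membrane strain $\sym G_1$, the sum of $\sym \nabla' u$ and a quadratic term in $\nabla' v$, as recorded in \eqref{eq:cont-strain-identify-ii}. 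That $G''$ has the affine profile \eqref{eq:cont-strain-identify-i}, and that its slope is the linearised curvature $G_2 = -(\nabla')^2 v$, comes from inspecting the $x_3$-derivative of $R_n^T \nabla_n y_n$, i.e.\ from the way in-plane differences of the frame field $R_n$ (controlled at scale $h_n$ by $\nabla \tilde R_n$) enter the strain; this is the substance of \cite[Lemma~2]{FJM:06}. I expect this last step --- that no $x_3$-dependence beyond the affine one survives in the limit, and that its linear coefficient is exactly $-(\nabla')^2 v$ --- to be the main technical obstacle; everything preceding it reduces to Rellich compactness, Korn's inequality, and Taylor expansion of $\SO(3)$ near the identity.
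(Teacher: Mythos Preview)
The paper does not prove Theorem~\ref{thm:contcomp} itself; it is stated as a direct compilation of \cite[Lemmas~4 and 5]{fjm02fvK} and \cite[Lemma~1, Eq.~(96), and Lemma~2]{FJM:06}, quoted without proof. Your proposal correctly sketches the argument of those references (rigidity, normalisation by $R^*_n$ and a small planar rotation to enforce \eqref{eq:ty-no-skew}, extraction of $A$ via $h_n^{-1}(\tilde R_n-\Id)$, Korn for $u_n$, and identification of $G''$ via the decomposition $\sym(\nabla_n y_n-\Id)=\sym(\nabla_n y_n-R_n)+\sym(R_n-\Id)$), so the approaches coincide.
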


The following proposition allows us to apply these continuum results.
\begin{proposition}\label{prop:energy-estimates}
In the setting of Theorem \ref{thm:Gammalimit}, consider a sequence $w_n$ with
\begin{equation} \label{eq:seqbddenergy}
 E_n(w_n) \leq C h_n^4
\end{equation}
Then,
\begin{equation} \label{eq:bdonSOdist}
0 \leq \mathcal{I}(\dtilde{y}_n)= \int_\Omega \dist^2(\nabla_n \dtilde{y}_n, \SO(3))\,dx \leq C h_n^4.
\end{equation}
Here, $\dtilde{y}_n\in W^{1,2}(\Omega;\R^3)$ is the rescaled, modified, and interpolated version of $w_n$ according to Section~\ref{sec:preparations}.

In the setting of Theorem \ref{thm:Gammalimit3} the statement remains is true as well, while in the setting of Theorem \ref{thm:Gammalimit2} \eqref{eq:bdonSOdist} is still true but now $\dtilde{y}_n$ is the rescaled, modified, and interpolated version of either $w_n$ or $-w_n$ where the correct sign does depend on $w_n$.
\end{proposition}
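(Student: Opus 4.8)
The plan is to bound $\mathcal{I}(\dtilde y_n)$ cell by cell, using the cell-wise energy lower bounds provided by assumptions {\bf (G)} (or {\bf (NG)}) together with the interpolation comparison of Lemma~\ref{lemma:interpol-rigidity} and the boundary-control estimate of Lemma~\ref{lemma:bdry-control}. First I would split $\Omega^{\rm out}_n$ into the rescaled inner cells $(\tilde\Lambda_n')^\circ$ and the boundary cells $\partial\tilde\Lambda_n'$, so that
\[
\mathcal{I}(\dtilde y_n)
= \sum_{x \in (\tilde\Lambda_n')^\circ} \int_{\tilde Q_n(x)} \dist^2(\nabla_n \dtilde y_n, \SO(3))\,dx
+ \sum_{x \in \partial\tilde\Lambda_n'} \int_{\tilde Q_n(x)} \dist^2(\nabla_n \dtilde y_n, \SO(3))\,dx.
\]
Each rescaled cell $\tilde Q_n(x)$ has volume $\eps_n^3/h_n$, and after undoing the rescaling $H_n$ (a change of variables whose Jacobian is exactly $h_n$) the integral over $\tilde Q_n(x)$ equals $h_n^{-1}$ times the corresponding integral over the unrescaled cell $Q_n(x)$, to which Lemma~\ref{lemma:interpol-rigidity} applies: $\eps_n^{-3}\int_{Q_n(x)} \dist^2(\nabla \tilde w_n, \SO(3))\,d\xi \le C\dist^2(\bar\nabla w_n(x), \SO(3)Z)$. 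Hence each cell contributes at most $C\,\tfrac{\eps_n^3}{h_n}\dist^2(\bar\nabla w_n(x),\SO(3)Z)$.

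Next I would use the growth assumption to dominate the inner-cell sum by the energy. On an inner cell the full cell is occupied, so $W(x,\vec w_n(x)) \ge W_{\rm cell}(\bar\nabla w_n(x)) \ge c_0 \dist^2(\bar\nabla w_n(x),\SO(3)Z)$ by {\bf (G)} (using translation invariance to pass from $\vec w_n$ to $\bar\nabla w_n$, and that $W_{\rm surf}\ge 0$). Therefore
\[
\sum_{x \in (\tilde\Lambda_n')^\circ} \tfrac{\eps_n^3}{h_n}\dist^2(\bar\nabla w_n(x),\SO(3)Z)
\le \tfrac{1}{c_0}\cdot\tfrac{\eps_n^3}{h_n}\sum_{x\in{\Lambda_n'}^\circ} W(x,\vec w_n(x))
\le \tfrac{1}{c_0}\cdot\tfrac{\eps_n^3}{h_n}E_{\rm atom}(w_n).
\]
To control $E_{\rm atom}(w_n)$ by the total energy $E_n(w_n)=\tfrac{\eps_n^3}{h_n}(E_{\rm atom}(w_n)+E_{\rm body}(w_n))$, I would handle the body-force term $E_{\rm body}(w_n)=\sum_x w_n(x)\cdot f_n(x)$: using that $f_n$ vanishes near the lateral boundary (so only inner atoms contribute), has no net force or first moment \eqref{eq:force-bed}, and $h_n^{-3}\bar f_n\to f$ in $L^2$, one absorbs this term via a Young/Poincaré-type estimate against the rigidity of the inner cells — concretely, subtract the rigid motion $R^*_n(x+c_n)$ (which costs nothing in $E_{\rm body}$ by \eqref{eq:force-bed}) and bound the remaining $\sum (w_n - R^*_n(\cdot+c_n))\cdot f_n$ by $\tfrac12\delta \|\cdot\|^2 + C_\delta\|h_n^{-3}\bar f_n\|_{L^2}^2 h_n^{\text{something}}$, where the squared-displacement term is controlled by $\mathcal{I}$ via the continuum rigidity estimate and a discrete Poincaré inequality, and is reabsorbed on the left after choosing $\delta$ small. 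The boundary-cell sum is then handled purely by Lemma~\ref{lemma:bdry-control}, which gives $\sum_{x\in\partial\Lambda_n'}\dist^2(\bar\nabla w_n'(x),\SO(3)Z)\le C\sum_{x\in{\Lambda_n'}^\circ}\dist^2(\bar\nabla w_n'(x),\SO(3)Z)$, i.e. the boundary contributes no more than a constant multiple of the inner part, which is already bounded by $Ch_n^4$ after dividing through. Assembling these gives \eqref{eq:bdonSOdist}.

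The main obstacle is the body-force absorption: one needs the squared displacement $\tfrac{\eps_n^3}{h_n}\sum_x |w_n(x)-R^*_n(x+c_n)|^2$ to be controlled by $\mathcal{I}(\dtilde y_n)$ (times a harmless power of $h_n$), which requires a discrete Poincaré/Korn-type inequality on $\Omega^{\rm in}_n$ with constant independent of $n$, obtained by transferring the continuum estimate $\|\dtilde y_n - R^*_n(\cdot + c_n)\|_{L^2}^2 \le C\,h_n^{-2}\mathcal{I}$ from Theorem~\ref{thm:platerigidity} back to the lattice via the interpolation comparison; keeping track that the resulting power of $h_n$ is favorable enough (here the $h_n^{-3}$-normalization of $f_n$ and the $h_n^4$ energy scale combine correctly) is the delicate bookkeeping. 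For Theorem~\ref{thm:Gammalimit3} the argument is the same once one notes that membership in $\mathcal{S}_\delta$ already forces $\dist(\bar\nabla w_n(x),\SO(3)Z)<\delta$ on inner cells, so the modification procedure does not spoil $\SO(3)$-rigidity and {\bf (G)} is replaced by the local lower bound in {\bf (NG)} restricted to $U$. For Theorem~\ref{thm:Gammalimit2} (no forces, only {\bf (NG)} and the non-penetration term), the difference is that the cell energy only controls $\dist^2(\bar\nabla w_n(x),\Oo(3)Z)$, not $\dist^2(\cdot,\SO(3)Z)$; one argues that the non-penetration term $E_{\rm nonpen}$ prevents a macroscopic fraction of cells from being orientation-reversed relative to their neighbors — more precisely, adjacent cells of opposite orientation would force a pair of distinct lattice atoms to near-coincide at scale $\eps_n$, costing at least $\gamma$, hence at most $O(h_n^4\eps_n^{-3})=o(1)$ many such interfaces in the regime $\nu_n^5\eps_n^2\to0$ — so after possibly replacing $w_n$ by $-w_n$ one may assume the majority orientation is $+$, and then a connectedness/propagation argument along inner cells upgrades the $\Oo(3)$-bound to an $\SO(3)$-bound on all of $\Omega^{\rm in}_n$, giving \eqref{eq:bdonSOdist} for the correct sign.
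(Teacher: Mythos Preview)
Your approach for Theorem~\ref{thm:Gammalimit} is essentially the paper's: interpolation comparison (Lemma~\ref{lemma:interpol-rigidity}) plus boundary control (Lemma~\ref{lemma:bdry-control}) reduce $\mathcal{I}(\dtilde y_n)$ to $\tfrac{\eps_n^3}{h_n}\sum_{x\in{\Lambda_n'}^\circ}\dist^2(\bar\nabla w_n(x),\SO(3)Z)$, which {\bf (G)} bounds by $\tfrac{\eps_n^3}{h_n}E_{\rm atom}(w_n)$; the body-force term is absorbed via the plate rigidity theorem and a discrete Poincar\'e inequality after subtracting a rigid motion (which costs nothing by \eqref{eq:force-bed}). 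The paper packages the absorption as the quadratic inequality $\mathcal{I}_n \le C h_n^4 + C\sqrt{\mathcal{I}_n}\,h_n^2$ rather than a Young-with-small-$\delta$ argument, but the two are equivalent. For Theorem~\ref{thm:Gammalimit3} you correctly note that on $\mathcal{S}_\delta$ the assumptions {\bf (NG)} and {\bf (G)} are locally equivalent.

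For Theorem~\ref{thm:Gammalimit2} there is a gap. First, your counting has the wrong exponent: from $E_n(w_n)\le Ch_n^4$ one gets $E_{\rm atom}+E_{\rm nonpen}\le C h_n^5/\eps_n^3$, not $h_n^4/\eps_n^3$, and it is precisely $h_n^5/\eps_n^3=(\nu_n-1)^5\eps_n^2\to0$ that makes this $o(1)$; your claimed $h_n^4\eps_n^{-3}=o(1)$ is false in general under the stated hypothesis. More importantly, you skip a step the paper uses essentially: before any sign can be assigned to a cell, one must know that \emph{every} inner cell is close to $\Oo(3)Z$. The paper obtains this from per-cell smallness---each $W_{\rm cell}(\bar\nabla w_n(x))\le C h_n^5/\eps_n^3<c_0$ for $n$ large forces $\bar\nabla w_n(x)\in U$---and likewise each pair of atoms satisfies the non-penetration bound pointwise. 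Only then is $\sigma_n(x)\in\{\pm1\}$ well-defined for every inner cell; an explicit computation then shows that adjacent cells with opposite signs would place two distinct atoms within $\delta\eps_n$ of each other, contradicting non-penetration, and connectedness of the inner cells makes $\sigma_n$ globally constant. Your ``majority orientation plus propagation'' phrasing does not close without first establishing these per-cell bounds.
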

\begin{proof}
Rescaling the $w_n$ and applying the modification and interpolation steps from Section \ref{sec:preparations}, we have sequences $\dtilde{y}_n \in W^{1,2}(\Omega;\R^3)$ and $\dbar{y}_n \in L^{2}(\Omega;\R^3)$. In particular, we can use Theorem \ref{thm:platerigidity} for this sequence.

 Take $R^*_n$ according to Theorem~\ref{thm:contcomp}. Then by Lemmas~\ref{lemma:bdry-control} and \ref{lemma:interpol-rigidity},
\[ \frac{\eps_n^3}{h_n}\sum_{ x \in \tilde{\Lambda}_n'} \lvert \bar{\nabla}_n \dbar{y}_n (x) - R^*_n Z \rvert^2 
   \leq C\int_\Omega \lvert \nabla_n \dtilde{y}(x) - R^*_n \rvert^2 \,dx \leq C \frac{\mathcal{I}_n}{h_n^2}.\]
A standard discrete Poincaré-inequality then shows
\[ \frac{\eps_n^3}{h_n}\sum_{ x \in {\tilde{\Lambda}_n}^{\circ}} \Big\lvert \dbar{y}_n(x) - R^*_n \begin{pmatrix}x' \\ hx_3 \end{pmatrix} -  \bar{c}_n \Big\rvert^2 
   \leq\frac{\eps_n^3}{h_n}\sum_{ x \in \tilde{\Lambda}_n'} \lvert \bar{\nabla}_n \dbar{y}_n (x) - R^*_n Z \rvert^2 \leq C \frac{\mathcal{I}_n}{h_n^2} \]
for a suitable $\bar{c}_n \in \R^3$. Now $f_n$ does not depend on $x_3$, vanishes close to $\partial S$ where the modification takes place, and satisfies $\sum_{x \in \Lambda_n} f_n =0$, as well as $\sum_{x \in \Lambda_n} f_n \otimes x' =0$. Hence, we see that
\begin{align*}
  \frac{\eps_n^3}{h_n} E_{\rm body}(w_n) 
  &= \frac{\eps_n^3}{h_n} \sum_{x \in {\tilde{\Lambda}_n}^{\circ}} f_n(x') \cdot y_n(x)\\
  &= \frac{\eps_n^3}{h_n} \sum_{x \in {\tilde{\Lambda}_n}^{\circ}} f_n(x') \cdot \Big(\dbar{y}_n(x) - R^*_n \begin{pmatrix}x' \\ hx_3 \end{pmatrix} - \bar{c}_n \Big).
\end{align*}
Using $\lVert  \bar{f}_n \rVert_{L^2(S)} \leq C h_n^3$ and abbreviating $\mathcal{I}(\dtilde{y}_n) = \mathcal{I}_n$, we thus find
\[\Big\lvert \frac{\eps_n^3}{h_n} E_{\rm body}(w_n) \Big\rvert \leq C \sqrt{\mathcal{I}_n} h_n^2.\]
On the other hand, due to ${\bf (G)}$  and Lemmas~\ref{lemma:bdry-control} and \ref{lemma:interpol-rigidity} we have
\begin{align*}
  \frac{\eps_n^3}{h_n} E_{\rm atom}(w_n) 
  &\geq c_0 \frac{\eps_n^3}{h_n} \sum_{ x \in ({\tilde{\Lambda}_n}')^{\circ}} \dist^2(\bar{\nabla}_n  y_n  (x),\SO(3)Z) \\
  &\geq c \frac{\eps_n^3}{h_n} \sum_{ x \in \tilde{\Lambda}'_n} \dist^2(\bar{\nabla}_n \dbar{y}_n (x),\SO(3)Z) 
   \geq c \mathcal{I}_n.
\end{align*}
Hence,
\begin{align*}
  0 
  \leq \mathcal{I}_n &\leq C \frac{\eps_n^3}{h_n} E_{\rm atom}(w_n)
  \leq C h_n^4 + C\frac{\eps_n^3}{h_n} \lvert E_{\rm body}(w_n) \rvert
   \leq C h_n^4 + C \sqrt{\mathcal{I}_n} h_n^2.
\end{align*}
We thus have
\begin{equation*}
0 \leq \mathcal{I}_n \leq C h_n^4.
\end{equation*}

All these statements remain true in the setting of Theorem \ref{thm:Gammalimit3} as the Assumptions ${\bf (G)}$ and ${\bf (NG)}$ are equivalent on $\mathcal{S}_\delta$. 

Now, consider the setting of Theorem \ref{thm:Gammalimit2} with Assumption ${\bf (NG)}$ instead of ${\bf (G)}$, as well as $f_n=0$ and $\nu_n^5 \eps_n^2 \to 0$ with the energy given by \eqref{eq:energywithnonpen}. Using \eqref{eq:seqbddenergy}, we find
\[0 \leq W_{\rm cell}(\bar{\nabla} w(x)) \leq C \frac{h_n^5}{\eps_n^3}\]
for every $x \in {\Lambda'_n}^{\circ}$ and
\[0 \leq V\Big(\frac{w_n(\bar{x})}{\eps_n},\frac{w_n(\dbar{x})}{\eps_n}\Big) \leq C \frac{h_n^5}{\eps_n^3}\]
for all $\bar{x}, \dbar{x} \in \Lambda_n$. As $\frac{h_n^5}{\eps_n^3} \to 0$, for $n$ large enough, the right hand side is strictly smaller then $c_0$ or $\gamma$, respectively. Therefore, for all $n$ large enough we have
\[\bar{\nabla} w_n(x) \in U  \text{ for all } x \in {\Lambda'_n}^{\circ}\]
and
\begin{equation} \label{eq:nonpen}
\lvert w_n(\bar{x}) - w_n( \dbar{x} ) \rvert > \eps_n \delta
\end{equation}
for all $\bar{x}, \dbar{x} \in \Lambda_n$.

$\bar{\nabla} w_n(x) \in U $ implies $W_{\rm cell}(\bar{\nabla} w_n(x)) \geq c_0 \dist^2(\bar{\nabla} w_n(x), \Oo(3)Z) $. In particular, we thus find
\[\dist^2(\bar{\nabla} w_n(x), \Oo(3)Z) \leq C \frac{h_n^5}{\eps_n^3}.\]
Again, for $n$ large enough, this means that every $x\in {\Lambda'_n}^{\circ}$ the discrete gradient $\bar{\nabla} w_n(x)$ is arbitrarily close to $\Oo(3)Z$ and thus very close to $\sigma_n(x)\SO(3)Z$ with a unique $\sigma_n(x) \in \{\pm 1\}$. We now want to show that the sign $\sigma_n(x)$ is the same for all $x$ in the interior cells. As the interior of the union of all these cells is connected, it suffices to show that $\sigma_n$ is the same on any two cells that share a $(d-1)$-face. Indeed, if that were false, we would have some $x,x'$ in cells that share a $(d-1)$-face such that \[\dist^2(\bar{\nabla} w_n(x), \Oo(3)Z)= \lvert \bar{\nabla} w_n(x) - QZ \rvert^2 \leq C \frac{h_n^5}{\eps_n^3},\]
and
\[\dist^2(\bar{\nabla} w_n(x'), \Oo(3)Z)= \lvert \bar{\nabla} w_n(x') + Q'Z \rvert^2 \leq C \frac{h_n^5}{\eps_n^3},\]
with $Q,Q' \in \SO(3)$. Without loss of generality assume $x=x' + \eps_n e_3$. Then\[\bar{\nabla} w_n(x') (0, b)^T = \bar{\nabla} w_n(x) (b, 0)^T\]
for all $b\in \R^4$ with $\sum_i b_i =0$. In particular choosing $b = (-1,+1,+1,-1)$ and $b = (-1,-1,+1,+1)$, we get $\lvert (Q+ Q') e_i \rvert \leq C \frac{h_n^5}{\eps_n^3}$ for $i=1,2$. As $Q,Q' \in \SO(3)$, we find $\lvert (Q- Q') e_3 \rvert \leq C \frac{h_n^5}{\eps_n^3}$. Overall, we see that both deformed cells are almost on top of each other. More specifically,
\begin{align*}
&\lvert w(x'+\eps_n z^1) - w(x + \eps_n z^5) \rvert\\
&~~= \lvert w(x+\eps_n z^5) - w(x+\eps_n z^1) + w(x'+\eps_n z^5) - w(x'+\eps_n z^1) \rvert \\
&~~\leq \eps_n \Big( \lvert Q z^5 - Qz^1 - Q' z^1 + Q'z^5 \rvert +C \frac{h_n^5}{\eps_n^3} \Big) \\
&~~= \eps_n \Big( \lvert (Q  - Q') e_3 \rvert +C \frac{h_n^5}{\eps_n^3} \Big) 
  \leq \eps_n C \frac{h_n^5}{\eps_n^3} 
  \leq \delta \eps_n
\end{align*}
for $n$ large enough. This is a contradiction to the non-penetration condition \eqref{eq:nonpen}.

That means, we have
\begin{equation}
\frac{\eps_n^3}{h_n}\sum_{ x \in {\Lambda'_n}^{\circ}} \dist^2(\sigma_n \bar{\nabla} w_n(x), \SO(3)Z) \leq C h_n^4 \nonumber
\end{equation}
for an $x$-independent $\sigma_n \in \{\pm 1\}$.
Applying the modification and interpolation procedure from Section~\ref{sec:preparations} to $\sigma_n w_n$ as in the case {\bf (G)} above, we find
\[
\int_\Omega \dist^2( \nabla_n \dtilde{y}(x), \SO(3)Z)\,dx \leq C h_n^4. \qedhere \]
\end{proof}

Now we can directly apply Theorems \ref{thm:platerigidity} and \ref{thm:contcomp} for the continuum objects $\dtilde{y}_n$. In particular, for $\tilde{y}_n = {R^*_n}^T \dtilde{y}_n - c_n$ as defined in \eqref{eq:yn-tilde-def} and corresponding $u_n$ and $v_n$ as in \eqref{eq:un-def}, respectively, \eqref{eq:vn-def}, after extracting a subsequence from \eqref{eq:un-conv} and \eqref{eq:vn-conv} we get that 
\begin{align}\label{eq:unvn-conv} 
  u_n \wto u \text{ in } W^{1,2}(S;\R^2), \qquad 
  v_n \to v \text{ in } W^{1,2}(S;\R).
\end{align} 
For later we also introduce $\bar{y}_n = {R^*_n}^T \dbar{y}_n - c_n$. 

We will also use the following finer statement.
\begin{proposition}\label{prop:finer-statement}
In the setting of Theorem \ref{thm:contcomp}, applied to $\dtilde{y}_n$ and with $\tilde{y}_n = {R^*_n}^T \dtilde{y}_n - c_n$, we have
\begin{align}
\frac{1}{h_n^2} \big((\tilde{y}_n)' - x'\big)=:\hat{u}_n &\wto \hat{u} \text{ in } W^{1,2}(\Omega;\R^2), \label{eq:hat-u-conv} \\
\frac{1}{h_n} (\tilde{y}_n)_3=:\hat{v}_n &\wto \hat{v} \text{ in } W^{1,2}(\Omega), \label{eq:hat-v-conv} 
\end{align}
where
\begin{align}
\hat{u}(x)&=u(x') - (x_3-\tfrac{1}{2}) \nabla' v(x'),  \label{eq:hat-u-form}\\
\hat{v}(x)&=v(x') + (x_3-\tfrac{1}{2}). \label{eq:hat-v-form}
\end{align}
\end{proposition}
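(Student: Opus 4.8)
The plan is to deduce Proposition~\ref{prop:finer-statement} from the already-available convergence statements in Theorem~\ref{thm:contcomp}, upgrading the $x_3$-averaged convergences \eqref{eq:un-conv}, \eqref{eq:vn-conv} to convergences of the full functions on $\Omega$, and then identifying the $x_3$-dependence of the limits. The key input is \eqref{eq:grad-yn-Id-A}: the rescaled strain $A_n = (\nabla_n \tilde y_n - \Id)/h_n$ converges in $L^2(\Omega;\R^{3\times 3})$ to $A = e_3 \otimes \nabla' v - \nabla' v \otimes e_3$. In terms of the unscaled gradient this says $\nabla \tilde y_n \to \Id$ and, more precisely, $\partial_j (\tilde y_n)_i = \delta_{ij} + h_n (A_n)_{ij}$ for $j=1,2$ while $\partial_3 (\tilde y_n)_i = h_n\big(\delta_{i3} + h_n (A_n)_{i3}\big)$ because $\nabla_n = \nabla H_n^{-1}$ carries a factor $1/h_n$ in the third column.

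First I would treat $\hat v_n = (\tilde y_n)_3/h_n$. From the above, $\nabla' \hat v_n = (A_n)_{3\cdot}'/1 = $ (the first two components of row $3$ of $A_n$) $\to (A)_{31}, (A)_{32}) = -\nabla' v$ wait — I must be careful: $\nabla' \hat v_n = \frac1{h_n}\nabla'(\tilde y_n)_3 = (A_n)_{3,(1,2)} \to A_{3,(1,2)} = \nabla' v$ since $A = e_3\otimes\nabla' v - \nabla' v \otimes e_3$ has $(A)_{3j} = \partial_j v$. Also $\partial_3 \hat v_n = \frac1{h_n}\partial_3(\tilde y_n)_3 = 1 + h_n (A_n)_{33} \to 1$ in $L^2(\Omega)$. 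Hence $\nabla \hat v_n$ is bounded in $L^2(\Omega;\R^3)$ and converges in $L^2$ to $(\nabla' v, 1)$; combined with the already known $L^2$-convergence of the average $v_n = \int_0^1 \hat v_n\,dx_3 \to v$ (from \eqref{eq:vn-conv}, interpreting $(y_n)_3$ as $(\tilde y_n)_3$ via \eqref{eq:vn-def}), the Poincaré inequality on $\Omega$ gives $\hat v_n \wto \hat v$ in $W^{1,2}(\Omega)$ with $\nabla \hat v = (\nabla' v, 1)$ and $\int_0^1 \hat v\,dx_3 = v$. Integrating $\partial_3\hat v = 1$ yields $\hat v(x) = v(x') + (x_3 - \tfrac12)$, which is \eqref{eq:hat-v-form}.

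Next, for $\hat u_n = ((\tilde y_n)' - x')/h_n^2$: the in-plane gradient is $\nabla'\hat u_n = \frac1{h_n^2}(\nabla'(\tilde y_n)' - \Id_{2\times 2})$. This is not directly controlled by \eqref{eq:grad-yn-Id-A} (which only gives the first-order term $h_n A_n$, and $(A)_{ij} = 0$ for $i,j\in\{1,2\}$, consistent with $(\tilde y_n)' - x'$ being $O(h_n^2)$), so the right tool is \eqref{eq:cont-strain-convergence}–\eqref{eq:cont-strain-identify-i}: $(R_n^T \nabla_n \tilde y_n - \Id)/h_n^2 \wto G$ with $G'' = G_1 + (x_3-\tfrac12)G_2$. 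Expanding $R_n^T \nabla_n \tilde y_n = R_n^T(\nabla'\tilde y_n, \tfrac1{h_n}\partial_3\tilde y_n)$ and using \eqref{eq:sym-Rn-Id} ($\sym(R_n-\Id)/h_n^2 \to \tfrac12 A^2$) together with $(R_n - \Id)/h_n \to A$, one extracts that $\nabla'\hat u_n = \frac1{h_n^2}(\nabla'(\tilde y_n)' - \Id)$ is bounded in $L^2$ and converges weakly; its weak limit $\nabla'\hat u$ must have symmetric part determined by the $(1,2)$-block of the identity $\sym(G) = \sym(\nabla_n\tilde y_n - \Id)/h_n^2 - \tfrac12 A^2 + o(1)$, and the skew part is pinned down by the combination of the $x_3$-dependence and the out-of-plane components. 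The cleanest route: write $\partial_3 \hat u_n = \frac1{h_n^2}\partial_3(\tilde y_n)' = \frac1{h_n}\big((\nabla_n\tilde y_n)_{(1,2),3}\big) = \frac1{h_n}\cdot h_n^{-1}\cdot$ hmm — rather, $\partial_3(\tilde y_n)_i = h_n(\nabla_n\tilde y_n)_{i3}$ for $i=1,2$, so $\partial_3\hat u_n = \frac1{h_n}(\nabla_n\tilde y_n)_{(1,2),3} = \frac1{h_n}(A_n)_{(1,2),3} \to \frac1{?}$ — no: $(\nabla_n\tilde y_n)_{i3} = (\Id)_{i3} + h_n(A_n)_{i3} = h_n(A_n)_{i3}$ for $i=1,2$, so $\partial_3\hat u_n = (A_n)_{(1,2),3} \to A_{(1,2),3} = -\nabla' v$. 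Thus $\partial_3\hat u = -\nabla' v$, and integrating, $\hat u(x) = (\text{fn of }x') - (x_3-\tfrac12)\nabla'v$; matching the average with \eqref{eq:un-conv} gives the $x'$-function equal to $u$, which is \eqref{eq:hat-u-form}, and $\nabla'\hat u = \nabla' u - (x_3-\tfrac12)(\nabla')^2 v$ is consistent with $\sym G_1, G_2$ in \eqref{eq:cont-strain-identify-ii}. Weak $W^{1,2}(\Omega)$ convergence then follows from the $L^2$-boundedness of $\nabla\hat u_n$ (both the in-plane part, bounded via \eqref{eq:cont-strain-convergence}, and the now-identified $\partial_3$ part) together with \eqref{eq:un-conv} and Poincaré.

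The main obstacle is the in-plane gradient bound: controlling $\nabla'\hat u_n = h_n^{-2}(\nabla'(\tilde y_n)' - \Id_{2\times2})$ in $L^2(\Omega)$ requires carefully combining \eqref{eq:cont-strain-convergence} with the quadratic-order expansion of $R_n^T$, i.e. showing that the contribution $(R_n^T - \Id)\nabla'\tilde y_n/h_n^2$ has an $L^2$-bounded symmetric part cancelling appropriately against $\tfrac12 A^2$ from \eqref{eq:sym-Rn-Id}, and that the skew part of $\nabla'\hat u_n$ is controlled — here one uses that $\skewo(\nabla'\hat u_n)$ has bounded $L^2$ norm because its $x_3$-average's skew part is fixed by \eqref{eq:ty-no-skew}/the choice of $R_n^*$ while its $x_3$-derivative $\partial_3\skewo\nabla'\hat u_n = \skewo\nabla'\partial_3\hat u_n = -\skewo(\nabla')^2 v = 0$, so $\skewo\nabla'\hat u_n$ is essentially $x_3$-independent and equibounded. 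Everything else is a routine application of Poincaré's inequality on the cylinder $\Omega$ and extraction of a further subsequence.
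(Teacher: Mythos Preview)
Your argument for $\hat v_n$ is correct and matches the paper's: you compute $\nabla'\hat v_n = (A_n)_{3,(1,2)} \to \nabla' v$ and $\partial_3\hat v_n = 1 + h_n (A_n)_{33} \to 1$ in $L^2$, combine with $\int_0^1 \hat v_n\,dx_3 \to v$, and finish by Poincar\'e.

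For $\hat u_n$, however, there is a genuine gap. You correctly identify $\partial_3\hat u_n = (A_n)_{(1,2),3}$ as bounded in $L^2$ with limit $-\nabla' v$, and you outline how $\sym\nabla'\hat u_n$ is bounded via \eqref{eq:ty-closeto-SO} and \eqref{eq:sym-Rn-Id}. The problem is your control of $\skewo\nabla'\hat u_n$. You write $\partial_3\skewo\nabla'\hat u_n = \skewo\nabla'\partial_3\hat u_n = -\skewo(\nabla')^2 v = 0$, but the substitution $\partial_3\hat u_n \mapsto -\nabla' v$ is only valid in the $L^2$-limit; for finite $n$ you have no bound on $\nabla'\partial_3\hat u_n = \nabla'(A_n)_{(1,2),3}$ at all. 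So you cannot conclude that $\skewo\nabla'\hat u_n$ is ``essentially $x_3$-independent'', and the $L^2(\Omega)$-boundedness of $\nabla'\hat u_n$ is not established.

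The paper closes exactly this gap by invoking Korn's inequality on $\Omega$ in the form
\[
\lVert \hat u_n\rVert_{W^{1,2}(\Omega;\R^2)} \le C\Big(\lVert\sym\nabla'\hat u_n\rVert_{L^2} + \lVert\partial_3\hat u_n\rVert_{L^2} + \Big|\int_\Omega\skewo\nabla'\hat u_n\Big| + \Big|\int_\Omega\hat u_n\Big|\Big),
\]
so that your three correct ingredients (bounded $\sym\nabla'\hat u_n$, bounded $\partial_3\hat u_n$, and $\int_\Omega\skewo\nabla'\hat u_n = 0$ from \eqref{eq:ty-no-skew}) together with the boundedness of $\int_\Omega\hat u_n$ from \eqref{eq:un-conv} directly yield the full $W^{1,2}$-bound, without ever needing to bound $\skewo\nabla'\hat u_n$ separately. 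Once compactness is in hand, the identification of the limit via $\partial_3\hat u \to -\nabla' v$ and $\int_0^1\hat u_n\,dx_3 \wto u$ proceeds exactly as you describe.
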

\begin{proof}
According to Korn's inequality
\begin{align*}
\lVert \hat{u}_n \rVert_{W^{1,2}(\Omega; \R^2)} &\leq C \Big( \lVert \sym  \nabla' \hat{u}_n \rVert_{L^2(\Omega; \R^{2\times 2})}+ \Big\lVert \frac{\partial\hat{u}_n}{\partial x_3} \Big\rVert_{L^2(\Omega; \R^2)}\\
&\quad + \Big\lvert \int_{\Omega}  \skewo \nabla' \hat{u}_n \,dx \Big\rvert + \Big\lvert \int_{\Omega} \hat{u}_n \,dx \Big\rvert \Big).
\end{align*}
According to Theorem \ref{thm:contcomp}, $\sym  \nabla' \hat{u}_n$ is bounded in $L^2$ by \eqref{eq:ty-closeto-SO} and \eqref{eq:sym-Rn-Id}, $\int \skewo  \nabla' \hat{u}_n \,dx=0$  by \eqref{eq:ty-no-skew}, and $\int \hat{u}_n \,dx$ is bounded due to \eqref{eq:un-conv}. As
\begin{equation*}
\frac{\partial(\hat{u}_n)_i}{\partial x_3}  = \frac{1}{h_n} ( \nabla_n \tilde{y}_n - \Id )_{i3},
\end{equation*}
$i=1,2$, this term is bounded in $L^2$ as well. This shows compactness. To identify the limit and thus show convergence of the entire sequence, note that
\begin{equation*}
\int_0^1 \hat{u}_n \,dx_3 \wto u \text{ in } W^{1,2}(S;\R^2),
\end{equation*}
by \eqref{eq:un-conv} and
\begin{equation*}
\frac{\partial(\hat{u}_n)_i}{\partial x_3}  = \frac{1}{h_n} ( \nabla_n \tilde{y}_n - \Id )_{i3} \to - \frac{\partial v}{\partial x_i} \text{ in } L^2(\Omega),
\end{equation*}
for $i=1,2$ by \eqref{eq:grad-yn-Id-A}.

\eqref{eq:grad-yn-close-to-Id} and \eqref{eq:vn-conv} in Theorem \ref{thm:contcomp} also show that $\hat{v}_n$ is bounded in $W^{1,2}(\Omega)$ with $\frac{\partial  \hat{v}_n}{\partial x_3} \to 1$ and
\[ \int_0^1 \hat{v}_n \,dx_3 \to v. \qedhere \]
\end{proof}

As a first consequence, we will now describe the limiting behavior of the force term $E_{\rm body}(w_n) = E_{\rm body}(y_n) = \sum_{x \in \tilde{\bar{\Lambda}}_n} f_n(x) \cdot y_n(x)$, where $f_n(x) = f_n(x')$ satisfies \eqref{eq:force-bed}, \eqref{eq:force-bdry0} and $h_n^{-3} \bar{f}_n \to f$ in $L^2(S)$. 

Note that the forces considered are a bit more general than in \cite{FJM:06}.

\begin{proposition}\label{prop:limiting-force} Let $y_n$ be a sequence with $E_n(y_n) \le C h_n^4$ and suppose that \eqref{eq:unvn-conv} holds true for $\tilde{y}_n$, $u_n$, $v_n$ as defined in \eqref{eq:yn-tilde-def}, \eqref{eq:un-def}, \eqref{eq:vn-def}. Assume that $R^*_n \to R^*$. Then 
\[ \frac{\eps_n^3}{h_n^5} E_{\rm body}(y_n) 
   \to \begin{cases} 
          \int_S f(x') \cdot v(x') R^*e_3 \, dx', & \text{if } \nu_n \to \infty, \\ 
          \frac{\nu}{\nu-1} \int_S f(x') \cdot v(x') R^*e_3 \, dx', & \text{if } {\nu_n = \nu \text{ constant}}, \\ 
       \end{cases} \] 
as $n \to \infty$. 
\end{proposition}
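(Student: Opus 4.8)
The plan is to rewrite $E_{\rm body}(y_n) = \sum_{x\in\tilde\Lambda_n} f_n(x')\cdot y_n(x)$ in terms of the rescaled displacement $\tilde{y}_n$ and to exploit the fact that $f_n$ is independent of $x_3$, vanishes near $\partial S$, and has no net force or first moment. First I would pass from $y_n$ (defined only on the lattice $\tilde\Lambda_n$) to the piecewise constant interpolation $\dbar{y}_n$, which introduces only an error controlled by $\eps_n\,\bar\nabla_n\dbar y_n$; since $\mathcal{I}(\dtilde y_n)\le Ch_n^4$ by Proposition~\ref{prop:energy-estimates} and $\bar\nabla_n\dbar y_n$ is close to $R^*_n Z$ in $L^2$ by Lemmas~\ref{lemma:bdry-control} and \ref{lemma:interpol-rigidity}, this error is lower order after multiplying by $\eps_n^3/h_n^5$. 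Then I would write $y_n(x) = R^*_n(x' , h_n x_3)^T + (\dtilde y_n(x) - R^*_n(x',h_nx_3)^T)$ and use $\sum f_n = 0$, $\sum f_n\otimes x' = 0$ (the rescaled versions of \eqref{eq:force-bed}) to kill the affine part $R^*_n(x',0)^T$ exactly, so that only $R^*_n(0,0,h_nx_3)^T = h_n x_3 R^*_n e_3$ and the displacement remainder survive. Because $f_n$ does not depend on $x_3$, the $x_3$-average turns $h_n x_3$ into $\tfrac{h_n}{2}$ times a combinatorial factor counting atomic layers, which is where the $\tfrac{\nu}{\nu-1}$ versus $1$ dichotomy arises: in the discrete sum over $x_3\in\{0,\tfrac{\eps_n}{h_n},\dots,1\}$ (i.e.\ $\nu$ layers) the average of $x_3$ is $\tfrac12$, but the correct normalization by the "area element" $\tfrac{\eps_n^3}{h_n}$ per atom overcounts by a factor $\tfrac{\nu}{\nu-1}$ when $\nu$ is fixed, while as $\nu_n\to\infty$ this factor tends to $1$.

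More concretely, I would split $\tfrac{\eps_n^3}{h_n^5}E_{\rm body}(y_n)$ into a leading term $\tfrac{\eps_n^3}{h_n^5}\sum_x f_n(x')\cdot h_n x_3 R^*_n e_3$ and a remainder $\tfrac{\eps_n^3}{h_n^5}\sum_x f_n(x')\cdot\big(\dbar y_n(x) - R^*_n(x',h_nx_3)^T\big) + (\text{interpolation error})$. For the leading term, using that $f_n$ is $x_3$-independent, the inner sum over the $\nu_n$ layers of $x_3$ factors out, giving (after the $\tfrac{\eps_n^3}{h_n}$ normalization is interpreted as $\eps_n^2\cdot\tfrac{\eps_n}{h_n}$) a Riemann-sum approximation $\tfrac{1}{h_n^3}\int_S \bar f_n(x')\,dx'$ against $R^*_n e_3$ times the layer average of $x_3$; combining $h_n^{-3}\bar f_n\to f$ in $L^2(S)$ with $R^*_n\to R^*$ and $u_n$'s $W^{1,2}$ boundedness yields the claimed limit, the combinatorial prefactor being $1$ when $\nu_n\to\infty$ and $\tfrac{\nu}{\nu-1}$ when $\nu$ is constant. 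Actually it is cleaner to keep $y_n$ itself and only replace $(\dtilde y_n)'$-averages by $u_n$ and $(\dtilde y_n)_3$-averages by $v_n$: writing $y_n(x) = R^*_n\big(\dtilde y_n(x)\big) + \text{(error)}$ and recalling $\tilde y_n = R^*_n{}^T\dtilde y_n - c_n$, one has $\dtilde y_n = R^*_n(\tilde y_n + c_n)$, and since $\sum f_n = 0$ the constant $c_n$ drops; then $\sum f_n\otimes x'=0$ removes the in-plane identity part of $\tilde y_n$, leaving $\sum_x f_n(x')\cdot R^*_n\big((\tilde y_n)'(x)-x'\,,\,(\tilde y_n)_3(x)\big)^T = \sum_x f_n(x')\cdot R^*_n\big(h_n^2\hat u_n(x)\,,\,h_n\hat v_n(x)\big)^T$ in the notation of Proposition~\ref{prop:finer-statement}, and the $h_n^2\hat u_n$ term is lower order after dividing by $h_n^5$ (since $\|\bar f_n\|_{L^2}\le Ch_n^3$ and $\hat u_n$ is $W^{1,2}$-bounded, this contributes $O(h_n^3\cdot h_n^2/h_n^5)=O(1)$... ) — so in fact one must be careful: the in-plane term is $O(1)$, not negligible, but it is killed exactly because $\int_0^1\hat u_n\,dx_3 = u_n$ and we still need $\sum f_n(x')\cdot R^*_n(u_n(x'),0)^T\cdot\eps_n^3/h_n^3\to 0$, which fails unless the first-moment condition is used; rather, the $x_3$-linear part of $\hat u_n$ (namely $-(x_3-\tfrac12)\nabla'v$) averages to zero in $x_3$ and the remaining $u_n$-part pairs against $\bar f_n$ to give $\eps_n^3/h_n^3$ times a bounded quantity, hence $\to 0$. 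The surviving term is then $\tfrac{\eps_n^3}{h_n^5}\sum_x f_n(x')\cdot R^*_n e_3\, h_n\hat v_n(x)$, and using $\hat v_n\to \hat v = v(x') + (x_3-\tfrac12)$ with the $x_3$-average of $\hat v_n$ equal to $v_n\to v$, the discrete $x_3$-sum over $\nu_n$ layers produces the factor: $\tfrac{1}{\nu_n-1}\sum_{i=0}^{\nu_n-1} 1 = \tfrac{\nu_n}{\nu_n-1}$, matching $h_n = (\nu_n-1)\eps_n$ against the naive $h_n/\eps_n$ count.

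The main obstacle is bookkeeping the exact combinatorial constant in the ultrathin regime: one has to be scrupulous about whether the $\eps_n^3/h_n$ prefactor distributes as "$\eps_n^2$ per in-plane cell times $\eps_n/h_n = 1/(\nu_n-1)$" while there are $\nu_n$ atomic layers, so that $\sum_{x_3}$ contributes $\nu_n\cdot\tfrac{1}{\nu_n-1}$ rather than $1$; the $x_3$-independence of $f_n$ is exactly what makes this factor appear cleanly, and the vanishing of $f_n$ near $\partial S$ together with \eqref{eq:force-bdry0} is what lets us ignore the boundary modification and the difference between $\Lambda_n$, $\Lambda_n^{\circ}$, and $\bar\Lambda_n$ throughout. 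A secondary technical point is upgrading the $W^{1,2}_{\rm loc}$-type convergences \eqref{eq:unvn-conv} to statements strong enough to pass to the limit in the Riemann sum against $h_n^{-3}\bar f_n\to f$ in $L^2(S)$; since $f_n$ is supported away from $\partial S$ this is harmless — the relevant convergence is $L^2$ on compact subsets containing the support of $f$, and weak-$L^2$ convergence of $v_n$ against the strongly convergent $h_n^{-3}\bar f_n$ suffices.
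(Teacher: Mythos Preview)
Your overall strategy---subtract the rigid motion using the moment conditions \eqref{eq:force-bed}, reduce to the out-of-plane displacement $\hat v_n$, and read off the $\tfrac{\nu}{\nu-1}$ factor from the layer count---is the same as the paper's. However, the execution contains two genuine errors that muddy the argument considerably.

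First, in your initial ``concrete'' paragraph, what you call the \emph{leading term}
\[
\frac{\eps_n^3}{h_n^5}\sum_x f_n(x')\cdot h_n x_3\, R^*_n e_3
\]
is identically zero, not the limit: since $f_n$ is independent of $x_3$, the sum over layers factors and leaves $\big(\sum_{x'} f_n(x')\big)\cdot(\ldots)$, which vanishes by \eqref{eq:force-bed}. There is no $v$ in this term, so it cannot possibly yield $\int_S f\cdot v\,R^*e_3$. The entire limit must come from what you call the remainder.

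Second, in the paragraph starting ``Actually it is cleaner\dots'', your order estimate for the in-plane contribution is off by one power of $h_n$. You write ``$O(h_n^3\cdot h_n^2/h_n^5)=O(1)$'', but the prefactor is $\tfrac{\eps_n^3}{h_n^5}=\tfrac{1}{h_n^4}\cdot\tfrac{\eps_n^3}{h_n}$ and $\tfrac{\eps_n^3}{h_n}\sum_x\approx\int_{\tilde V_n^{\rm out}}$, so the correct count is $h_n^3\cdot h_n^2/h_n^4=h_n\to 0$. The in-plane term is negligible outright, and the subsequent attempts to argue it away (the $x_3$-averaging, the claim that the $u_n$-part is ``$\eps_n^3/h_n^3$ times bounded''---note this is a nonzero constant when $\nu$ is fixed) are both unnecessary and, in the finite-$\nu$ case, incorrect.

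The paper's version is exactly your second approach done cleanly: one line shows
\[
\frac{\eps_n^3}{h_n^5}E_{\rm body}(y_n)
=\int_{\tilde V_n^{\rm out}} h_n^{-3}{R^*_n}^T\bar f_n(x)\cdot h_n^{-1}\Big(\bar y_n(x)-\begin{pmatrix}x'\\0\end{pmatrix}\Big)\,dx,
\]
and then Proposition~\ref{prop:finer-statement} gives $h_n^{-1}(\tilde y_n-(x',0)^T)=(h_n\hat u_n,\hat v_n)^T\to \hat v\,e_3$ in $L^2$; the in-plane part carries the extra $h_n$ and vanishes automatically. Passing to $\bar y_n$ via Remark~\ref{rmk:convergence-notions} and using that $f_n$ is $x_3$-independent (so $\sum_x x_3 f_n(x)=0$) then yields the stated limits, with the $\tfrac{\nu}{\nu-1}$ factor arising exactly as you describe.
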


\begin{proof}
In terms of the extended and interpolated force density we have  
\begin{align*}
  \frac{\eps_n^3}{h_n^5} E_{\rm body}(y_n) 
  &= \frac{1}{h_n^4} \int_{\tilde{V}^{\rm out}_n} \bar{f}_n(x) \cdot \dbar{y}_n(x) \, dx \\ 
  &= \frac{1}{h_n^4} \int_{\tilde{V}^{\rm out}_n} \bar{f}_n(x) \cdot \Big( \dbar{y}_n(x) - R^*_n \begin{pmatrix} x' \\ 0 \end{pmatrix} - R^*_n c_n \Big) \, dx \\ 
  &= \int_{\tilde{V}^{\rm out}_n} h_n^{-3} {R^*_n}^T \bar{f}_n(x) \cdot h_n^{-1} \Big( \bar{y}_n - \begin{pmatrix} x' \\ 0 \end{pmatrix} \Big) \, dx. 
\end{align*}
By Proposition~\ref{prop:finer-statement}, 
$ h_n^{-1} \Big( \tilde{y}_n - \begin{pmatrix} x' \\ 0 \end{pmatrix} \Big) 
   \to \hat{v} e_3 $ in $L^2(\Omega; \R^3)$
with $\hat{v}$ as in \eqref{eq:hat-v-conv} and so Remark~\ref{rmk:convergence-notions} shows that 
\begin{align*}
  \frac{\eps_n^3}{h_n^5} E_{\rm body}(y_n) 
  \to \int_{\Omega} {R^*}^T f(x) \cdot \hat{v}(x) e_3 \, dx 
  = \int_{\Omega} f(x') \cdot v(x') R^* e_3 \, dx'
\end{align*}
if $\nu_n \to \infty$, where in the last step we have used that \eqref{eq:force-bed} together with $f_n(x) = f_n(x')$ also implies that $\sum_{x \in \Lambda_n} x_3 f_n(x) = 0$. If $\nu_n = \nu$ constant, then Remark~\ref{rmk:convergence-notions} gives 
\begin{align*}
  \frac{\eps_n^3}{h_n^5} E_{\rm body}(y_n) 
  &\to \frac{1}{\nu-1} \sum_{j=0}^{\nu -1} \int_{S} {R^*}^T f(x') \cdot \hat{v}(x',\tfrac{j}{\nu-1}) e_3 \, dx' \\ 
  &= \frac{\nu}{\nu-1}  \int_{S}  f(x') \cdot v(x') R^* e_3 \, dx' 
\end{align*}
with an analogous argument for the last step. 
\end{proof}

\subsection{Lower bounds}

To show the lower bounds in our $\Gamma$-convergence results, we have to understand the limit of the discrete strain. Let $(y_n)$ satisfy $E_n(y_n) \leq C h_n^4$ and set 
\begin{equation*}
\bar{G}_n := \frac{1}{h_n^2} (R_n^T \bar{\nabla}_n \bar{y}_n - Z).
\end{equation*}
By Proposition~\ref{prop:energy-estimates} $(\dtilde{y}_n)$ satisfies the assumptions of Theorem~\ref{thm:contcomp} and Proposition~\ref{prop:finer-statement} so that, after a rigid change of coordinates, $\tilde{y}_n$ satisfies \eqref{eq:ty-closeto-SO}--\eqref{eq:cont-strain-identify-ii} and \eqref{eq:hat-u-conv}--\eqref{eq:hat-v-form}. In particular, by \eqref{eq:cont-strain-convergence} we know that for a subsequence the continuum strain converges as 
\[ \frac{1}{h_n^2} (R_n^T \nabla_n \tilde{y}_n - \Id) 
   \wto G  \text{ in }  L^2(\Omega; \R^{3 \times 3}), \] 
where $G$ satisfies \eqref{eq:cont-strain-identify-i} and \eqref{eq:cont-strain-identify-ii}.

For the discussion of discrete strains, recall that we defined
\begin{align*}
  Z_- 
  &= (-z^1, -z^2, -z^3 , -z^4, +z^5, +z^6, +z^7, +z^8), \\ 
  M 
  &= \frac{1}{2} e_3 \otimes (+1, -1, +1, -1, +1, -1, +1, -1). 
\end{align*} 
We define a projection $P$ acting on maps via 
\[ Pf(x) 
   = \dashint_{(k-1)/(\nu-1)}^{k/(\nu-1)} f(x',t) \, dt 
   \qquad \text{if} \qquad 
   \tfrac{k-1}{\nu-1} \le x_3 < \tfrac{k}{\nu-1} \] 
in case $\nu_n \equiv \nu < \infty$ and $P = \id$ in case $\nu_n \to \infty$.

\begin{proposition}\label{prop:limiting-strain}
Let $(y_n)_n$ satisfy $E_n(y_n) \leq C h_n^4$ with $\frac{1}{h_n^2} (R_n^T \nabla_n \tilde{y}_n - \Id) \wto G$ in $L^2(\Omega; \R^{3 \times 3})$. Then,
\[ \bar{G}_n 
   \wto \bar{G} 
   := \begin{cases} 
         GZ, & \text{if } \nu_n \to \infty, \\ 
         PGZ + \frac{1}{2(\nu-1)} G_3, & \text{if } \nu_n \equiv \nu \in \N,
      \end{cases} \] 
in $L^2(\Omega; \R^{3 \times 8})$, where $G_3$ is as in Theorem \ref{thm:Gammalimit}.
\end{proposition}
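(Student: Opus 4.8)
The plan is to relate the discrete gradient $\bar\nabla_n\bar y_n$ to the continuum gradient $\nabla_n\tilde y_n$ cell by cell, exploiting that the piecewise-affine interpolation $\tilde w_n$ reproduces exact atomic averages at cell centers and faces (equations \eqref{eq:interpolsurf}). The key observation is that $(\bar\nabla_n\bar y_n(x))_{\cdot i}$ is, up to the subtraction of the cell average, the value $\tfrac{1}{\eps_n}(\bar y_n(x+\eps_n(z^i)',x_3+\tfrac{\eps_n}{h_n}z^i_3)-\langle\bar y_n\rangle)$, i.e.\ a finite-difference operator applied to $\bar y_n$ in the directions $(z^i)'$ and $\tfrac{1}{h_n}z^i_3$. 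Writing $F_n := \nabla_n\tilde y_n$, I would express $(\bar\nabla_n\bar y_n(x))_{\cdot i} = F_n(x)z^i + \text{(lower order)} + \text{(discreteness correction)}$. For the in-plane and $h_n\gg\eps_n$ cases the correction vanishes in the limit because $\eps_n/h_n\to0$ makes the vertical finite difference infinitesimal; for $\nu$ fixed, the vertical finite difference is \emph{not} infinitesimal and produces the extra term.

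Concretely, I would proceed as follows. First, from Proposition~\ref{prop:energy-estimates}, $\mathcal{I}(\dtilde y_n)\le Ch_n^4$, so Theorem~\ref{thm:platerigidity} gives $\|\nabla_n\tilde y_n - R_n\|_{L^2}^2\le Ch_n^4$ and $\|\nabla\tilde R_n\|_{L^2}^2\le Ch_n^2$, hence $R_n$ varies slowly. Then $\bar G_n = \tfrac{1}{h_n^2}(R_n^T\bar\nabla_n\bar y_n - Z)$ is bounded in $L^2(\Omega;\R^{3\times 8})$ by Lemmas~\ref{lemma:bdry-control}, \ref{lemma:interpol-rigidity} together with Assumption {\bf (G)} and $E_n(y_n)\le Ch_n^4$, so a subsequence converges weakly in $L^2$ to some $\bar G$. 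To identify $\bar G$, I would compute $\bar\nabla_n\bar y_n(x)$ in terms of $\nabla_n\tilde y_n$. On an inner cell, Taylor-expanding the affine interpolation, $\bar y_n(x+\eps_n(z^i)',x_3+\tfrac{\eps_n}{h_n}z^i_3) - \langle\bar y_n\rangle = \eps_n\,[\nabla'\tilde y_n\,(z^i)' + \partial_3\tilde y_n\,\tfrac{1}{h_n}z^i_3] + (\text{error})$ up to the cell-average normalization, where for $\nu$ fixed the $x_3$-difference is a genuine finite difference of size $\tfrac{1}{\nu-1}$ and the ``error'' picks up the discrete second-order term; careful bookkeeping of the averaged vertical slopes across the $\nu-1$ layers is what produces $Z_-$ and the $\partial_{12}v$ contribution via $M$.

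For the mechanism behind $G_3$: in the fixed-$\nu$ case the interpolation $\tilde y_n$ is affine in $x_3$ on each layer $(\tfrac{k-1}{\nu-1},\tfrac{k}{\nu-1})$, so $\partial_3\tilde y_n$ is piecewise constant in $x_3$; the discrete gradient sees the difference of values at $z^i_3=\pm\tfrac12$, which is an average of these layer slopes rather than a pointwise value. The projection $P$ in the statement captures exactly this averaging of the in-plane part $\nabla'\tilde y_n$. The extra term $\tfrac{1}{2(\nu-1)}G_3$ comes from the second-order correction in the vertical finite difference: $G_3 = \big(\begin{smallmatrix}G_2 & 0\\0&0\end{smallmatrix}\big)Z_- + \partial_{12}v\, M$, where $\big(\begin{smallmatrix}G_2&0\\0&0\end{smallmatrix}\big)Z_-$ records the out-of-plane curvature effect (sign flip on the lower four corners, hence $Z_-$ instead of $Z$) and $\partial_{12}v\,M$ records an in-plane non-affine distortion coming from the cross-derivative of $v$. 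I would extract these using the convergences from Proposition~\ref{prop:finer-statement}: $\hat u_n \wto u - (x_3-\tfrac12)\nabla'v$ and $\hat v_n\wto v + (x_3-\tfrac12)$ in $W^{1,2}(\Omega)$, and $A_n=\tfrac{1}{h_n}(\nabla_n\tilde y_n-\Id)\to e_3\otimes\nabla'v - \nabla'v\otimes e_3$ in $L^2$, combined with the continuum strain convergence \eqref{eq:cont-strain-convergence}--\eqref{eq:cont-strain-identify-ii}. The quadratic term $\tfrac12\nabla'v\otimes\nabla'v$ in $G_1$ requires care since it is genuinely nonlinear in the displacement; it is handled, as in \cite{FJM:06}, by expanding $R_n^T\nabla_n\tilde y_n$ to second order in $h_n$ using \eqref{eq:grad-yn-Id-A} and \eqref{eq:sym-Rn-Id}.

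The main obstacle will be the precise bookkeeping in the fixed-$\nu$ case: unlike the $h_n\gg\eps_n$ regime where everything collapses to a differential operator, here one must track the finite difference in $x_3$ as a genuine discrete object, keep the in-plane and out-of-plane contributions separate through the normalization by $\langle\bar y_n\rangle$, and correctly attribute the second-order terms to $Z_-$ and $M$ rather than to $Z$. The subtlety is that the discrete gradient normalizes by the cell average $\langle\bar y_n\rangle$ (so $\sum_i(\bar\nabla_n\bar y_n)_{\cdot i}=0$), which interacts nontrivially with the layer-by-layer affine structure; getting the coefficient $\tfrac{1}{2(\nu-1)}$ exactly right, and verifying that the limit is independent of the chosen subsequence (so the whole sequence converges), is where the care lies. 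The $h_n\gg\eps_n$ case is comparatively straightforward: there $\eps_n/h_n\to 0$, so $P=\id$, all finite differences become infinitesimal, the correction term drops, and $\bar G_n\wto GZ$ follows almost directly from Lemma~\ref{lemma:interpol-Vergleich} and the continuum convergence.
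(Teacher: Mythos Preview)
Your high-level picture is right: boundedness of $\bar G_n$ in $L^2$, passage to a weak limit along a subsequence, identification of the limit via the continuum strain $G$, and the appearance of a non-infinitesimal $x_3$-difference when $\nu$ is fixed. But the mechanism you propose differs from the paper's in an important structural way, and your sketch contains two imprecisions.

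\textbf{The paper's approach.} Instead of writing each column $(\bar\nabla_n\bar y_n)_{\cdot i}$ as $F_n z^i$ plus error and tracking the error, the paper acts on $\bar G_n$ by test vectors $b\in\R^8$ and splits $\R^8$ into an \emph{affine} subspace $\spano\{b^0,b^1,b^2,b^3\}$ with $b^0=(1,\dots,1)$, $b^i=Z^Te_i$, and its orthogonal \emph{non-affine} complement (those $b$ with $\sum_ib_i=0$ and $Zb=0$). For affine $b$ the face-averaging property of the interpolation gives an exact identity $\bar\nabla_n\bar y_n(x)\,b^i = 2\dashint_{\tilde Q_n(x)}(\nabla_n\tilde y_n)e_i$, so $\bar Gb=PGZb$ directly. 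For non-affine $b$, since $(FZ)b=F(Zb)=0$ for any $3\times3$ matrix $F$, the affine approximation contributes nothing; the paper introduces a two-dimensional discrete gradient $\bar\nabla^{\rm 2dim}$ on the top and bottom faces of a cell and decomposes $\bar G_n b$ into the \emph{difference} of top and bottom 2D gradients (tested against $b^{(2)}$) plus a single 2D gradient tested against $b^{(1)}+b^{(2)}$. The first piece is related to $\partial_3\tilde y_n$ and handled via $A_n\to A$; the second is handled by discrete integration by parts and the convergences of $\hat u_n,\hat v_n$ from Proposition~\ref{prop:finer-statement}. This decomposition is what produces the clean formula with $Z_-$ and $M$.

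\textbf{Two imprecisions in your sketch.} First, your proposed ``Taylor expansion of the affine interpolation'' is not quite available: $\tilde y_n$ is only piecewise affine on the simplicial subdivision, so its derivatives jump and a naive Taylor expansion across a cell is not justified. The paper never Taylor-expands $\tilde y_n$; it uses the exact averaging identities \eqref{eq:interpolsurf} and manipulates finite-difference operators via discrete integration by parts against smooth test functions. Second, your remark that ``the quadratic term $\tfrac12\nabla'v\otimes\nabla'v$ in $G_1$ requires care'' is a red herring here: in this proposition $G$ is \emph{given} as the weak limit of the continuum strain, and the identification of $\sym G''$ in terms of $u,v$ (including the quadratic piece) is already contained in \eqref{eq:cont-strain-identify-i}--\eqref{eq:cont-strain-identify-ii} from Theorem~\ref{thm:contcomp}. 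You only need to relate $\bar G$ to the already-identified $G$; no second-order expansion of $R_n^T\nabla_n\tilde y_n$ is needed in this step.

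Your approach could in principle be pushed through, but the affine/non-affine split in $\R^8$ together with $\bar\nabla^{\rm 2dim}$ is what makes the bookkeeping tractable and the coefficient $\tfrac{1}{2(\nu-1)}$ fall out cleanly.
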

\begin{proof}
The compactness follows from Theorem \ref{thm:contcomp}. On a subsequence (not relabeled) we thus find $\bar{G}_n \wto \bar{G}$. As $R_n \to \Id$ in $L^2$ while being uniformly bounded, we also find
\begin{equation*}
R_n\bar{G}_n = \frac{1}{h_n^2} (\bar{\nabla}_n \bar{y}_n - R_n Z)  \wto \bar{G}.
\end{equation*}
We have
\[ \lim_{n \to \infty} \frac{1}{h_n^2} (R_n^T \nabla_n \tilde{y}_n - \Id) 
   = \lim_{n \to \infty} \frac{1}{h_n^2} (\nabla_n \tilde{y}_n - R_n)  
   = G, \] 
weakly in $L^2(\Omega; \R^{3 \times 3})$ where $G$ satisfies \eqref{eq:cont-strain-identify-i} and \eqref{eq:cont-strain-identify-ii}. 

In order to discuss the discrete strains in more detail, we separate affine and non-affine contributions. We say that a $b \in \R^8$ is affine if it is an element of the linear span of $b^0, b^1, b^2, b^3$, where $b^0 = (1, \ldots, 1)$ and $b^i = Z^T e_i$, $i = 1,2,3$. Any $b \in \R^8$ which is perpendicular to all affine vectors is called non-affine. I.e., a non-affine $b$ is characterized by $\sum_{i=1}^8 b_i =0$ and $Zb=0$. 

We begin by identifying the easier to handle affine part of the limiting strain. By construction we have $R_n \bar{G}_n b^0 \equiv 0$ and so $\bar{G} b^0 = 0 = G Z b^0$. For $i \in \{1, 2, 3\}$ we use that on any $\tilde{Q}_n(x)$, $x \in \tilde{\Lambda}'_n$,
\[ \bar{\nabla}_n \bar{y}_n (x) b^1 
   = \frac{1}{2\eps_n} \big( (y_2 + y_3 + y_6 + y_7) - ( y_1 + y_4 + y_5 + y_8) \big), \] 
where $y_i = \tilde{y}_n(x' + \eps_n (z^{i})', x_3 + \tfrac{\eps_n}{h_n} z^i_3)$. So, using \eqref{eq:interpolsurf} for $\tilde{y}_n$, 
\begin{align*}
  \bar{\nabla}_n \bar{y}_n (x) b^1 
  &= \frac{2}{\eps_n} \dashint_{x + \{-\frac{\eps_n}{2}\} \times (-\frac{\eps_n}{2}, \frac{\eps_n}{2}) \times (-\frac{\eps_n}{2h_n}, \frac{\eps_n}{2h_n})} \tilde{y}_n(\xi + \eps_n e_1) - \tilde{y}_n(\xi) \, d\xi \\ 
  &= 2 \dashint_{\tilde{Q}_n(x)} \partial_1 \tilde{y}_n(\xi) \, d\xi. 
\end{align*}
Analogous arguments yield 
\begin{align*}
  \bar{\nabla}_n \bar{y}_n (x) b^2
  &= 2 \dashint_{\tilde{Q}_n(x)} \partial_2 \tilde{y}_n(\xi) \, d\xi 
  \quad \mbox{and} \\
  \bar{\nabla}_n \bar{y}_n (x) b^3 
  &= \frac{2}{h_n} \dashint_{\tilde{Q}_n(x)} \partial_3 \tilde{y}_n(\xi) \, d\xi.
\end{align*}
By $P_n$ we denote the projection which maps functions to piecewise constant functions via $P_n f(x) = \dashint_{\tilde{Q}_n(x)} f(\xi) \, d\xi$ on $\tilde{Q}_n(x)$. Then $P_n [ R_n\bar{G}_n ] \wto \bar{G}$. On the other hand, observing that $Z Z^T = 2 \Id_{3\times 3}$, we find 
\begin{align*}
  P_n [ R_n\bar{G}_n ] b^i 
  = \frac{2}{h_n^2} P_n \big[ \partial_i \tilde{y}_n - R_n e_i \big] 
  \wto 2 P G e_i 
  = P G Z b^i, \quad i = 1,2 
\end{align*}
and
\begin{align*}
  P_n [ R_n\bar{G}_n ] b^3 
  = \frac{2}{h_n^2} P_n \big[ h_n^{-1} \partial_3 \tilde{y}_n - R_n e_3 \big] 
  \wto 2 P G e_3 
  = P G Z b^3. 
\end{align*}
In summary we get that for every affine $b \in \R^8$ 
\begin{align}\label{eq:Gbar-affine}
  \bar{G} b  
  = P G Z b. 
\end{align}

For the discussion of the non-affine part of the strain we fix a non-affine $b \in \R^8$, i.e., a $b$ satisfying $\sum_{i=1}^8 b_i =0$, $Zb=0$, and write  $b^T= ((b^{(1)})^T, (b^{(2)})^T)$, where $b^{(1)}, b^{(2)} \in \R^4$. Let $Z^{\rm 2dim} := ((z^1)', (z^2)', (z^3)', (z^4)') \in \R^{2 \times 4}$ be the matrix of two-dimensional directions. Then $Z^{\rm 2dim} (b^{(1)} + b^{(2)})=0$ and $\sum_{i=1}^4 b^{(1)}_i = \sum_{i=1}^4 b^{(2)}_i =0$. We introduce the difference operator 
\[ \bar{\nabla}^{\rm 2dim} f (x) 
   := \frac{1}{\eps_n} \Big( f(x' + \eps_n (z^i)', x_3) - \frac{1}{4} \sum_{j=1}^4 f(x' + \eps_n (z^j)', x_3) \Big)_{i=1,2,3,4}. \] 
The idea is now to separate differences into in-plane and out-of-plane differences, as all in-plane differences are infinitesimal, while out-of-plane differences stay non-trivial if $\nu_n \equiv \nu$ and have to be treated more carefully. 

Using 
\begin{align*}
  \bar{\nabla}_n \bar{y}_n(x) 
  &= \Big( \bar{\nabla}_n^{\rm 2dim} \bar{y}_n \big( x - \frac{\eps_n}{2h_n}e_3 \big), 
          \bar{\nabla}_n^{\rm 2dim} \bar{y}_n \big(x + \frac{\eps_n}{2h_n}e_3 \big) \Big) \\ 
  &\quad  + \frac{1}{2 h_n} \dashint_{\tilde{Q}_n(x)} \partial_3 \tilde y_n(\xi) \, d \xi \otimes (-1, -1, -1, -1, +1, +1, +1, +1) 
\end{align*}
we find
\begin{align}
  R_n \bar{G}_n(x) b
  &= \frac{1}{h_n^2} \bar{\nabla}_n \bar{y}_n(x) b\nonumber \\
  &=\frac{1}{h_n^2} \Big( \bar{\nabla}_n^{\rm 2dim} \bar{y}_n \big( x + \frac{\eps_n}{2h_n}e_3 \big) - \bar{\nabla}_n^{\rm 2dim} \bar{y}_n \big( x - \frac{\eps_n}{2h_n}e_3 \big)\Big) b^{(2)} \label{eq:strainidentification1}\\
  &\quad +\frac{1}{h_n^2} \bar{\nabla}_n^{\rm 2dim} \bar{y}_n \big( x - \frac{\eps_n}{2h_n}e_3 \big) (b^{(1)}+b^{(2)})\label{eq:strainidentification2},
\end{align} 
where we have used that $\sum_{i=1}^4 b^{(1)}_i = \sum_{i=1}^4 b^{(2)}_i = 0$. 

First consider the term \eqref{eq:strainidentification2}. Since $ \bar{\nabla}_n^{\rm 2dim} \bar{\id}(x - \frac{\eps_n}{2h_n}e_3)  = Z^{\rm 2dim}$  and $Z^{\rm 2dim} (b^{(1)}+b^{(2)}) = 0$, for any $\varphi \in C_c^\infty(\Omega)$ and $i=1,2$  by \eqref{eq:hat-u-conv} and Remark~\ref{rmk:convergence-notions} we have
\begin{align}
  &\frac{1}{h_n^2} e_i^T \int_\Omega \bar{\nabla}_n^{\rm 2dim} (\bar{y}_n - \bar{\id}) \big( x - \frac{\eps_n}{2h_n}e_3 \big) (b^{(1)}+b^{(2)}) \varphi(x)\,dx \notag \\ 
  &~~= \frac{1}{h_n^2} e_i^T \int_\Omega (\bar{y}_n - \bar{\id}) \big( x - \frac{\eps_n}{2h_n}e_3 \big) (\bar{\nabla}_n^{\rm 2dim})^* \varphi(x) (b^{(1)}+b^{(2)})\,dx \notag \\ 
  &~~\to -\int_\Omega \hat{u}_i(\tilde{x}) \nabla' \varphi(x) Z^{\rm 2dim} (b^{(1)}+b^{(2)})\,dx = 0,\label{eq:strainpart212}
\end{align}
where, either $\tilde{x}=x$ (if $\nu_n \to \infty$), or $\tilde{x} =  (x', \frac{\lfloor(\nu-1)x_3\rfloor}{\nu-1})$ (if $\nu_n =\nu$ is constant). 

For the third component, we instead have 
\begin{align*}
\frac{1}{h_n^2} e_3^T &\int_\Omega \bar{\nabla}_n^{\rm 2dim} \bar{y}_n \big( x - \frac{\eps_n}{2h_n}e_3 \big) (b^{(1)}+b^{(2)})  \varphi(x)\,dx\\
&= \frac{1}{h_n \eps_n (\nu_n-1)} e_3^T \int_\Omega \big( \bar{\nabla}_n^{\rm 2dim} \bar{y}_n \big( x - \frac{\eps_n}{2h_n}e_3 \big) \\ 
&\qquad\qquad\qquad\qquad\qquad - \nabla_n'  \bar{y}_n \big( x - \frac{\eps_n}{2h_n}e_3 \big) Z^{\rm 2dim} \big) (b^{(1)}+b^{(2)})  \varphi(x)\,dx\\
&= \frac{1}{(\nu_n-1)\eps_n} \int_\Omega\frac{(\bar{y}_n)_3( x - \frac{\eps_n}{2h_n}e_3)}{h_n}   \big((\bar{\nabla}_n^{\rm 2dim})^\ast \varphi(x) \\ 
&\qquad\qquad\qquad\qquad\qquad\qquad\qquad\qquad + \nabla_n'  \varphi(x) Z^{\rm 2dim} \big) (b^{(1)}+b^{(2)})  \,dx.
\end{align*}
Now,
\begin{align*}
  &\frac{1}{\eps_n} \Big((\bar{\nabla}_n^{\rm 2dim})^\ast \varphi(x) + \nabla_n'  \varphi(x) Z^{\rm 2dim} \Big) \\ 
&~~ \to \Big(\tfrac{1}{2}\nabla'^2 \varphi (x) [(z^i)',(z^i)']  - \tfrac{1}{8} \sum_{j=1}^4 \nabla'^2 \varphi (x) [(z^j)',(z^j)'] \Big)_{i=1,...,4} 
\end{align*}
uniformly. Therefore, \eqref{eq:hat-v-conv} gives
\begin{align}\label{eq:strainpart23infty}
\frac{1}{h_n^2} e_3^T \int_\Omega \bar{\nabla}_n^{\rm 2dim} \bar{y}_n \big( x - \frac{\eps_n}{2h_n}e_3 \big) (b^{(1)}+b^{(2)})  \varphi(x)\,dx\to 0,
\end{align}
if $\nu_n \to \infty$. For $\nu_n = \nu$ constant however, using \eqref{eq:hat-v-conv} and \eqref{eq:hat-v-form} we find
\begin{align}
&\frac{1}{h_n^2} e_3^T \int_\Omega \bar{\nabla}_n^{\rm 2dim} \bar{y}_n \big( x - \frac{\eps_n}{2h_n}e_3 \big) (b^{(1)}+b^{(2)})  \varphi(x)\,dx \notag \\
&~~\to \frac{1}{(\nu-1)} \int_\Omega\hat{v} \big(x', \frac{\lfloor (\nu-1) x_3 \rfloor}{\nu-1}  \big) \Big(\tfrac{1}{2}\nabla'^2 \varphi (x) [(z^i)',(z^i)']\Big)_{i=1,...,4} (b^{(1)}+b^{(2)})  \,dx \notag \\
&~~=\frac{1}{(\nu-1)} \int_\Omega \Big(\tfrac{1}{2}\nabla'^2 v(x')  [(z^i)',(z^i)']\Big)_{i=1,...,4} (b^{(1)}+b^{(2)}) \varphi (x) \,dx, \label{eq:strainpart23finite}
\end{align}
where we have used that $\sum_{i=1}^8 b_i = 0$.

We still need to find the limit of \eqref{eq:strainidentification1}. For any test function $\varphi \in C_c^\infty(\Omega; \R^3)$ we find
\begin{align*}
  &\int_\Omega \frac{1}{h_n^2} \Big( \bar{\nabla}_n^{\rm 2dim} \bar{y}_n \big( x + \frac{\eps_n}{2h_n}e_3 \big) - \bar{\nabla}_n^{\rm 2dim} \bar{y}_n \big( x - \frac{\eps_n}{2h_n}e_3 \big) \Big) b^{(2)} \cdot \varphi(x)\,dx\\ 
  &~~= \frac{\eps_n}{h_n} \int_\Omega \frac{1}{\eps_n h_n} \Big( \bar{y}_n \big( x + \frac{\eps_n}{2h_n}e_3) -  \bar{y}_n \big( x - \frac{\eps_n}{2h_n} e_3 \big) \Big) 
       \cdot (\bar{\nabla}_n^{\rm 2dim})^{*} P_n \varphi(x)b^{(2)}\,dx\\
  &~~= \frac{1}{h_n^2} \int_\Omega \dashint_{ \tilde{Q}_n(x)} \Big( \bar{y}_n \big( \xi + \frac{\eps_n}{2h_n}e_3) -  \bar{y}_n \big( \xi - \frac{\eps_n}{2h_n} e_3 \big) \Big) \, d\xi 
       \cdot (\bar{\nabla}_n^{\rm 2dim})^{*} P_n \varphi(x)b^{(2)}\,dx\\
  &~~= \frac{\eps_n}{h_n^3} \int_\Omega \partial_3 \tilde{y}_n (x) \cdot (\bar{\nabla}_n^{\rm 2dim})^{*} P_n \varphi(x)b^{(2)}\,dx\\
  &~~= \frac{\eps_n}{h_n} \int_\Omega P_n A_n (x) e_3 \cdot (\bar{\nabla}_n^{\rm 2dim})^{*} \varphi(x)b^{(2)}\,dx. 
\end{align*} 
Here the penultimate step is true by our specific choice of interpolation to define $\tilde{y}_n$, whereas the last step follows from \eqref{eq:grad-yn-Id-A} and $\bar{\nabla}_n^{\rm 2dim} \frac{1}{h_n} e_3 = 0$. If $\nu_n \to \infty$ this converges to $0$. In case $\nu_n = \nu$ constant we obtain from \eqref{eq:grad-yn-Id-A}
\begin{align}
  &\lim_{n \to \infty} \int_\Omega \frac{1}{h_n^2} \Big( \bar{\nabla}_n^{\rm 2dim} \bar{y}_n \big( x + \frac{\eps_n}{2h_n}e_3 \big) - \bar{\nabla}_n^{\rm 2dim} \bar{y}_n \big( x - \frac{\eps_n}{2h_n}e_3 \big) \Big) b^{(2)} \cdot \varphi(x)\,dx \notag \\ 
  &~~= - \frac{1}{\nu-1} \int_\Omega P A (x) e_3 \cdot \nabla'\varphi(x) Z^{\rm 2dim} b^{(2)}\,dx \notag \\
  &~~ = \frac{1}{\nu-1} \int_\Omega (\partial_1v(x'), \partial_2 v(x'), 0) \nabla'\varphi(x) Z^{\rm 2dim} b^{(2)}\,dx \notag \\
  &~~ = - \frac{1}{\nu-1} \int_\Omega \begin{pmatrix} \nabla'^2 v(x') Z^{\rm 2dim} b^{(2)} \\ 0 \end{pmatrix} \cdot \varphi(x) \,dx. \label{eq:strainpart1}
\end{align}

Summarizing \eqref{eq:strainpart212}, \eqref{eq:strainpart23infty}, \eqref{eq:strainpart23finite}, and \eqref{eq:strainpart1}, we see that for non-affine $b$ we have $\bar{G} b = 0$ in case $\nu_n \to \infty$ and 
\begin{align*}
  \bar{G}b 
  &= \begin{pmatrix}
-\frac{1}{\nu-1}\nabla'^2 v(x') Z^{\rm 2dim} b^{(2)} \\
\frac{1}{\nu-1} \sum_{i=1}^4 \tfrac{1}{2}\nabla'^2 v(x')  [(z^i)',(z^i)'] (b^{(1)} + b^{(2)})_i
\end{pmatrix} \\ 
  &= \begin{pmatrix}
-\frac{1}{2(\nu-1)}\nabla'^2 v(x') Z^{\rm 2dim} (b^{(2)} - b^{(1)}) \\
\frac{1}{2(\nu-1)} \sum_{i=1}^8 \nabla'^2 v(x')  [(z^i)',(z^i)'] b_i 
  \end{pmatrix} 
  - \frac{1}{8(\nu-1)} \Delta v(x') \sum_{j=1}^8 b_j e_3  
\end{align*}
as $\sum_{j=1}^8 b_j = 0$, if $\nu_n \equiv \nu$. 

Elementary computations show that for the affine basis vectors $b^k$, $k \in \{0,1,2,3\}$, 
\begin{align*}
  Z^{\rm 2dim} ((b^k)^2 - (b^k)^1) = 0 
\end{align*}
and also
\begin{align*}
  \sum_{i=1}^8 \nabla'^2 v(x')  [(z^i)',(z^i)'] b^k_i 
  - \frac{1}{4} \Delta v(x') \sum_{j=1}^8 b^k_j 
  = 0. 
\end{align*}
%
%\begin{tiny}
%Note that $(b^0)^1 + (b^0)^2 = (2, 2, 2, 2)$, $(b^1)^1 + (b^1)^2 = (-1, 1, 1, -1)$, 
%$(b^2)^1 + (b^2)^2 = (-1, -1, 1, 1)$, $(b^3)^1 + (b^3)^2 = 0$. 
%Writing $(\nabla'^2 v(x') = (f_{ij}) \in \R^{2 \times 2}$, one has 
%\begin{align*}
%  &\sum_{i=1}^4 \nabla'^2 v(x')  [(z^i)',(z^i)'] ( (b^0)^1 + (b^0)^2 )_i 
%    - \frac{1}{4} (\partial_{11} v(x') + \partial_{22} v(x')) \sum_{j=1}^8 b^0_j\\ 
%  &~~= \frac{1}{2} \big( (f_{11} + 2f_{12} + f_{22}) + (f_{11} - 2f_{12} + f_{22}) 
%    + (f_{11} + 2f_{12} + f_{22}) + (f_{11} - 2f_{12} + f_{22}) \big) \\ 
%  &~~= 2(f_{11} + f_{22}) - 2(f_{11} + f_{22}) = 0, \\ 
%%
%  &\sum_{i=1}^4 \nabla'^2 v(x')  [(z^i)',(z^i)'] ( (b^1)^1 + (b^1)^2 )_i \\ 
%  &~~= \frac{1}{4} \big( - (f_{11} + 2f_{12} + f_{22}) + (f_{11} - 2f_{12} + f_{22}) 
%    + (f_{11} + 2f_{12} + f_{22}) - (f_{11} - 2f_{12} + f_{22}) \big) 
%   = 0, \\ 
%%
%  &\sum_{i=1}^4 \nabla'^2 v(x')  [(z^i)',(z^i)'] ( (b^2)^1 + (b^2)^2 )_i \\ 
%  &~~= \frac{1}{4} \big( - (f_{11} + 2f_{12} + f_{22}) - (f_{11} - 2f_{12} + f_{22}) 
%    + (f_{11} + 2f_{12} + f_{22}) + (f_{11} - 2f_{12} + f_{22}) \big) 
%   = 0, \\ 
%% 
%  &\sum_{i=1}^4 \nabla'^2 v(x')  [(z^i)',(z^i)'] ( (b^3)^1 + (b^3)^2 )_i 
%   = 0. 
%\end{align*} 
%\end{tiny}
%
Thus combining with \eqref{eq:Gbar-affine}, for every $b \in \R^8$ we get 
\[ \bar{G}b = G Z b \] 
if $\nu_n \to \infty$ and 
\[ \bar{G}b 
   = P G Z b + \begin{pmatrix}
-\frac{1}{2(\nu-1)}\nabla'^2 v(x') Z^{\rm 2dim} (b^{(2)} - b^{(1)}) \\
\frac{1}{2(\nu-1)} \sum_{i=1}^8 \nabla'^2 v(x')  [(z^i)',(z^i)'] b_i 
  \end{pmatrix} 
  - \frac{1}{8(\nu-1)} \Delta v(x') \sum_{j=1}^8 b_j e_3. \]
if $\nu_n = \nu$ is constant. So $\bar{G} = GZ$ if $\nu_n \to \infty$ and 
\begin{align*} 
  \bar{G} 
  = P G Z &-\frac{1}{2(\nu-1)} \begin{pmatrix}
     \nabla'^2 v(x')  0 \\ 0 & 0 \end{pmatrix} Z_-   + \frac{1}{2(\nu-1)} e_3 \otimes ( \nabla'^2 v(x')  [(z^i)',(z^i)'] )_{i=1,\ldots,8} \\ 
  &\quad  - \frac{1}{8(\nu-1)} \Delta v(x')) e_3 \otimes (1, \ldots, 1). 
\end{align*} 
with $Z_-$ as in \eqref{eq:Zminus-def} if $\nu_n = \nu$ is constant. Noting that 
\[ \nabla'^2 v(x')  [(z^i)',(z^i)'] 
   = \begin{cases} 
        \frac{1}{4} (\partial_{11} v(x') + 2 \partial_{12} v(x') + \partial_{22} v(x')) & \text{if } i \in \{1,3,5,7\}, \\ 
        \frac{1}{4} (\partial_{11} v(x') - 2 \partial_{12} v(x') + \partial_{22} v(x')) & \text{if } i \in \{2,4,6,8\}, \\ 
     \end{cases} \] 
with $M$ as in \eqref{eq:M-def} this can be written as 
\begin{align*} 
  \bar{G} 
  = P G Z -\frac{1}{2(\nu-1)} \begin{pmatrix}
     \nabla'^2 v(x') & 0 \\ 0 & 0 \end{pmatrix} Z_- 
    + \frac{1}{2(\nu-1)} \partial_{12} v(x') M. 
\end{align*}
Last, we note that subsequences were indeed not necessary, as the limit is characterized uniquely.
\end{proof}

Having established convergence of the strain, the $\liminf$ inequality in Theorems~\ref{thm:Gammalimit}, \ref{thm:Gammalimit2} and \ref{thm:Gammalimit3} can now be shown by a careful Taylor expansion of $W(x, \cdot)$, cf.\ \cite{FJM:02,FJM:06,Sch:06}. 

\begin{proof}[Proof of the $\liminf$ inequality in Theorems~\ref{thm:Gammalimit}, \ref{thm:Gammalimit2} and \ref{thm:Gammalimit3}] 
 The $\liminf$ inequality in Theorem~\ref{thm:Gammalimit3} is an immediate consequence of the $\liminf$ inequality in Theorem~\ref{thm:Gammalimit} applied to a cell energy $W_{\rm cell}'$ of the form 
\[ W_{\rm cell}'(A) 
   = \begin{cases} 
        W_{\rm cell}(A), & \text{if } \dist(A, \SO(3) Z) < \delta, \\ 
        \dist^2(A, \SO(3) Z),  & \text{if } \dist(A, \SO(3) Z) \ge \delta. 
     \end{cases} \] 
Furthermore, in view of Proposition~\ref{prop:limiting-force} it suffices to establish the lower bound for $f_n = 0$.

Assume that $(y_n)$ is a sequence of atomistic deformations such that 
\[ \sup_n E_n (y_n) < \infty \] 
so that by Proposition~\ref{prop:energy-estimates} its modification and interpolation $(\tilde{y}_n)$ verifies the assertions of Theorem \ref{thm:contcomp}. Set 
\begin{equation*}
\bar{G}_n := \frac{1}{h_n^2} (R_n^T \bar{\nabla}_n \bar{y}_n - Z). 
\end{equation*}

By frame indifference and nonnegativity of the cell energy we have 
\begin{align*}
  &\frac{\eps_n^3}{h_n^5} E_n(y_n) 
  \ge \frac{\eps_n^3}{h_n^5} \sum_{ x \in (\tilde{\Lambda}_n')^{\circ}} W((x', h_n x_3), \bar{\nabla}_n \bar{y}_n(x))  \\ 
  &~~= \frac{1}{h_n^4} \int_{\Omega^{\rm in}_n} W \big(  \eps_n ( \lfloor \tfrac{x_1}{\eps_n} \rfloor + \tfrac{1}{2}, \lfloor \tfrac{x_2}{\eps_n} \rfloor + \tfrac{1}{2}, \lfloor \tfrac{h_n x_3}{\eps_n} \rfloor + \tfrac{1}{2}), Z + h_n^2 \bar{G}_n(x) \big) \, dx. 
\end{align*}

First assume that $\nu_n \to \infty$ as $n \to \infty$. Due to nonnegativity of $W_{\rm surf}$ we can estimate 
\begin{align*}
  \frac{\eps_n^3}{h_n^5} E_n(y_n) 
  &\ge \frac{1}{h_n^4} \int_{\Omega} \chi_{n}(x) W_{\rm cell} (Z + h_n^2 \bar{G}_n(x)) \, dx \\ 
  &= \int_{\Omega} \frac{1}{2} Q_{\rm cell} \big( \chi_{n}(x) \bar{G}_n(x) \big) 
     - h_n^{-4} \chi_n(x) \omega \big( | h_n^2 \bar{G}_n(x) | \big) \, dx, 
\end{align*}
where $\chi_n$ is the characteristic function of $\{ x \in \Omega^{\rm in}_n : \bar{G} \le h_n^{-1} \} \subset \Omega$ and 
\[ \omega(t) 
   := \sup \big\{ | \tfrac{1}{2} Q_{\rm cell}(F) - W_{\rm cell}(Z + F) | : F \in \R^{3 \times 8} \text{ with } |F| \le t \big\} \] 
so that $t^{-2} \omega(t) \to 0$ as $t \to 0$. Since $\bar{G}_n^2$ is bounded in $L^1(\Omega; \R^{3 \times 8})$ and $\chi_n (h_n^2 \bar{G}_n)^{-2} \omega(h_n^2 \bar{G}_n)$ converges to $0$ uniformly, 
\[ h_n^{-4} \chi_n \omega \big( h_n^2 \bar{G}_n \big) 
   = \bar{G}_n^2 \chi_n (h_n^2 \bar{G}_n)^{-2} \omega(h_n^2 \bar{G}_n)
   \to 0 \text{ in } L^1(\Omega; \R^{3 \times 8}). \] 
Moreover, $\chi_n \to 1$ boundedly in measure and so by Proposition \ref{prop:limiting-strain} $\chi_{n} \bar{G}_n \wto \bar{G} = GZ$, where $G$ satisfies \eqref{eq:cont-strain-identify-i} and \eqref{eq:cont-strain-identify-ii}. By lower semicontinuity it follows that 
\begin{align*}
  \liminf_{n \to \infty} \frac{\eps_n^3}{h_n^5} E_n(y_n) 
  &\ge \frac{1}{2} \int_{\Omega} Q_{\rm cell} \big( \bar{G}(x) \big) \, dx 
   \ge \frac{1}{2} \int_{\Omega} Q_{\rm cell}^{\rm rel} \big( \bar{G}(x) \big) \, dx \\ 
  &= \frac{1}{2} \int_{\Omega} Q_{\rm cell}^{\rm rel} \bigg( \begin{pmatrix} G_1(x') + (x_3 - \frac{1}{2}) G_2(x') & 0 \\ 0 & 0 \end{pmatrix} Z \bigg) \, dx. 
\end{align*}
Integrating the last expression over $x_3 \in (0,1)$ and noting that the integral of the cross terms vanish we obtain 
\begin{align*}
  \liminf_{n \to \infty} \frac{\eps_n^3}{h_n^5} E_n(y_n) 
  &\ge E_{\rm vK}(u,v). 
\end{align*}

Now suppose that $\nu_n \equiv \nu \in \N$. We let $\chi_n$ as above but now define 
\begin{align*}
   \omega(t) 
   &:= \sup \big\{ | \tfrac{1}{2} Q_{\rm cell}(F) - W_{\rm cell}(Z + F) | : 
           F \in \R^{3 \times 8} \text{ with } |F| \le t \big\} \\ 
   &\quad + 2 \sup \big\{ | \tfrac{1}{2} Q_{\rm surf}(F) - W_{\rm surf}(Z^{(1)} + F) | : 
           F \in \R^{3 \times 4} \text{ with } |F| \le t \big\}
\end{align*}
so that still $t^{-2} \omega(t) \to 0$ as $t \to 0$. With $\bar{G}(x) = (\bar{G}^{(1)}(x), \bar{G}^{(2)}(x))$ we have
\begin{align*}
  &\liminf_{n \to \infty} \frac{\eps_n^3}{h_n^5} E_n(y_n) 
  \ge \frac{1}{2} \int_{\Omega} Q_{\rm cell} \big( \bar{G}(x) \big) \, dx \\ 
  &\qquad+ \frac{1}{2(\nu-1)} \int_S Q_{\rm surf} \big( \bar{G}^{(1)}(x', \tfrac{1}{2(\nu-1)}) \big) + Q_{\rm surf} \big( \bar{G}^{(2)}(x', \tfrac{2\nu-3}{2\nu-2}) \big) \, dx, 
\end{align*}
where we have used that $\bar{G}$ is constant on $S \times (0,\frac{1}{\nu-1})$ and on $S \times (\frac{\nu-2}{\nu-1},1)$. Here (see Eq.~\eqref{eq:G3def} for $G_3$), 
\begin{align*}
  \bar{G}^{(1)}(x', \tfrac{1}{2\nu-2}) 
  &= \dashint_0^{\frac{1}{\nu-1}} G(x',x_3) \, dx_3 \, Z^{(1)} + \tfrac{1}{2(\nu-1)} G_3^{(1)}(x'), \\ 
  \bar{G}^{(2)}(x', \tfrac{2\nu-3}{2\nu-2}) 
  &= \dashint_{\frac{\nu-2}{\nu-1}}^1 G(x',x_3) \, dx_3 \, Z^{(2)} + \tfrac{1}{2(\nu-1)} G_3^{(2)}(x'). 
\end{align*}

The bulk part is estimated as 
\begin{align*}
  &\frac{1}{2} \int_{-\frac{1}{2}}^{\frac{1}{2}} Q_{\rm cell} \big( \bar{G}(x) \big) \, dx_3 \\ 
  &~~\ge \frac{1}{2(\nu-1)} \sum_{k = 1}^{\nu-1} Q_{\rm cell}^{\rm rel} 
       \bigg( \begin{pmatrix} \sym ( PG'' ) (x', \tfrac{2k - 1}{2\nu-2}) & 0 \\ 0 & 0 \end{pmatrix} Z + \tfrac{1}{2(\nu-1)} G_3(x') \bigg) \\ 
  &~~= \frac{1}{2(\nu-1)} \sum_{k = 1}^{\nu-1} Q_{\rm cell}^{\rm rel} 
       \bigg( \begin{pmatrix} \sym G_1(x') + \tfrac{2k - \nu}{2\nu-2} G_2(x') & 0 \\ 0 & 0 \end{pmatrix} Z + \tfrac{1}{2(\nu-1)} G_3(x') \bigg) \\ 
  &~~= \frac{1}{2(\nu-1)} \sum_{k = 1}^{\nu-1} \bigg[ Q_{\rm cell}^{\rm rel} 
       \bigg( \begin{pmatrix} \sym G_1(x') & 0 \\ 0 & 0 \end{pmatrix} Z + \tfrac{1}{2(\nu-1)} G_3(x') \bigg) \\ 
  &~~\qquad\qquad\qquad + \tfrac{(2k - \nu)^2}{(2\nu-2)^2}Q_{\rm cell}^{\rm rel} 
       \bigg( \begin{pmatrix} G_2(x') & 0 \\ 0 & 0 \end{pmatrix} Z\bigg) \bigg] \\ 
  &~~= \frac{1}{2} Q_{\rm cell}^{\rm rel} 
       \bigg( \begin{pmatrix} \sym G_1(x') & 0 \\ 0 & 0 \end{pmatrix} Z 
       + \tfrac{1}{2(\nu-1)} G_3(x') \bigg) + \tfrac{\nu(\nu-2)}{24(\nu-1)^2} Q_{\rm cell}^{\rm rel} 
       \bigg( \begin{pmatrix} G_2(x') & 0 \\ 0 & 0 \end{pmatrix} Z\bigg), 
\end{align*}
where we have used that $\sum_{k = 1}^{\nu-1}\tfrac{(2k - \nu)^2}{(2\nu-2)^3} = \frac{\nu(\nu-2)}{24(\nu-1)^2}$.

For the surface part first note that by \eqref{eq:Q-invariance}, for any $A = (a_{ij}) \in \R^{3 \times 3}$ and $B \in \R^{3 \times 4}$ we have 
\begin{align*}
  &Q_{\rm surf}(A Z^{(1)} + B) \\ 
  &~~= Q_{\rm surf} \big( A Z^{(1)} + B + (a_{3\cdot} \otimes e_3 - e_3 \otimes a_{3\cdot}) Z^{(1)} + (a_{\cdot 3} + a_{3 \cdot}) \otimes (1,1,1,1) \big) \\ 
  &~~= Q_{\rm surf} \bigg( \begin{pmatrix} A'' & 0 \\ 0 & 0 \end{pmatrix} Z^{(1)} + B \bigg) 
     = Q_{\rm surf} \bigg( \begin{pmatrix} \sym A'' & 0 \\ 0 & 0 \end{pmatrix} Z^{(1)} + B \bigg),  
\end{align*}
where $a_{\cdot 3}$ denotes the third column, $a_{3 \cdot}$ the third row and $A'' = (a_{ij})_{1 \le i,j \le 2}$ the upper left $2 \times 2$ part of $A$. Thus also 
\begin{align*}
  Q_{\rm surf}(A Z^{(2)} + B)  
  &= Q_{\rm surf} \big( A Z^{(1)} + a_{\cdot 3} \otimes (1,1,1,1) + B \big) \\ 
  &= Q_{\rm surf} \bigg( \begin{pmatrix} \sym A'' & 0 \\ 0 & 0 \end{pmatrix} Z^{(1)} + B \bigg).  
\end{align*}
It follows that 
\begin{align*}
  &Q_{\rm surf}(\bar{G}_1(x', \tfrac{1}{2\nu-2})) \\ 
  &~~= Q_{\rm surf} \bigg( \begin{pmatrix} \sym G_1(x') - \frac{\nu - 2}{2\nu - 2} G_2(x') & 0 \\ 0 & 0 \end{pmatrix} Z^{(1)} + \tfrac{1}{2(\nu-1)} G_3^{(1)}(x') \bigg) \\ 
  &~~= Q_{\rm surf} \bigg( \begin{pmatrix} \sym G_1(x') - \frac{1}{2} G_2(x') & 0 \\ 0 & 0 \end{pmatrix} Z^{(1)} + \frac{\partial_{12} v(x')}{4(\nu-1)} M^{(1)} \bigg), \\ 
  &Q_{\rm surf}(\bar{G}_2(x', \tfrac{2\nu - 3}{2\nu-2})) \\ 
  &~~= Q_{\rm surf} \bigg( \begin{pmatrix} \sym G_1(x') + \frac{\nu - 2}{2\nu - 2} G_2(x') & 0 \\ 0 & 0 \end{pmatrix} Z^{(1)} + \tfrac{1}{2(\nu-1)} G_3^{(2)}(x') \bigg) \\ 
  &~~= Q_{\rm surf} \bigg( \begin{pmatrix} \sym G_1(x') + \frac{1}{2} G_2(x') & 0 \\ 0 & 0 \end{pmatrix} Z^{(1)} + \frac{\partial_{12} v(x')}{4(\nu-1)} M^{(1)} \bigg) \bigg),   
\end{align*}
and so 
\begin{align*}
  &Q_{\rm surf}(\bar{G}_1(x', \tfrac{1}{2\nu-2})) + Q_{\rm surf}(\bar{G}_2(x', \tfrac{2\nu - 3}{2\nu-2})) \\ 
  &~~= 2 Q_{\rm surf} \bigg( \begin{pmatrix} \sym G_1(x') & 0 \\ 0 & 0 \end{pmatrix} Z^{(1)} 
          + \frac{\partial_{12} v(x')}{4(\nu-1)} M^{(1)}  \bigg) \\  
   &\qquad + \tfrac{1}{2} Q_{\rm surf} \bigg( \begin{pmatrix} G_2(x') & 0 \\ 0 & 0 \end{pmatrix} Z^{(1)}\bigg),   
\end{align*}

Adding bulk and surface contributions and integrating over $x'$ we arrive at 
\begin{align*}
  \liminf_{n \to \infty} \frac{\eps_n^3}{h_n^5} E_n(y_n) 
  &\ge \int_S \frac{1}{2} Q_{\rm cell}^{\rm rel} 
       \bigg( \begin{pmatrix} \sym G_1(x') & 0 \\ 0 & 0 \end{pmatrix} Z 
       + \tfrac{1}{2(\nu-1)} G_3(x') \bigg) \\ 
  &~~\qquad + \tfrac{\nu(\nu-2)}{24(\nu-1)^2} Q_{\rm cell}^{\rm rel} 
       \bigg( \begin{pmatrix} G_2(x') & 0 \\ 0 & 0 \end{pmatrix} Z\bigg) \\ 
  &~~\qquad + \tfrac{1}{\nu-1} Q_{\rm surf} \bigg( \begin{pmatrix} \sym G_1(x') & 0 \\ 0 & 0 \end{pmatrix} Z^{(1)} 
          + \frac{\partial_{12} v(x')}{4(\nu-1)} M^{(1)}  \bigg) \\ 
  &~~\qquad + \tfrac{1}{4(\nu-1)} Q_{\rm surf} \bigg( \begin{pmatrix} G_2(x') & 0 \\ 0 & 0 \end{pmatrix} Z^{(1)}\bigg) \, dx' \\ 
  &= E_{\rm vK}^{(\nu)}(u, v). \qedhere
\end{align*}
\end{proof}

\subsection{Upper bounds}

Without loss of generality we assume that $R^* = \Id$. (For general $R^*$ one just considers the sequence $R^* y_n$ with $y_n$ as in \eqref{eq:recovery-ansatz} and $R^*_n = R^*$ below. 

If $u : S \to \R^2$ and $v : S \to \R$ are smooth up to the boundary, we choose a smooth extension to a neighborhood of $S$ and define the lattice deformations $y_n : \tilde{\bar{\Lambda}}_n \to \R^3$ by restricting to $\tilde{\bar{\Lambda}}_n$ the mapping $y_n : \overline{\tilde{\Omega}_n^{\rm out}} \to \R^3$, defined by  
\begin{align}\label{eq:recovery-ansatz} 
  y_n(x) 
  = \begin{pmatrix} x' \\ h_n x_3 \end{pmatrix} 
     + \begin{pmatrix} h_n^2 u(x') \\ h_n v(x') \end{pmatrix} 
     - h_n^2 (x_3 - \tfrac{1}{2}) \begin{pmatrix} (\nabla' v(x'))^T \\ 0 \end{pmatrix} 
     + h_n^3 d(x', x_3) 
\end{align}
for all $x \in \overline{\tilde{\Omega}_n^{\rm out}}$. Here  $d : \overline{\tilde{\Omega}_n^{\rm out}} \to \R^3$ will be determined later, see \eqref{eq:d-choose-thick} and \eqref{eq:d-choose-thin} for films with many, respectively, a bounded number of layers. In both cases, $d$ is smooth and bounded in $W^{1,\infty}(\overline{\tilde{\Omega}_n^{\rm out}}; \R^3)$ uniformly in $n$. 

We let $R^*_n = \Id$ and $c_n = 0$ for all $n$ and define $\tilde{y}_n \in W^{1,2}(\tilde{\Omega}_n^{\rm out}; \R^3)$ as in \eqref{eq:yn-tilde-def} by interpolating as in Section~\ref{sec:preparations} (more precisely, descaling to $w_n$ and then interpolating and rescaling) to obtain $\tilde{y}_n=\dtilde{y}_n$. Analogously we let $\bar{y} = \dbar{y}$. We define $u_n$ and $v_n$ as in \eqref{eq:un-def} and \eqref{eq:vn-def}, respectively. It is straightforward to check that indeed $u_n \to u$ in $W^{1,2}(S; \R^2)$ and $v_n \to v$ in $W^{1,2}(S)$.

In order to estimate the energy of $y_n$ we need to compute its discrete gradient. Instead of directly calculating $\bar{\nabla} \bar{y}_n = (\bar{\partial}_1 \bar{y}_n, \ldots, \bar{\partial}_8 \bar{y}_n)$ it is more convenient to first determine $\bar{D} y_n = (\bar{D}_1 y_n, \ldots, \bar{D}_8 y_n)$ which for each $x \in \tilde{\Lambda}'_n - (\frac{\eps_n}{2}, \frac{\eps_n}{2}, \frac{\eps_n}{2 h_n})$ is defined by 
\begin{align*} 
  \bar{D}_i y_n(x) 
  &= \frac{1}{\eps_n} \big[ y_n \big( \hat{x} + \eps_n ( (a^i)', h_n^{-1} a^i_3) \big) - y_n (\hat{x}) \big], 
\end{align*}
where for $x \in \tilde{\Omega}_n^{\rm out}$ we have set 
\[ \hat{x} 
   = \big( \eps_n \lfloor \tfrac{x_1}{\eps_n} \rfloor, \eps_n \lfloor \tfrac{x_2}{\eps_n} \rfloor, \tfrac{\lfloor (\nu_n-1) x_3 \rfloor}{\nu_n-1} \big), \] 
so that $\tilde{Q}_n(x) = \hat{x} + (0,\eps_n)^2 \times (0, (\nu_n-1))$. We set  $a^i = \frac{1}{2}(1,1,1)^T + z^i \in \{0,1\}^3$ and write $A := (a^1, \ldots, a^8) = Z + \frac{1}{2}(1,1,1)^T \otimes (1, 1, 1, 1, 1, 1, 1, 1)^T$. Note that 
\begin{align}\label{eq:D-Nabla-Umrechnung} 
  \bar{D}_i y_n(x)
  = \bar{\partial}_i \bar{y}_n(x) - \bar{\partial}_1 \bar{y}_n(x) 
  \quad\mbox{and}\quad 
  \bar{\partial}_i \bar{y}_n(x) 
  = \bar{D}_i y_n(x) - \frac{1}{8} \sum_{j=1}^8 \bar{D}_j y_n(x). 
\end{align}
In particular, if $\bar{D} y_n(x)$ is affine, i.e., $\bar{D} y_n(x) = F A$ for some $F \in \R^{3 \times 3}$, then 
\begin{align}\label{eq:affine-Umrechnung} 
  \bar{\partial}_i \bar{y}_n(x) 
  &= F a^i - \frac{1}{8} \sum_{j=1}^8 F a^j 
   = F \Big( a^i - \frac{1}{2}(1,1,1)^T \Big) = F z^i
\end{align} 
and so $\bar{\nabla} \bar{y}_n(x) = F Z$.  

For $x$ in a fixed cell $\tilde{Q}_n(x) = \hat{x} + (0,\eps_n)^2 \times (0, (\nu_n-1))$, Taylor expansion of $y_n$ (restricted to $\overline{\tilde{Q}}_n(x)$) yields  
\begin{align*} 
  \bar{D}_i y_n(x) 
  &= \nabla' y_n(\hat{x}) (a^i)' + h_n^{-1} \partial_3 y_n(\hat{x}) a^i_3 + \frac{\eps_n}{2} (\nabla')^2 y_n(\hat{x}) [(a^i)', (a^i)'] \\ 
  &\qquad + \eps_n h_n^{-1} \sum_{j=1}^2 \partial_{j3} y_n(\hat{x}) a^i_j a^i_3 + \frac{\eps_n h_n^{-2}}{2} \partial_{33} y_n(\hat{x}) (a^i_3)^2 \\ 
  &\qquad + \frac{\eps_n^{2}}{6} \nabla^3 \big( (y_n)_1 (\zeta^1_{\eps_n}), (y_n)_2 (\zeta^2_{\eps_n}), (y_n)_2 (\zeta^2_{\eps_n}) \big)^T \\ 
  &\qquad \qquad \qquad [((a^i)', h_n^{-1} a^i_3), ((a^i)', h_n^{-1} a^i_3), ((a^i)', h_n^{-1} a^i_3)]  
\end{align*} 
for some $\zeta_{\eps_n} \in \hat{x} + [0,\eps_n]^2\times[0, \eps_n h_n^{-1}]$. Plugging in \eqref{eq:recovery-ansatz} we get  
\begin{align*} 
  \bar{D}_i y_n(x) 
  &= \bigg( \begin{pmatrix} \Id_{2\times2} \\ 0 \end{pmatrix} 
            + \begin{pmatrix} h_n^2 \nabla' u(\hat{x}') \\ h_n \nabla' v(\hat{x}') \end{pmatrix} \\ 
  &\qquad   - h_n^2 (\hat{x}_3 - \tfrac{1}{2}) \begin{pmatrix} \nabla' (\nabla' v(\hat{x}'))^T  \\ 0 \end{pmatrix} 
            + h_n^3 \nabla' d(\hat{x}) \bigg) (a^i)' \\ 
  &\qquad + h_n^{-1} \left( \begin{pmatrix} 0 \\ h_n \end{pmatrix} 
            + 0 - h_n^2 \begin{pmatrix} (\nabla' v(\hat{x}'))^T \\ 0 \end{pmatrix} 
            + h_n^3 \partial_3 d(\hat{x}) \right) a^i_3 \\ 
  &\qquad + \frac{\eps_n h_n}{2} \begin{pmatrix} 0 \\ (\nabla')^2 v(\hat{x}') [(a^i)', (a^i)'] \end{pmatrix} + O(\eps_n h_n^2) \\ 
  &\qquad - \eps_n h_n \begin{pmatrix} \nabla' (\nabla' v(\hat{x}'))^T \\ 0 \end{pmatrix} (a^i)' a^i_3 
          + O(\eps_n h_n^2) \\ 
  &\qquad + \frac{\eps_n h_n}{2} \partial_{33} d(\hat{x}) (a^i_3)^2 \\ 
  &\qquad + \frac{\eps_n^2}{6} \partial_{333} \big( d_1(\zeta^1_{\eps_n}), d_2(\zeta^2_{\eps_n}), d_3(\zeta^3_{\eps_n}) \big)^T (a^i_3)^3 
          + O(\eps_n^2 h_n). 
\end{align*}

It follows that 
\begin{align*} 
  \bar{D}_i y_n(x) 
  &= \bigg( \Id_{3\times3} + h_n \begin{pmatrix} h_n \nabla' u(\hat{x}') & - (\nabla' v(\hat{x}'))^T \\ \nabla' v(\hat{x}') & 0 \end{pmatrix} \\ 
    &\qquad - h_n^2 (\hat{x}_3 - \tfrac{1}{2}) \begin{pmatrix} (\nabla')^2 v(\hat{x}') & 0 \\ 0 & 0 \end{pmatrix} 
     + h_n^2 \begin{pmatrix} 0_{3\times2} & \partial_3 d(\hat{x}) \end{pmatrix}  \bigg) a^i \\ 
    &\qquad + \eps_n h_n \left( \begin{pmatrix} - (\nabla')^2 v(\hat{x}') (a^i)' a^i_3 \\ \frac{1}{2} (\nabla')^2 v(\hat{x}') [(a^i)', (a^i)'] \end{pmatrix} + \tfrac{1}{2} \partial_{33} d(\hat{x}) (a^i_3)^2 \right) \\ 
    &\qquad + \frac{\eps_n^2}{6} \partial_{333} \big( d_1(\zeta^1_{\eps_n}), d_2(\zeta^2_{\eps_n}), d_3(\zeta^3_{\eps_n}) \big)^T (a^i_3)^3 
     + O(\eps_n h_n^2 + \eps_n^2 h_n). 
\end{align*}

We define the skew symmetric matrix $B(\hat{x}) = B_n(\hat{x})$ by  
\begin{align*} 
  B(\hat{x}) 
  &= \begin{pmatrix} \frac{h_n^2}{2} ( \nabla' u(\hat{x}') - ( \nabla' u(\hat{x}'))^T ) & - h_n (\nabla' v(\hat{x}'))^T \\ h_n \nabla' v(\hat{x}) & 0 \end{pmatrix} \\ 
  &\qquad + \frac{h_n^2}{2} \begin{pmatrix} 0_{2\times2} & \partial_3 d'(\hat{x}) \\ - (\partial_3 d'(\hat{x}))^T & 0 \end{pmatrix}, 
\end{align*} 
where we have written $d' = (d_1, d_2)^T$ for $d = (d_1, d_2, d_3)^T$, and consider the special orthogonal matrix 
\begin{align*} 
  \e^{-B(\hat{x})}  
  &= \Id_{3\times3} - B(\hat{x}) + \frac{1}{2} B^2(\hat{x}) + O(|B(\hat{x})|^3) \\ 
%  &= \Id_{3\times3} - B(\hat{x}) 
%     - \frac{h_n^2}{2} \begin{pmatrix} \nabla' v(\hat{x}') \otimes \nabla' v(\hat{x}') & 0 \\ 
%                                       0 & |\nabla' v(\hat{x}')|^2 \end{pmatrix}  + O(|h_n|^3) \\ 
  &= \Id_{3\times3} - h_n \begin{pmatrix} 0_{2\times2} & - (\nabla' v(\hat{x}'))^T \\ \nabla' v(\hat{x}') & 0 \end{pmatrix} \\ 
  &\qquad - \frac{h_n^2}{2} \begin{pmatrix} \nabla' u(\hat{x}') - ( \nabla' u(\hat{x}'))^T + \nabla' v(\hat{x}') \otimes \nabla' v(\hat{x}') & \partial_3 d'(\hat{x}) \\ - (\partial_3 d'(\hat{x}))^T & |\nabla' v(\hat{x}')|^2 \end{pmatrix} \\ 
  &\qquad + O(|h_n|^3).  
\end{align*} 
Now compute 
\begin{align}\label{eq:expBDy} 
\begin{split}
  &\e^{-B(\hat{x})} \bar{D}_i y_n(x) 
  = \bar{D}_i y_n(x) \\ 
  &\qquad   - h_n \begin{pmatrix} 0_{2\times2} & - (\nabla' v(\hat{x}'))^T \\ \nabla' v(\hat{x}') & 0 \end{pmatrix} 
       \bigg( \Id_{3\times3} 
         + h_n \begin{pmatrix} 0_{2\times2} & - (\nabla' v(\hat{x}'))^T \\ \nabla' v(\hat{x}') & 0 \end{pmatrix} \bigg) a^i \\ 
  &\qquad - \frac{h_n^2}{2} \begin{pmatrix} \nabla' u(\hat{x}') - ( \nabla' u(\hat{x}'))^T + \nabla' v(\hat{x}') \otimes \nabla' v(\hat{x}') & \partial_3 d'(\hat{x}) \\ - (\partial_3 d'(\hat{x}))^T & |\nabla' v(\hat{x}')|^2 \end{pmatrix} a^i \\ 
  &\qquad + O(h_n^3 + \eps_n h_n^2 + \eps_n^2 h_n) 
\end{split}\notag \\ 
\begin{split}
  &= \bigg( \Id_{3\times3} + h_n^2 \begin{pmatrix} {\rm sym} \nabla' u(\hat{x}') + \frac{1}{2} \nabla' v(\hat{x}') \otimes \nabla' v(\hat{x}')  & 0 \\ 0 & \frac{1}{2} |\nabla' v(\hat{x}')|^2 \end{pmatrix} \\ 
    &\qquad - h_n^2 (\hat{x}_3 - \tfrac{1}{2}) \begin{pmatrix} (\nabla')^2 v(\hat{x}') & 0 \\ 0 & 0 \end{pmatrix} 
     + h_n^2 \begin{pmatrix} 0_{2\times2} & \frac{1}{2} \partial_3 d'(\hat{x}) \\ \frac{1}{2} (\partial_3 d'(\hat{x}))^T & \partial_3 d_3(\hat{x}) \end{pmatrix}  \bigg) a^i \\
    &\qquad + \eps_n h_n \left( \begin{pmatrix} - (\nabla')^2 v(\hat{x}') (a^i)' a^i_3 \\ \frac{1}{2} (\nabla')^2 v(\hat{x}') [(a^i)', (a^i)'] \end{pmatrix} + \tfrac{1}{2} \partial_{33} d(\hat{x}) (a^i_3)^2 \right) \\ 
    &\qquad + \frac{\eps_n^2}{6} \partial_{333} \big( d_1(\zeta^1_{\eps_n}), d_2(\zeta^2_{\eps_n}), d_3(\zeta^3_{\eps_n}) \big)^T (a^i_3)^3 
     + O(h_n^3 + \eps_n h_n^2 + \eps_n^2 h_n). 
\end{split}
\end{align} 
Here, the error term is uniform in $\hat{x}$. 

We can now conclude the proof of Theorems~\ref{thm:Gammalimit}, \ref{thm:Gammalimit2} and \ref{thm:Gammalimit3}.

\begin{proof}[ Proof of the $\limsup$ inequality in Theorems~\ref{thm:Gammalimit}, \ref{thm:Gammalimit2} and \ref{thm:Gammalimit3}] 

As the discrete gradient $\bar{\nabla}_n \bar{y}_n$ is uniformly close to $\SO(3) Z$, the following arguments apply to show that $y_n$ defined by \eqref{eq:recovery-ansatz} serves as a recovery sequence in all three theorems. Moreover, in view of Proposition~\ref{prop:limiting-force} it suffices to construct recovery sequences for $f_n = 0$. 
 
We first specialize now to the case $\nu_n \to \infty$. For 
\begin{align}\label{eq:recov-G-def-thick} 
\begin{split}
  G(x) 
  &= G_1(x') + (x_3 - \tfrac{1}{2}) G_2(x') \\ 
  &= {\rm sym} \nabla' u(x') + \tfrac{1}{2} \nabla' v(x') \otimes \nabla' v(x') - (x_3 - \tfrac{1}{2}) (\nabla')^2 v(x'). 
\end{split}
\end{align}
choosing $d(x) = x_3 d_0(x') + \frac{x_3^2 - x_3}{2} d_1(x')$ with 
\begin{align}\label{eq:d-choose-thick} 
\begin{split}
  d_0(x') 
  &= \argmin_{b \in \R^3} Q_{\rm cell} \bigg[ \begin{pmatrix} G_1( x') & 0 \\ 0 &  \tfrac{1}{2} |\nabla' v(x')|^2 \end{pmatrix} Z + (b \otimes e_3) Z\bigg], \\ 
  d_1(x') 
  &= \argmin_{b \in \R^3} Q_{\rm cell} \bigg[ \begin{pmatrix} G_2(x') & 0 \\ 0 & 0 \end{pmatrix} Z + (b \otimes e_3) Z\bigg] 
\end{split}
\end{align} 
according to \eqref{eq:bmin-Q3}, from \eqref{eq:affine-Umrechnung} and \eqref{eq:expBDy}  we obtain 
\begin{align*} 
  \e^{-B(\hat{x})} \bar{\nabla} \bar{y}_n(x) %\\ 
%  = \Id_{3\times3} + h_n^2 \begin{pmatrix} G(\hat{x}) & - \frac{1}{2}\partial_3 d'(\hat{x})  \\ 
%    \frac{1}{2}(\partial_3 d'(\hat{x}))^T & \frac{1}{2} |\nabla' v(\hat{x})|^2 \end{pmatrix},  
  &= \bigg( \Id_{3\times3} + h_n^2 \begin{pmatrix} G(\hat{x}) & 0 \\ 0 & \frac{1}{2} |\nabla' v(\hat{x}')|^2 \end{pmatrix} \\ 
  &\qquad + h_n^2 {\rm sym} \big( ( d_0(\hat{x}) + (\hat{x}_3 - \tfrac{1}{2}) d_1(\hat{x}) ) \otimes e_3 \big) \bigg) Z 
     + O(h_n^3 + \eps_n h_n) 
\end{align*} 
and, Taylor expanding $W_{\rm cell}$, we see that due to the smoothness of $u$ and $v$ the piecewise constant mappings $x \mapsto h_n^{-4} W_{\rm cell} (\bar{\nabla}\bar{y}_n(x)) = h_n^{-4} W_{\rm cell} (e^{-B(\hat{x})} \bar{\nabla} \bar{y}_n(x))$ converge uniformly to 
\begin{align*} 
  \frac{1}{2} Q_{\rm cell}^{\rm rel} \bigg( \begin{pmatrix} G & 0 \\ 0 & 0 \end{pmatrix} Z \bigg) 
  = \frac{1}{2} Q_2(G). 
\end{align*} 
This shows that 
\begin{align*}
  \lim_{n \to \infty} h_n^{-4} E_n(y_n) 
  &= \frac{1}{2} \int_S Q_2(G(x)) \, dx \\ 
  &= \int_S \frac{1}{2} Q_2(G_1(x')) + \frac{1}{24} Q_2(G_2(x')) \, dx' 
  = E_{\rm vK}(u,v)    
\end{align*}
and thus finishes the proof in case $\nu_n \to \infty$. 
\medskip 

Now suppose that $\frac{\eps_n}{h_n} \equiv \frac{1}{\nu-1}$. Abbreviating $(\nabla')^2 v(\hat{x}') = -G_2(\hat{x}') = -G_2 = (f_{ij}) \in \R^{2 \times 2}$, we observe that 
\begin{align*} 
  &\begin{pmatrix}  2G_2 (a^i)' a^i_3 \\ -(a^i)'^T G_2 (a^i)' \end{pmatrix}_{i = 1, \ldots, 8} \\  
  &~~= \begin{pmatrix} 0 & 0 & 0 & 0 & 0 & -2f_{11} & -2f_{11} - 2f_{12} & -2f_{12} \\ 
                    0 & 0 & 0 & 0 & 0 & -2f_{21} & -2f_{21} - 2f_{22} & -2f_{22} \\ 
                    0 & f_{11} & \sum_{\mu,\nu} f_{\mu\nu} & f_{22} & 0 & f_{11} & \sum_{\mu,\nu} f_{\mu\nu} & f_{22}\end{pmatrix}, 
\end{align*} 
and hence, with $b = b(\hat{x}') = \big( (\partial_{11}+\partial_{12}) v(\hat{x}'), (\partial_{21}+\partial_{22}) v(\hat{x}'), 0 \big)^T = (f_{11} + f_{12}, f_{21} + f_{22}, 0)^T$, 
\begin{align*} 
  &\begin{pmatrix} 2G_2 (a^i)' a^i_3 \\ -(a^i)'^T G_2 (a^i)' \end{pmatrix}_{i = 1, \ldots, 8} 
  - (e_3 \otimes b - b \otimes e_3) A \\
  &\qquad = \begin{pmatrix} 0 & 0 & 0 & 0 & f_{11} + f_{12} & -f_{11} + f_{12} & -f_{11} - f_{12} & +f_{11} - f_{12} \\ 
                    0 & 0 & 0 & 0 & f_{21} + f_{22} & -f_{21} + f_{22} & -f_{21} - f_{22} & f_{21} - f_{22} \\ 
                    0 & -f_{12} & 0 & -f_{21} & 0 & -f_{12} & 0 & -f_{21}\end{pmatrix}, \\ 
  &\qquad = \begin{pmatrix} G_2 & 0 \\ 0 & 0 \end{pmatrix} ( Z + Z_-) 
             + \frac{1}{2} f_{12} \big( 2 M - e_3 \otimes (1, \ldots, 1) \big) \\ 
  &\qquad = \begin{pmatrix} G_2 & 0 \\ 0 & 0 \end{pmatrix} A - \frac{1}{2} b \otimes (1, \ldots, 1) 
            + \begin{pmatrix} G_2 & 0 \\ 0 & 0 \end{pmatrix} Z_- + \frac{f_{12}}{2} \big( 2 M - e_3 \otimes (1, \ldots, 1) \big). 
\end{align*} 
This shows that 
\begin{align*} 
  &\begin{pmatrix} -(\nabla')^2 v(\hat{x}') (a^i)' a^i_3 \\ \frac{1}{2} (\nabla')^2 v(\hat{x}') [(a^i)', (a^i)'] \end{pmatrix}_{i = 1, \ldots, 8} \\ 
  &~~= \frac{1}{2} \bigg( e_3 \otimes b - b \otimes e_3 + \begin{pmatrix} G_2 & 0 \\ 0 & 0 \end{pmatrix} \bigg) A 
   - \frac{1}{4} ( b + e_3 ) \otimes (1, \ldots, 1) \\ 
  &\qquad     + \frac{1}{2} \begin{pmatrix} G_2 & 0 \\ 0 & 0 \end{pmatrix} Z_- + \frac{1}{2} f_{12} M. 
\end{align*} 

We define the affine part of the strain $G(x) = G_1(x') + (x_3 - \tfrac{1}{2}) G_2(x')$ as in \eqref{eq:recov-G-def-thick}. The non-affine part is abbreviated by $\frac{1}{2(\nu - 1)} G_3(x')$ as in \eqref{eq:G3def}. Then using \eqref{eq:expBDy} we can write 
\begin{align*} 
  &\e^{-B(\hat{x})} \bar{\nabla} \bar{y}_n(x) \\  
  &= \bigg[ \Id_{3\times3} + h_n^2 \begin{pmatrix} G(\hat{x}',\hat{x}_3 + \tfrac{1}{2(\nu-1)}) & 0 \\ 0 & \frac{1}{2} |\nabla' v(\hat{x}')|^2 \end{pmatrix} %\\ 
     + h_n^2 {\rm sym} \big( \partial_3 d(\hat{x}) ) \otimes e_3 \big) \\ 
  &\qquad 
     + \frac{h_n^2}{2(\nu - 1)} \big( e_3 \otimes b(\hat{x}') - b(\hat{x}') \otimes e_3 \big) \bigg] Z 
     + \frac{h_n^2}{2(\nu - 1)}G_3(\hat{x}')  + O(h_n^3) \\ 
  &\qquad + \Big[ \frac{\eps_n h_n}{2} \partial_{33} d(\hat{x}) 
          + \frac{\eps_n^2}{6} \partial_{333} \big( d_1(\zeta^1_{\eps_n}), d_2(\zeta^2_{\eps_n}), d_3(\zeta^3_{\eps_n}) \big)^T \Big] \otimes (z^1_3, \ldots, z^8_3),   
\end{align*} 
where we have used \eqref{eq:affine-Umrechnung} and \eqref{eq:D-Nabla-Umrechnung}. 

We set 
\begin{align*} 
  d_0(x') 
  &= \argmin_{d \in \R^3} Q_{\rm cell} \bigg[ \begin{pmatrix} G_1(x') & 0 \\ 0 &  \tfrac{1}{2} |\nabla' v(x')|^2 \end{pmatrix} Z + {\rm sym} (d \otimes e_3) Z \\ 
  &\qquad \qquad \qquad \qquad \qquad \qquad \qquad \qquad \qquad + \frac{1}{2(\nu - 1)} G_3(x') \bigg], \\ 
  d_1(x') 
  &= \argmin_{d \in \R^3} Q_{\rm cell} \bigg[ \begin{pmatrix} G_2(x') & 0 \\ 0 & 0 \end{pmatrix} Z + {\rm sym} (d \otimes e_3) Z\bigg] 
\end{align*} 
according to \eqref{eq:bmin-Q3} and define $d : S' \times [0,1] \to \R$, $S'$ a neighborhood of $S$, inductively by $d(x,0) = 0$ and  
\begin{align}\label{eq:d-choose-thin} 
\begin{split} 
  d(x', \tfrac{j-1}{\nu-1} + t) 
  = d(x', \tfrac{j-1}{\nu-1}) + t d_0(x') + t \tfrac{2j - \nu}{2(\nu-1)} d_1(x') 
  \quad \mbox{if} \quad t \in [\tfrac{j-1}{\nu-1}, \tfrac{j}{\nu-1}], 
\end{split}
\end{align}
for $j = 1, \ldots, \nu-1$. Then $d$ is smooth in $x'$ and piecewise linear in $x_3$, more precisely, affine in $x_3$ in between two atomic layers: On $S' \times [\frac{j-1}{\nu-1}, \frac{j}{\nu-1}]$, $j \in \{ 1, \ldots, \nu-1 \}$, it satisfies 
\[ \partial_3 d(x) 
   = d_0(x') + \tfrac{2j-\nu}{2(\nu-1)} d_1(x') 
   = d_0(x') + (\hat{x}_3 - \tfrac{1}{2} + \tfrac{1}{2(\nu-1)}) d_1(x') \] 
since $\hat{x}_3 = \hat{x}_3(x) = \tfrac{j-1}{\nu-1}$. Taylor expanding $W_{\rm cell}$, we see that the piecewise constant mappings $x \mapsto h_n^{-4} W_{\rm cell} (\bar{\nabla} \bar{y}_n(x)) = h_n^{-4} W_{\rm cell} (e^{-B(\hat{x})} \bar{\nabla} \bar{y}_n(x))$ converge uniformly on $S' \times [\frac{j-1}{\nu-1}, \frac{j}{\nu-1}]$ to 
\begin{align*} 
  \frac{1}{2} Q_{\rm cell}^{\rm rel} \bigg( \begin{pmatrix} G_1(x') + \tfrac{2j-\nu}{2(\nu-1)} G_2(x') & 0 \\ 0 & 0 \end{pmatrix} Z 
   + \frac{1}{2(\nu - 1)} G_3(x')  \bigg) 
\end{align*} 
for each $j \in \{ 1, \ldots, \nu-1 \}$. Since $\tfrac{1}{\nu-1} \sum_{j = 1}^{\nu-1} \tfrac{2j-\nu}{2(\nu-1)} = 0$ and $\tfrac{1}{\nu-1} \sum_{j = 1}^{\nu-1} \big( \tfrac{2j-\nu}{2(\nu-1)} \big)^2 = \tfrac{\nu(\nu-2)}{12(\nu-1)^2}$, this shows  
\begin{align}\label{eq:bulk-contr} 
\begin{split}
  \frac{1}{h_n^{4}} \int_{\tilde{\Omega}_n^{\rm out}} W_{\rm cell} (\bar{\nabla}\bar{y}_n(x)) \, dx 
  &\to \int_S \frac{1}{2} Q_{\rm cell}^{\rm rel} \bigg( \begin{pmatrix} G_1(x') & 0 \\ 0 & 0 \end{pmatrix} Z 
   + \frac{1}{2(\nu - 1)} G_3(x') \bigg) \\ 
  &\qquad \qquad + \frac{\nu(\nu-2)}{24(\nu-1)^2} Q_{\rm cell}^{\rm rel} \bigg( \begin{pmatrix} G_2(x') & 0 \\ 0 & 0 \end{pmatrix} Z \bigg) \, dx'.  
\end{split}
\end{align} 

For the surface part we write $\bar{\nabla}\bar{y}_n = ([\bar{\nabla}\bar{y}_n]^{(1)}, [\bar{\nabla}\bar{y}_n]^{(2)})$ and use that the piecewise constant mappings $S' \times [0, \frac{1}{\nu-1}] \to \R$, \[x \mapsto h_n^{-4} W_{\rm surf} ([\bar{\nabla}\bar{y}_n(x)]^{(1)}) = h_n^{-4} W_{\rm surf} ([e^{-B(\hat{x})} \bar{\nabla}\bar{y}_n(x)]^{(1)}),\] converge uniformly to 
\begin{align*} 
  &\frac{1}{2} Q_{\rm surf} \bigg( \begin{pmatrix} G_1(x') - \tfrac{\nu - 2}{2(\nu-1)} G_2(x') & 0 \\ 0 & 0 \end{pmatrix} Z 
   + \frac{1}{2(\nu - 1)} G_3(x')  \bigg) \\
  &~~= \frac{1}{2} Q_{\rm surf} \bigg( \begin{pmatrix} \sym G_1(x') - \frac{1}{2} G_2(x) & 0 \\ 0 & 0 \end{pmatrix} Z^{(1)} + \frac{\partial_{12} v(x')}{4(\nu-1)} M^{(1)} \bigg).  
\end{align*} 
Similarly, the mappings $S' \times [\frac{\nu-2}{\nu-1}, 1] \to \R$, \[x \mapsto h_n^{-4} W_{\rm surf} ([\bar{\nabla}\bar{y}_n(x)]^{(2)}) = h_n^{-4} W_{\rm surf} ([e^{-B(\hat{x})} \bar{\nabla}\bar{y}_n(x)]^{(2)}),\] converge uniformly to 
\begin{align*} 
  \frac{1}{2} Q_{\rm surf} \bigg( \begin{pmatrix} \sym G_1(x') + \frac{1}{2} G_2(x) & 0 \\ 0 & 0 \end{pmatrix} Z^{(1)} + \frac{\partial_{12} v(x')}{4(\nu-1)} M^{(1)} \bigg). 
\end{align*} 
So with $S^{\rm out}_n$ such that $\tilde{\Omega}^{\rm out}_n = S^{\rm out}_n \times (0,1)$,  
\begin{align}\label{eq:surf-contr}
\begin{split} 
  &\frac{1}{h_n^{4}(\nu-1)} \int_{ S^{\rm out}_n} W_{\rm surf} \big( [\bar{\nabla}\bar{y}_n(x',\tfrac{1}{2(\nu-1)})]^{(1)} \big) + W_{\rm surf} \big( [\bar{\nabla}\bar{y}_n(x',\tfrac{2\nu - 3}{2(\nu-1)})]^{(2)} \big) \, dx' \\  
  &~~\to \int_S \tfrac{1}{\nu-1}Q_{\rm surf} \bigg( \begin{pmatrix} \sym G_1(x') & 0 \\ 0 & 0 \end{pmatrix} Z^{(1)} + \frac{\partial_{12} v(x')}{4(\nu-1)} M^{(1)} \bigg) \\ 
  &\qquad\qquad\qquad\qquad\qquad + \frac{1}{4(\nu-1)}Q_{\rm surf} \bigg( \begin{pmatrix} G_2(x) & 0 \\ 0 & 0 \end{pmatrix} Z^{(1)} \bigg) dx'. 
\end{split} 
\end{align} 

Summarizing \eqref{eq:surf-contr} and \eqref{eq:bulk-contr}, we have shown that 
\begin{align*} 
  \lim_{n \to \infty} h_n^{-4} E_n(y_n) 
  = \lim_{n \to \infty} \eps_n^3 h_n^{-5} \sum_{x \in \tilde{\Lambda}_n'} W \big( x, \bar{\nabla}y_n(x) \big) 
  = E_{\rm vK}^{(\nu)} (u, v) 
\end{align*} 
as $n \to \infty$, where we have also used that the contribution of the lateral boundary cells $\eps_n^3 h_n^{-5} \sum_{x \in \partial \tilde{\Lambda}_n'} W(x, \bar{\nabla}y_n(x))$ is negligible in the limit $n \to \infty$. 
\end{proof} 

\begin{proof}[Proof of the energy barrier in Theorem~\ref{thm:Gammalimit3}]
If a sequence of $w_n\in \mathcal{S}_{\delta}$ satisfies $E_n(w_n) \leq C h_n^4$, then the proof of Proposition~\ref{prop:energy-estimates} shows $\frac{\eps_n^3}{h_n} E_{\rm atom} (w_n) \leq C h_n^4$. Hence,
\[ \dist^2(\bar{\nabla} w_n(x),\SO(3)Z) 
   \le C E_{\rm atom} (w_n) 
   \le C h_n^5 \eps_n^{-3} 
   = C (\nu_n-1)^5 \eps_n^2, \]
which tends to $0$ by assumption. This implies that  $w_n \in \mathcal{S}_{\delta/2}$ for $n$ large enough.
\end{proof}

%-------------------------------------------------------------------

\section*{Acknowledgments.}

This work was supported by the Deutsche Forschungsgemeinschaft (DFG, German Research Foundation) under project number 285722765, as well as the Engineering and Physical Sciences Research Council (EPSRC) under the grant EP/R043612/1.

\bibliographystyle{alpha} %includes doi
\bibliography{papers}

\end{document}